\documentclass[10pt,english,reqno]{amsart} 
\textwidth=14.5cm \oddsidemargin=1cm
\evensidemargin=1cm


\usepackage[mathscr]{euscript}
\newcommand{\fr}{\mathfrak}
\newcommand{\cal}{\mathscr}

\newcommand{\op}{\operatorname}

\newcommand{\xysmall}{\xymatrix@C=1.5em@R=1.5em}

\newcommand{\Spec}{\mathrm{Spec}}

\newcommand{\dR}{\mathrm{dR}}
\newcommand{\Hom}{\mathrm{Hom}}

\newcommand{\QCoh}{\mathrm{QCoh}}
\newcommand{\Coh}{\mathrm{Coh}}

\newcommand{\Pic}{\mathrm{Pic}}

\newcommand{\et}{\textnormal{\'et}}


\newcommand{\Mod}{\textnormal{-}\mathrm{Mod}}
\newcommand{\Shv}{\textnormal{-}\mathrm{Shv}}

\newcommand{\PD}{\textnormal{-}\mathrm{PD}}


\newtheorem*{thm*}{Theorem}
\newtheorem{thm}{Theorem}[section]

\newtheorem{prop}[thm]{Proposition}
\newtheorem{lem}[thm]{Lemma}

\newtheorem{cor}[thm]{Corollary}

\theoremstyle{definition}

\newtheorem{claim}[thm]{Claim}

\newtheorem{eg-eg}[thm]{Example of Example}

\newtheorem{rem}[thm]{Remark}

\usepackage[colorlinks=false]{hyperref}
\usepackage{graphicx}
\usepackage{amssymb}
\usepackage{epstopdf}
\usepackage{enumerate}
\usepackage{tikz}
\usepackage{marginnote}
\usepackage{epigraph}

\setlength{\marginparwidth}{2cm}

\usepackage{soul}

\usepackage{mdframed}
\newmdenv[
  topline=false,
  bottomline=false,
  rightline=false,
  skipabove=\topsep,
  skipbelow=\topsep
]{siderules}

\numberwithin{equation}{section}

\usepackage{mathtools}

\usepackage{epigraph}

\usepackage{color}
\usepackage[color,matrix,arrow]{xy}
\xyoption{all}

\DeclareGraphicsRule{.tif}{png}{.png}{`convert #1 `dirname #1`/`basename #1 .tif`.png}

\title{$p$-torsion \'etale sheaves on the Jacobian of a curve}

\author{Yifei Zhao}
\email{yifei.zhao.93@gmail.com}
\date{November 5, 2021}
\keywords{Jacobian, geometric class field theory, mod-$p$ \'etale sheaves}

\begin{document}

\maketitle


\begin{abstract}
Suppose $X$ is a smooth, proper, geometrically connected curve over $\mathbb F_q$ with an $\mathbb F_q$-rational point $x_0$. For any $\mathbb F_q^{\times}$-character $\sigma$ of $\pi_1(X)$ trivial on $x_0$, we construct a functor $\mathbb L_n^{\sigma}$ from the derived category of coherent sheaves on the moduli space of deformations of $\sigma$ over the Witt ring $W_n(\mathbb F_q)$ to the derived category of constructible $W_n(\mathbb F_q)$-sheaves on the Jacobian of $X$. The functors $\mathbb L_n^{\sigma}$ categorify the Artin reciprocity map for geometric class field theory with $p$-torsion coefficients. We then give a criterion for the fully faithfulness of (an enhanced version of) $\mathbb L_n^{\sigma}$ in terms of the Hasse--Witt matrix of $X$.
\end{abstract}

\setcounter{tocdepth}{1}
\tableofcontents

\section*{Introduction}

\subsection{Results of this paper}
\subsubsection{}
Fix a prime $p$. Let $X$ be a smooth, proper, geometrically connected curve over $\mathbb F_q$, where $q$ is a power of $p$. The Artin reciprocity map of (unramified, global) geometric class field theory is a homomorphism of pro-finite groups:
\begin{equation}
\label{eq-cft-map}
\op{Pic}(\mathbb F_q) \rightarrow \pi_1(X)_{\op{ab}},
\end{equation}
where $\op{Pic}$ denotes the Picard scheme of $X$, and $\pi_1(X)_{\op{ab}}$ is the abelianization of the \'etale fundamental group. P.~Deligne constructed the homomorphism \eqref{eq-cft-map} by a geometric argument that relates $\ell$-adic characters of both sides, where $\ell$ is a prime different from $p$. Its key step lies in establishing an association:
\begin{equation}
\label{eq-cft-assoc}
\left\{\txt{rank--$1$ \'etale \\ $\overline{\mathbb Q}_{\ell}$-local systems on $X$}\right\} \rightarrow \left\{\txt{character $\overline{\mathbb Q}_{\ell}$-sheaves \\ on $\op{Pic}$}\right\}.
\end{equation}
The character $\overline{\mathbb Q}_l$-sheaf attached to a rank--$1$ \'etale local system satisfies a certain Hecke property, which ensures that its trace of Frobenius defines the correct character of $\op{Pic}(\mathbb F_q)$. (See \cite{To11} for an exposition of this approach.).

\subsubsection{}
In the case of $p$-torsion coefficients, this paper presents an alternative approach to constructing an analogue of the association \eqref{eq-cft-assoc}. It has the additional benefit of yielding some information about deformations of $p$-torsion characters of $\pi_1(X)$. This approach is categorical in nature and is based on the Fourier--Mukai--Laumon transform.

\subsubsection{} Let $W_n$ denote the length--$n$ Witt ring of $\mathbb F_q$ and we fix a smooth lift $X_n$ of the curve $X$ to $W_n$. For simplicity, we further assume that $X$ has an $\mathbb F_q$-rational point $x_0$, and we will only be concerned with $W_n^{\times}$-characters of $\pi_1(X)$ trivial on $\pi_1(x_0)$. (Here, $\pi_1(X)$ is defined by choosing a geometric point $\overline x_0$ lying over $x_0$.) Let $\op{Jac}$ denote the Jacobian scheme of $X$ classifying a degree--$0$ line bundle together with a trivialization at $x_0$.

\subsubsection{} The first goal of this paper is to define a (contravariant) functor, valued in the derived category of constructible \'etale $W_n$-sheaves on $\op{Jac}$:
\begin{equation}
\label{eq-artin-recip-enh-intro}
\mathbb L_{n} : \op{Coh}(\widetilde{\op{Jac}}{}_n^{\natural})_{\circ}^{\op{id}} \rightarrow W_n\Shv_c(\op{Jac})^{\op{op}}.
\end{equation}
Here, $\widetilde{\op{Jac}}{}_n^{\natural}$ is a certain scheme over $W_n$ whose sections are identified with rank--$1$ \'etale $W_n$-local systems on $X$ trivialized at $x_0$, with additional structure related to their deformations (see below). The left hand side means the derived category of coherent sheaves over $\widetilde{\op{Jac}}{}_n^{\natural}$ of finite Tor-dimension over $W_n$, equipped with an arbitrary automorphism.

\subsubsection{} We call $\mathbb L_n$ the $p$-torsion Artin reciprocity \emph{functor}. It categorifies the association:
$$
\left\{\txt{rank--$1$ \'etale $W_n$-local systems \\ on $X$ trivialized at $x_0$}\right\} \rightarrow \left\{\txt{character $W_n$-sheaves \\ on $\op{Jac}$}\right\}
$$
in the following sense: given an object of the left hand side, viewed as a section $\rho : \Spec(W_n) \rightarrow \widetilde{\op{Jac}}{}_n^{\natural}$, we may consider the pushforward $\rho_*\cal O_{W_n}$ (together with the identity automorphism) as an object of $ \op{Coh}(\widetilde{\op{Jac}}{}_n^{\natural})_{\circ}^{\op{id}}$. Then its image under $\mathbb L_n$ is a character $W_n$-sheaf on the Jacobian satisfying the Hecke eigen-property.

\subsubsection{} The main result of the paper concerns the behavior of the functor $\mathbb L_n$.

\begin{thm*}
The following are equivalent:
\begin{enumerate}[(a)]
	\item The functor $\mathbb L_n$ is fully faithful;
	\item The $\op{Fr}_X^*$-action on $\op H^1(X; \cal O_X)$ is nilpotent.
\end{enumerate}
\end{thm*}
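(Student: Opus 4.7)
My plan is to reduce fully faithfulness to a cohomological comparison at the identity section of $\widetilde{\op{Jac}}_n^{\natural}$, compute both sides of the comparison, and then identify the precise condition under which they agree via Artin--Schreier--Witt theory. The first step exploits that $\mathbb L_n$ is of Fourier--Mukai type and equivariant for the group structures on both sides, so that (using that skyscraper sheaves generate the source in the derived sense, together with translation invariance) fully faithfulness reduces to checking that the natural map
\[
\op{RHom}_{\op{Coh}(\widetilde{\op{Jac}}_n^{\natural})_\circ^{\mathbb Z}}\!\bigl(\cal O_e, \cal O_e\bigr) \;\longrightarrow\; \op{RHom}_{W_n\Shv_c(\op{Jac})}\!\bigl(\mathbb L_n(\cal O_e), \mathbb L_n(\cal O_e)\bigr)
\]
is a quasi-isomorphism, where $\cal O_e$ is the structure sheaf of the identity section, equipped with the identity automorphism to define an object of the source category.

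Next I compute both sides explicitly. Since $\mathbb L_n(\cal O_e) = (W_n)_{\op{Jac}}$ is the constant sheaf, the right-hand side equals $R\Gamma_{\et}(\op{Jac}; W_n) \simeq \Lambda^\ast H^1_{\et}(\op{Jac}; W_n) \simeq \Lambda^\ast H^1_{\et}(X; W_n)$, using the standard description of the cohomology of an abelian variety. For the left-hand side, by the deformation-theoretic description of $\widetilde{\op{Jac}}_n^{\natural}$ at $e$ (which I expect to be established earlier in the paper), the tangent space at $e$ is $M := H^1(X; W_n\cal O_X)$ equipped with its Frobenius $F$. A Koszul resolution then gives $\op{RHom}(\cal O_e, \cal O_e) \simeq \Lambda^\ast(M[-1])$ as a graded $W_n$-algebra, with the $\mathbb Z$-equivariance decoration recording $F$ as an automorphism.

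The comparison between the two sides is now governed by Artin--Schreier--Witt. The short exact sequence
\[
0 \longrightarrow \mathbb Z/p^n \longrightarrow W_n\cal O_X \xrightarrow{\;F - 1\;} W_n\cal O_X \longrightarrow 0
\]
identifies $H^1_{\et}(X; W_n)$ (after accounting for the arithmetic term, which is killed by the trivialization at $x_0$) with $\op{fib}(F - 1 : M \to M)$, and the comparison map in the first paragraph unfolds, on the linear term, to the canonical map realizing this identification. Unwinding the $\mathbb Z$-equivariance, the comparison is an equivalence precisely when $F - 1$ is invertible on $M$, which for $F$ a $\sigma$-semi-linear endomorphism of a finitely generated $W_n$-module is equivalent to nilpotence of $F$. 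A d\'evissage along $0 \to \cal O_X \xrightarrow{V^{n-1}} W_n\cal O_X \to W_{n-1}\cal O_X \to 0$ then reduces this to nilpotence of $F$ on $H^1(X; \cal O_X)$, giving condition (b).

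The principal obstacle is to correctly set up the dictionary underlying the first reduction: I must identify the $\mathbb Z$-automorphism decoration on coherent sheaves on $\widetilde{\op{Jac}}_n^{\natural}$ with the Frobenius structure relevant for Artin--Schreier--Witt, and rigorously establish the translation-equivariance of $\mathbb L_n$ needed to reduce to the identity fiber. Once these structural facts are in hand, the remainder of the argument is essentially forced by Koszul duality and the Artin--Schreier--Witt short exact sequence.
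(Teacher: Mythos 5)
Your high-level plan—reduce to a Hom computation, describe both sides via Koszul duality, and compare—resembles the paper's strategy in broad outline, but several of the load-bearing steps break down. The paper instead first observes that all of $\mathbb L_n$ except the first step $\Coh(\widetilde{\op{Jac}}_n^{\natural})^{\mathbb Z}_{\circ}\to\Coh(\widetilde{\op{Jac}}_n)^{\op{Ver}_*}$ is an equivalence, reducing the Theorem to a general criterion (Proposition \ref{prop-nilpotence-abstract}) comparing geometric and categorical fixed points, and then uses the Koszul resolution of the \emph{regular} immersion $\iota:\widetilde{\op{Jac}}_n^\natural\hookrightarrow\widetilde{\op{Jac}}_n$.

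Three concrete gaps in your version. First, the reduction to the single map $\op{RHom}(\cal O_e,\cal O_e)\to\op{RHom}(\mathbb L_n\cal O_e,\mathbb L_n\cal O_e)$ is unjustified: $\widetilde{\op{Jac}}_n^{\natural}$ is a non-reduced finite $W_n$-scheme, so the skyscraper $\cal O_e$ at the $W_n$-point $e$ does not generate $\Coh(\widetilde{\op{Jac}}_n^{\natural})$ (translation only permutes connected components and cannot reach objects supported on a thickening of $e$), and—crucially—the $\mathbb Z$-equivariance parametrizes an \emph{arbitrary} automorphism $\alpha_{\cal F}$, which you have frozen to the identity. Second, there is no Koszul resolution for the inclusion $e:\Spec(W_n)\hookrightarrow\widetilde{\op{Jac}}_n^{\natural}$: both source and target are zero-dimensional, so this is a codimension-$0$ closed immersion that is not an isomorphism, hence not regular. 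The Koszul computation lives one level up, for $\iota:\widetilde{\op{Jac}}_n^{\natural}\hookrightarrow\widetilde{\op{Jac}}_n$, and the relevant module is the conormal sheaf $\cal N_{\widetilde{\op{Jac}}_n^{\natural}/\widetilde{\op{Jac}}_n}$ (identified in the paper with the restriction of the cotangent sheaf of $\widetilde{\op{Jac}}_n$, dual to $\op H^1_{\dR}(X_n/W_n)$), not $\op H^1(X;W_n\cal O_X)$. Third, and most seriously, the criterion ``$F-1$ invertible'' that your Artin--Schreier--Witt argument produces is strictly weaker than ``$F$ nilpotent'': any $F$ without eigenvalue $1$ satisfies the former without being nilpotent. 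The reason the paper obtains genuine nilpotence is that full faithfulness must hold for \emph{all} $(\cal F,\alpha_{\cal F})$, i.e., $\op{id}-\alpha^{-1}(-)\alpha\otimes(v^*)^{\vee}$ must be invertible for every automorphism $\alpha$; quantifying over all automorphisms is exactly what upgrades invertibility of $1-F$ to nilpotence of $F$ (this is the content of the paper's final commutative-algebra lemma, whose Steps 1--3 show that the twist by $\alpha=(v^{-1})^\vee$ forces $v$ to be non-invertible and then, by d\'evissage, nilpotent). Your restriction to $\cal O_e$ with the identity automorphism discards precisely this freedom, so the ``only if'' direction cannot be recovered.
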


\noindent
The $\op{Fr}_X^*$-action on $\op H^1(X; \cal O_X)$ is known as the Hasse--Witt matrix of $X$. For example, when $X$ has genus $1$, condition (b) means that $X$ is a supersingular elliptic curve.

\subsubsection{} We now turn to an overview of the construction of $\mathbb L_n$. Let $\op{Jac}_n$ denote the Jacobian of the lift $X_n$ and $\widetilde{\op{Jac}}_n$ be its universal additive extension, which classifies $\cal L\in\op{Jac}_n$ equipped with an integrable connection. Our construction uses the Fourier--Mukai--Laumon transform for crystalline $\cal D$-modules (i.e., quasi-coherent sheaves equipped with an integrable connection):
\begin{equation}
\label{eq-fml-transform-intro}
\Phi : \QCoh(\widetilde{\op{Jac}}_n) \xrightarrow{\sim} \cal D\Mod(\op{Jac}_n).
\end{equation}
Here, the two sides refer to the derived category of quasi-coherent sheaves, respectively $\cal D$-modules. We will verify a Frobenius compatibility statement of the functor $\Phi$, which asserts that the following diagram is commutative:
\begin{equation}
\label{eq-frob-compat-intro}
\xysmall{
	\QCoh(\widetilde{\op{Jac}}_n)  \ar[r]^-{\Phi} \ar[d]^{\op{Ver}_*} &  \cal D\Mod(\op{Jac}_n) \ar[d]^{\op{Fr}^*} \\
	\QCoh(\widetilde{\op{Jac}}_n)  \ar[r]^-{\Phi} &  \cal D\Mod(\op{Jac}_n)
}
\end{equation}
Here, the Frobenius-pullback of $\cal D$-modules is well defined \emph{without} a global lift of Frobenius, per an observation of P.~Berthelot \cite{Be00}. The Verschiebung endormophism on $\widetilde{\op{Jac}}_n$ is defined by Frobenius pullback of $\cal D$-modules on the curve, and we set $\widetilde{\op{Jac}}{}_n^{\natural}$ to be its fixed-point locus.

\subsubsection{} The functor $\mathbb L_n$ arises from taking ``categorical fixed points" of $\op{Ver}_*$ and $\op{Fr}^*$ on both sides of \eqref{eq-fml-transform-intro}. Namely, we use the $p$-torsion Riemann--Hilbert correspondence of Emerton--Kisin \cite{EK04} to relate Frobenius-fixed points in $\cal D\Mod(\op{Jac}_n)$ to constructible \'etale $W_n$-sheaves on $\op{Jac}$, and the Verschiebung-fixed point locus $\widetilde{\op{Jac}}{}_n^{\natural}$ to $W_n$-local systems on $X$.

It is worth emphasizing that the process of taking ``categorical fixed points" is $\infty$-category; namely, the analogous operation on the level of triangulated categories would yield an incorrect category. The interpretation of the Emerton--Kisin Riemann--Hilbert correspondence through categorical fixed points seems to be a fruitful idea, and we expect it to have other applications in the study of $p$-torsion constructible \'etale sheaves.

\subsubsection{} The present paper is motivated by certain perspectives from the geometric Langlands program in characteristic $p$. Indeed, for $n=1$, the transformation \eqref{eq-fml-transform-intro} generalizes to a fully faithful embedding of quasi-coherent sheaves on a dense open substack of $\op{LocSys}_r$, the stack of rank-$r$ crystalline local systems, into $\cal D$-modules on the stack of rank-$r$ vector bundles, by the work of Bezrukavnikov--Braverman \cite{BB06}.

Whether or not this embedding generalizes to quasi-coherent sheaves on the entire stack $\op{LocSys}_r$ remains speculative. However, if it does and satisfies the analogue of Frobenius compatibility \eqref{eq-frob-compat-intro}, techniques of the present paper would indicate a way to study mod-$p$ (or more generally, $p$-torsion) automorphic sheaves via Langlands dual data.

\subsection{Organization}

\subsubsection{} In the first section, we recall the $p$-torsion Riemann--Hilbert correspondence due to Emerton--Kisin. The key result is Proposition \ref{prop-ab-to-der} (and its Corollary) which allows us to view constructible $W_n$-sheaves as $\op{Fr}^*$-fixed points of the $\infty$-category of crystalline $\cal D$-modules.

\subsubsection{} In the second section, we verify that the Fourier--Mukai--Laumon transform satisfies Frobenius compatibility. This fact is then used to build the main functor $\mathbb L_n$. We then show that $\mathbb L_n$ de-categorifies into the Artin reciprocity map for $W_n^{\times}$-characters.

\subsubsection{} In the final section, we prove the main Theorem. The construction of $\mathbb L_n$ makes it clear that the only source of failure of fully faithfulness lies in the passage from modules on the Verschiebung-fixed locus $\widetilde{\op{Jac}}{}_n^{\natural}$ to modules on $\widetilde{\op{Jac}}_n$ equipped with an isomorphism to its Verschiebung-pushforward. This can be seen as a general phenomenon that arises in comparing geometric, \emph{vis-\`a-vis} categorial, fixed points. Our main result, Proposition \ref{prop-nilpotence-abstract}, answers this general question when the geometric fixed-point locus is a regularly immersed affine scheme.

\subsection{Notations}

\subsubsection{} We comment on our notations pertaining to category theory. The notations having to do with geometric objects will be explained at the beginning of each section.

\subsubsection{} In this paper, we use the theory of $\infty$-categories as developped by J.~Lurie \cite{Lu09} \cite{Lu17}. We do not make model-dependent arguments, so any flavor of $\infty$-categories with the same formal structure suffices for our purpose.

\subsubsection{} For an associative algebra $A$ (in the classical sense), we write $A\Mod$ for the $\infty$-category of chain complexes of $A$-modules. When we work with the abelian subcategory, we denote it by $A\Mod^{\heartsuit}$, understood as the heart of the usual $t$-structure on $A\Mod$. In line with this notation, we often write ``$A$-module'' to mean an object of $A\Mod$, i.e., a complex of $A$-modules.

\subsubsection{} The same convention applies to the $\infty$-categories $\QCoh(Y)$, $\cal D\Mod(Y)$, etc., for a scheme $Y$. By default, all functors are derived. However, sometimes we emphasize their derived nature using notations such as $Li^*$, $Rf_*$.

\subsection{Acknowledgements} The author thanks Robert Cass, Lin Chen, and Dennis Gaitsgory for conversations related to this work, and to Mark Kisin for answering a question about Katz's theorem. The author thanks the anonymous referee for helpful feedback on an earlier version of this paper. Special thanks are due to James Tao, who suggested several improvements.

\medskip

\section{Unit $F$-crystals}

In this section, we first recall the $p$-torsion Riemann--Hilbert correspondence due to Emerton--Kisin \cite{EK04}. It asserts that the derived category of $p$-torsion \'etale sheaves is (anti-)equivalent to that of arithmetic $\cal D$-modules $\cal M$ equipped with an isomorphism to its Frobenius pullback $\psi_{\cal M} : \cal M \xrightarrow{\sim} \op{Fr}^*\cal M$, satisfying some finiteness conditions. We then show that the datum of $\psi_{\cal M}$ can be expressed on the level of chain complexes, provided that one works with stable $\infty$-categories.

\subsection{The Riemann--Hilbert correspondence}
\label{sec-rh}

\subsubsection{}
In this section, we fix a perfect field $k$ containing $\mathbb F_q$. Let $W_n(k)$ (resp.~$W_n(\mathbb F_q)$) denote the ring of length--$n$ Witt vectors over $k$ (resp.~$\mathbb F_q$); it is equipped with a canonical lift of the $q$th power Frobenius, denoted by $\op{Fr}_{W_n(k)}$. Suppose $Y$ is a \emph{smooth} scheme over $W_n(k)$. Let $Y_0$ denote the base change of $Y$ to $k$.

\subsubsection{} Denote by $W_n(\mathbb F_q)\Shv(Y_0)$ the $\infty$-category of \'etale $W_n(\mathbb F_q)$-sheaves on $Y_0$. More precisely, it is the $\infty$-category of $W_n(\mathbb F_q)\Mod$-valued \'etale sheaves of \cite[Definition 2.2.1.2]{GL19}. It contains as full subcategory:
$$
W_n(\mathbb F_q)\Shv_c(Y_0) \subset W_n(\mathbb F_q)\Shv(Y_0)
$$
the $\infty$-category of constructible \'etale $W_n(\mathbb F_q)$-sheaves, which consists of objects $\cal F$ of finite Tor-dimension over $W_n(\mathbb F_q)$ and such that each $\op H^i(\cal F)$ is constructible (see \cite[\S2, Proposition-d\'efinition 4.6]{De77}).\footnote{The homotopy category of $W_n(\mathbb F_q)\Shv_c(Y)$ is denoted by $D^b_{\op{ctf}}(X_{\et}, W_n(\mathbb F_q))$ in \cite{EK04}.} The Riemann--Hilbert correspondence of Emerton--Kisin \cite{EK04} expresses $W_n(\mathbb F_q)\Shv_c(Y_0)$ in terms of \emph{unit $F$-crystals} on $Y$. The goal of this subsection is to review these objects and state a form of the Riemann--Hilbert correspondence.

\subsubsection{} For an integer $\nu\ge 0$, let $\cal D_Y^{(\nu)}$ denote the ring of $W_n(k)$-linear differential operators on $Y$ with level-$\nu$ divided powers; it can be thought of as the $\cal O_Y$-algebra generated by differential operators of the form $\partial^{p^k}/p^k!$ for $k\le\nu$. More formally, the ring $\cal D_Y^{(\nu)}$ is defined to be the topological dual of functions on $(Y\underset{W_n(k)}{\times} Y)_{\widehat Y}^{(\nu\PD)}$, the level-$\nu$ divided power neighborhood of the closed immersion $\Delta : Y\hookrightarrow Y\underset{W_n(k)}{\times}Y$ (see \cite[\S 2.2.1]{Be96}). Therefore, a quasi-coherent $\cal D_Y^{(\nu)}$-module is equivalent to a quasi-coherent $\cal O_Y$-module equivariant with respect to the action of the divided power infinitesimal groupoid:\footnote{This equivariance datum is called \emph{$\nu$-PD-stratification} in \emph{loc.cit}.}
$$
\xysmall{
\cdots \ar@<-1ex>[r] \ar[r] \ar@<1ex>[r] & (Y\underset{W_n(k)}{\times} Y)_{\widehat Y}^{(\nu\PD)} \ar@<-0.5ex>[r] \ar@<0.5ex>[r] & Y.
}
$$

\subsubsection{} There is a sequence of (non-injective) morphisms of filtered algebras:
$$
\cal D_Y^{(0)} \rightarrow \cal D_Y^{(1)} \rightarrow \cdots \rightarrow \cal D_Y:=\underset{\nu}{\op{colim}}\,\cal D_Y^{(\nu)}.
$$
In particular, there are forgetful functors in the opposite direction:
$$
\cal D_Y\Mod \rightarrow \cdots \rightarrow \cal D_Y^{(1)}\Mod \rightarrow \cal D_Y^{(0)}\Mod.
$$
Objects of $\cal D_Y^{(0)}\Mod$ are usually called \emph{crystalline} $\cal D$-modules.\footnote{We change the notation from the introduction, where $\cal D_Y^{(0)}$-modules was denoted by $\cal D_Y\Mod$.} They are precisely quasi-coherent $\cal O_Y$-modules equipped with an integrable connection relative to $W_n(k)$.

\subsubsection{}
\label{sec-frob-pullback}
An important observation due to P.~Berthelot \cite[\S2]{Be00} is that there is a canonically defined $W_n(\mathbb F_q)$-linear endofunctor $\op{Fr}_Y^*$ on $\cal D_Y^{(\nu)}\Mod^{\heartsuit}$, which can be constructed by choosing any local lift $\op{Fr}_Y$ of the $q$th power Frobenius and showing that the resulting $\cal D_Y^{(\nu)}$-module is canonically independent of the lift. Furthermore, $\op{Fr}_Y^*$ enhances to a functor (denoted by the same name):
\begin{equation}
\label{eq-berth-pullback}
\op{Fr}_Y^* : \cal D_Y^{(\nu)}\Mod^{\heartsuit} \rightarrow \cal D_Y^{(\nu+1)}\Mod^{\heartsuit}.
\end{equation}

\begin{rem}
In fact, the functor \eqref{eq-berth-pullback} is an equivalence of categories, by Th\'eor\`eme 2.3.6 of \emph{op.cit.}, but we shall not use this fact.
\end{rem}

\subsubsection{} We write $\cal D_{F, Y}^{(\nu)}\Mod^{\heartsuit}$ for the abelian category of objects $\cal M \in \cal D_Y^{(\nu)}\Mod^{\heartsuit}$ together with a morphism $\psi_{\cal M} : \op{Fr}_Y^* \cal M \rightarrow\cal M$. The datum of $\psi_{\cal M}$ can alternatively be described as an enhancement of the $\cal D_Y^{(\nu)}$-action to an action of:
$$
\cal D_{F, Y}^{(\nu)} := \bigoplus_{r\ge 0} (\op{Fr}_Y^*)^r\cal D_Y^{(\nu)},
$$
for a canonically defined algebra structure on $\cal D_{F, Y}^{(\nu)}$ (see \cite[\S13.3]{EK04}). We call $\cal D_{F, Y}^{(\nu)}\Mod^{\heartsuit}$ the abelian category of \emph{$F$-crystals of level $\nu$}. It contains enough injective objects and we may write $D^+(\cal D_{F, Y}^{(\nu)}\Mod^{\heartsuit})$ for its (bounded below) derived $\infty$-category.

\subsubsection{}
\label{sec-derived-cat-level}
We introduce two of its full subcategories:
$$
\cal D_{F, Y}^{(\nu)}\Mod^{\heartsuit}_{\op{lfg-u}} \subset \cal D_{F, Y}^{(\nu)}\Mod_{\op u}^{\heartsuit} \subset \cal D_{F, Y}^{(\nu)}\Mod^{\heartsuit}.
$$
They are defined by successively imposing the following conditions:
\begin{enumerate}[(a)]
	\item $\cal D_{F, Y}^{(\nu)}\Mod^{\heartsuit}_{\op u}$ --- where $\psi_{\cal M}$ is an isomorphism; these are called \emph{unit $F$-crystals}. We note that by \eqref{eq-berth-pullback}, the $\cal D_Y^{(\nu)}$-module structure is promoted to a $\cal D_Y$-module structure. Hence the notion of \emph{unit} $F$-crystals is independent of the level $\nu$, i.e. the forgetful functors:
	$$
	\cal D_{F, Y}\Mod^{\heartsuit}_{\op u} \rightarrow \cdots \rightarrow \cal D_{F,Y}^{(1)}\Mod^{\heartsuit}_{\op u} \rightarrow \cal D_{F, Y}^{(0)}\Mod^{\heartsuit}_{\op u}
	$$
	are equivalences of abelian categories.
	\item $\cal D_{F, Y}^{(\nu)}\Mod^{\heartsuit}_{\op{lfg-u}}$ --- where $\cal M$ further contains an $\cal O_Y$-coherent submodule $\cal M_0$ such that the action map $\cal D_{F, Y}^{(\nu)} \underset{\cal O_Y}{\otimes} \cal M_0 \rightarrow \cal M$ is surjective.
\end{enumerate}

\subsubsection{}
Write $D_{\op{lfg-u}, \circ}^b(\cal D_{F, Y}^{(\nu)}\Mod^{\heartsuit})$ for the full $\infty$-subcategory of $D^+(\cal D_{F, Y}^{(\nu)}\Mod^{\heartsuit})$ consisting of complexes $\cal M$ satisfying the following conditions:
\begin{enumerate}[(a)]
	\item $\cal M$ is of finite Tor-dimension as a complex of $\cal O_Y$-modules;
	\item $\op H^i(\cal M)$ belongs to $\cal D_{F, Y}^{(\nu)}\Mod^{\heartsuit}_{\op{lfg-u}}$ for each $i$.
\end{enumerate}
The functor \eqref{eq-berth-pullback} can be promoted to $\op{Fr}_Y^* : \cal D_{F,Y}^{(\nu)}\Mod^{\heartsuit} \rightarrow \cal D_{F,Y}^{(\nu+1)}\Mod^{\heartsuit}$ and thus on the derived $\infty$-category. In particular, we see that $D_{\op{lfg-u}, \circ}^b(\cal D_{F, Y}^{(\nu)}\Mod^{\heartsuit})$ is also independent of the level $\nu$, i.e., the following forgetful functors:
\begin{equation}
\label{eq-equiv-change-level}
D_{\op{lfg-u}, \circ}^b(\cal D_{F, Y}\Mod^{\heartsuit}) \rightarrow \cdots\rightarrow D_{\op{lfg-u}, \circ}^b(\cal D_{F, Y}^{(1)}\Mod^{\heartsuit}) \rightarrow D_{\op{lfg-u}, \circ}^b(\cal D_{F, Y}^{(0)}\Mod^{\heartsuit})
\end{equation}
are equivalences.

\subsubsection{}
Define a functor of $W_n(\mathbb F_q)$-linear $\infty$-categories (here, we take $\nu$ to be infinite):
$$
\op{Sol} : D^b(\cal D_{F,Y}\Mod^{\heartsuit}) \rightarrow W_n(\mathbb F_q)\Shv^+(Y_0),
$$
by sending $\cal M$ to:
$$
\op{Sol}(\cal M) := R\Hom_{\cal D_{F, Y}}(\cal M_{\et}, \cal O_Y)[\dim(Y)],
$$
where $\cal M_{\et}$ is the \'etale $\cal D_{F, Y}$-module associated to $\cal M$.

\begin{thm}[Emerton--Kisin]
The restriction of $\op{Sol}$ to $D_{\op{lfg-u}, \circ}^b(\cal D_{F,Y}\Mod^{\heartsuit})$ defines an anti-equivalence of $W_n(\mathbb F_q)$-linear $\infty$-categories:
\begin{equation}
\label{eq-rh}
D_{\op{lfg-u}, \circ}^b(\cal D_{F,Y}\Mod^{\heartsuit})^{\op{op}} \xrightarrow{\sim} W_n(\mathbb F_q)\Shv_c(Y_0).
\end{equation}
\end{thm}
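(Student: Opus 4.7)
The plan is to construct an inverse functor in the natural direction and show it is mutually quasi-inverse to $\op{Sol}$. Since $\cal O_Y$ is itself a unit $F$-crystal, a natural candidate for the inverse sends a constructible sheaf $\cal F$ to $R\Hom_{W_n(\mathbb F_q)}(\cal F, \cal O_Y)[\dim Y]$, equipped with its induced $\cal D_{F,Y}$-module structure. I would first reduce to the abelian hearts: both sides are generated under shifts and finite extensions by their hearts, so $t$-exactness of $\op{Sol}$ (up to the shift by $\dim Y$) combined with a standard spectral sequence reduces fully faithfulness and essential surjectivity to the corresponding statements for the abelian category $\cal D_{F,Y}\Mod^{\heartsuit}_{\op{lgfu}, \circ}$.

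Next I would reduce from general $n$ to $n=1$ via devissage along $0 \to W_1(\mathbb F_q) \to W_n(\mathbb F_q) \to W_{n-1}(\mathbb F_q) \to 0$ (multiplication by $p^{n-1}$). The functor $\op{Sol}$ is manifestly compatible with change of rings because $\cal O_Y$ is flat over $W_n(k)$, so the five lemma upgrades the $n=1$ equivalence to general $n$. The base case hinges on the local structure of coherent unit $F$-crystals over $\mathbb F_q$: any such object is \'etale-locally of the form $(\cal O_Y^{\oplus m}, \psi)$ with $\psi$ an invertible $\cal O_Y$-linear map $\op{Fr}_Y^*\cal O_Y^{\oplus m} \xrightarrow{\sim} \cal O_Y^{\oplus m}$, and its $\op{Sol}$ is computed by the associated Artin--Schreier torsor. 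This is the geometric heart of the proof: combined with the exact sequence $0 \to \mathbb F_q \to \cal O_Y \xrightarrow{1-\op{Fr}^*} \cal O_Y \to 0$, it produces an \'etale-locally constant constructible $\mathbb F_q$-sheaf of rank $m$. To extend from the lisse case to lgfu I would stratify $Y$ along the locus where a coherent generator $\cal M_0 \subset \cal M$ behaves freely, and match this stratification with the constructibility stratification on the sheaf side.

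The main obstacle will be the derived enhancement, since $\cal D_{F,Y}$ is highly non-Noetherian and lgfu is a delicate finiteness condition, so projective resolutions are not available in the usual way. Following \cite{EK04}, I would exploit the unit structure to build a short resolution: the natural surjection $\cal D_{F,Y} \underset{\cal O_Y}{\otimes} \cal M_0 \twoheadrightarrow \cal M$ has an $\cal O_Y$-coherent kernel (since $\cal M$ is generated as a $\cal D_{F,Y}$-module by $\cal M_0$ and $\psi_{\cal M}$ is an isomorphism), so $R\Hom_{\cal D_{F,Y}}(\cal M, \cal O_Y)$ is concentrated in two degrees and computable from $\cal M_0$ alone. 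Matching this two-term complex against the Artin--Schreier resolution of the target sheaf then yields the required isomorphism on higher $\op{Ext}$ groups. The finite Tor dimension condition $\circ$ transports cleanly because $\cal O_Y$ is flat over $W_n(\mathbb F_q)$, so beyond careful bookkeeping along the stratification no additional work is needed.
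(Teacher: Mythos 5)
The paper does not actually reprove this theorem: the entire argument is a one-line reduction. Since an exact functor between stable $\infty$-categories is an equivalence precisely when it induces an equivalence of triangulated homotopy categories, the $\infty$-categorical statement reduces to the triangulated one, which is the main theorem of \cite{EK04} (with a remark that their proof works for $q$ a power of $p$, not just $q=p$). Your proposal instead sets out to reprove Emerton--Kisin from scratch, which is an entirely different and far more ambitious undertaking.

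Moreover, two of your intermediate claims are incorrect as stated, so the proposed proof would not go through even granting the ambition. First, $\op{Sol}$ is \emph{not} $t$-exact (even up to the shift $[\dim Y]$) for the standard $t$-structures. The lgfu condition does not force $\cal M$ to be $\cal O_Y$-coherent --- it only requires generation over $\cal D_{F,Y}$ by a coherent submodule --- and for such $\cal M$ the solution complex generally spreads over several cohomological degrees. Establishing the correct cohomological amplitude bounds is itself a nontrivial part of \cite{EK04}, not a free hypothesis. Consequently your reduction to abelian hearts via a spectral sequence cannot get started. Second, your base case assumes that over $\mathbb F_q$ an lgfu unit $F$-crystal is \'etale-locally of the form $(\cal O_Y^{\oplus m},\psi)$ and hence amenable to the Katz/Artin--Schreier dictionary. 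That description applies only to $\cal O_Y$-\emph{coherent} unit $F$-crystals, i.e. to the lisse case. A basic example like $\cal O_Y[1/f]$ with its natural Frobenius on $Y=\mathbb A^1$ is lgfu but not locally free, and its solution sheaf is the extension-by-zero of the constant sheaf off $V(f)$, not something seen by your \'etale-local free model. Relatedly, the kernel of $\cal D_{F,Y}\otimes_{\cal O_Y}\cal M_0\twoheadrightarrow \cal M$ is not $\cal O_Y$-coherent; the actual two-term resolution in \cite{EK04} (and reproduced in this paper as \eqref{eq-two-step-filt}) has a second induced module $\cal D_{F,Y}^{(\nu)}\otimes\op{Fr}_Y^*\cal M$ as the kernel, with a non-induced map $\alpha$, which is quite different in shape from what you describe.
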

\begin{proof}
It suffices to check that $\op{Sol}$ becomes an equivalence after passing to the homotopy categories. For $q=p$, this is the main theorem of \cite{EK04}, but their proof also applies to the case where $q$ is a power of $p$.
\end{proof}

\begin{rem}
The equivalences \eqref{eq-equiv-change-level} show that the same result holds for $\cal D_{F,Y}^{(\nu)}$-modules of any level $\nu\ge 0$.
\end{rem}

\subsection{Katz's equivalence}

\subsubsection{} As in the characteristic--zero case, \eqref{eq-rh} specializes to an equivalence of $1$-categories corresponding to \emph{lisse} sheaves. The latter equivalence is essentially due to N.~Katz \cite{Ka73}. We will need a generalization of this special case where we allow deformations over $W_n(\mathbb F_q)$. The result is likely well known, but we could not find a suitable reference.

\begin{lem}
\label{lem-katz-equiv}
Suppose $S=\Spec(A)$ is a finite $W_n(\mathbb F_q)$-scheme. Then the following categories are equivalent functorially in $S$:
\begin{enumerate}[(a)]
	\item \'etale $A$-sheaves on $Y$ which are locally isomorphic to $\underline A^{\oplus r}$;
	\item rank--$r$ vector bundles $\cal E$ on $S\underset{W_n(\mathbb F_q)}{\times}Y$ equipped with an integrable connection along $Y$ relative to $W_n(k)$, and an isomorphism $\op{Fr}_Y^*\cal E \xrightarrow{\sim} \cal E$ as $\cal D^{(0)}$-modules.
\end{enumerate}
\end{lem}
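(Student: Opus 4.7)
The plan is to reduce to Katz's original equivalence (the case $A = W_n(\mathbb F_q)$) by promoting both sides to $A$-module objects.

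First, I would treat the base case $A = W_n(\mathbb F_q)$, where the assertion is the classical Katz equivalence: a rank--$r$ locally constant \'etale $W_n(\mathbb F_q)$-sheaf on $Y$ corresponds, via $\cal E \mapsto \cal E^{\psi = 1}$, to a unit $F$-crystal whose underlying $\cal O_Y$-module is locally free of rank $r$. For $n = 1$ this is a special case of Katz's main theorem in \cite{Ka73}; for general $n$, I would argue by induction along the short exact sequence $0 \to W_{n-1} \to W_n \to \mathbb F_q \to 0$ and the five lemma. Alternatively, it can be extracted by restricting the Emerton--Kisin correspondence recalled above to the full subcategories cut out by the lisse (resp.~$\cal O_Y$-locally free) conditions, once one checks that these subcategories correspond. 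The key local input is that after choosing a Frobenius lift \'etale-locally on $Y$, the Artin--Schreier-style operator $\psi - 1$ is \'etale-surjective on sections of a free unit $F$-crystal, so that rank--$r$ local freeness of $\cal E$ translates directly to rank--$r$ local constancy of $\cal E^{\psi = 1}$.

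Next, I would promote both sides to $A$-linear objects. An \'etale $A$-sheaf on $Y$ locally isomorphic to $\underline A^{\oplus r}$ is the same as an $\underline A$-module sheaf that is \'etale-locally free of rank $r$, while a rank--$r$ vector bundle on $S \underset{W_n(\mathbb F_q)}{\times} Y$ with connection along $Y$ and unit Frobenius structure is the same as a sheaf of $A \otimes_{W_n(\mathbb F_q)} \cal O_Y$-modules, Zariski-locally free of rank $r$, equipped with a $W_n(k)$-linear integrable connection along $Y$ and an identification $\op{Fr}_Y^* \cal E \xrightarrow{\sim} \cal E$. Because the Katz equivalence of the first step is $W_n(\mathbb F_q)$-linear and compatible with passage to endomorphism algebras, applying it to the $A$-action yields the desired $A$-linear equivalence; functoriality in $S$ is automatic from the naturality of the construction.

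The main obstacle will be matching the local freeness and rank conditions on the two sides after the $A$-upgrade. This I would verify stalk-by-stalk: at a geometric point $\bar y \to Y$, the correspondence identifies the $A$-module of stalks of the \'etale sheaf with the $A$-module $\cal E_{\bar y}^{\psi = 1}$. The standard application of Lang's theorem to the semilinear equation $\psi(v) = v$ (now with $A$-coefficients, using that $A$ is finite over the complete local ring $W_n(\mathbb F_q)$) shows that rank--$r$ freeness of $\cal E$ over $A \otimes_{W_n(\mathbb F_q)} \cal O_{Y, \bar y}$ is equivalent to rank--$r$ freeness of the stalk over $A$. Once this compatibility is established, the two descriptions agree and the lemma follows.
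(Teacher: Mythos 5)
Your plan is to reduce to the case $A = W_n(\mathbb F_q)$ (the classical Katz equivalence) and then pass to $A$-module objects on both sides. This is a genuinely different route from the paper, which mimics Katz's argument directly for arbitrary $A$: it defines $F : \cal F \leadsto \cal F \otimes_{W_n(\mathbb F_q)} \cal O_Y$, checks that the connection datum can be dropped in favor of the Frobenius datum via the canonical connection on $(\op{Fr}_Y^*)^N\cal M$, proves full faithfulness by a direct computation of $\varphi$-fixed points, and proves essential surjectivity by induction on $n$ that reduces each lifting step to solving Artin--Schreier equations over $Y_1$.

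There is, however, a real gap in your reduction. When $A$ is not flat over $W_n(\mathbb F_q)$, an \'etale $A$-sheaf locally isomorphic to $\underline A^{\oplus r}$ is \emph{not} a locally constant $W_n(\mathbb F_q)$-sheaf equipped with an $A$-action (it fails to be locally free over $W_n(\mathbb F_q)$), and likewise a rank--$r$ vector bundle over $A \otimes_{W_n(\mathbb F_q)}\cal O_Y$ is \emph{not} an $\cal O_Y$-vector bundle with an $A$-action. So neither side of your category (a) or (b) lives inside the base-case Katz categories, and "promoting to $A$-module objects" cannot be done formally within those categories. You would have to work inside a strictly larger ambient equivalence --- as you note, the Emerton--Kisin correspondence is a natural candidate --- but then the burden shifts entirely to identifying the two full subcategories corresponding to conditions (a) and (b), and your stalk-by-stalk argument is too thin to carry it. In particular, the appeal to "Lang's theorem" for the semi-linear equation $\psi(v) = v$ over $A \otimes_{W_n(\mathbb F_q)}\cal O_{Y,\bar y}$ only covers the case $n = 1$ (where Lang/Artin--Schreier over the perfection of the residue field applies); for $n > 1$ one must actually lift a $\varphi$-fixed basis modulo $p^{n-1}$ to one modulo $p^n$, and the obstruction to doing so is exactly the $r^2$ Artin--Schreier equations that the paper solves in its inductive step. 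Your invocation of the five lemma for the base case is also unclear: the claim to be established is an equivalence of abelian categories, not an isomorphism of groups fitting into a diagram, and Katz's Proposition 4.1.1 already handles coefficients in $W_n$ directly, so no d\'evissage along $0 \to W_{n-1} \to W_n \to \mathbb F_q \to 0$ is needed there. As written, your proposal correctly locates the key local input (solvability of $\psi(v) = v$), but does not supply the inductive lifting argument, and the reduction to $A = W_n(\mathbb F_q)$ is not sound as stated.
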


\noindent
Here, the symbol $\cal D^{(0)}$ stands for the level--$0$ differential operators on $S\underset{W_n(\mathbb F_q)}{\times}Y$ relative to $S\underset{W_n(\mathbb F_q)}{\times}W_n(k)$.

\subsubsection{} The case $A=W_n(\mathbb F_q)$ is essentially Proposition 4.1.1 of \emph{loc.cit.}, the only difference being that Katz's equivalence uses a global lift of the Frobenius on $Y$ whereas we use $\cal D^{(0)}$-modules and their canonical Frobenius pullbacks. We will explain how the proof of the cited Proposition can be adapted to yield a proof of Lemma \ref{lem-katz-equiv}.

\subsubsection{} One first defines a functor $F$ from (a) to (b). Let $\cal F$ be an object of the category (a). By passing to an \'etale cover of $Y$, we may assume that $\cal F$ is trivialized. We will then construct an object of (b) while remembering its descent datum. As a quasi-coherent $\cal O_Y$-module, we set:\footnote{We note that since $k$ is a perfect field, $W_n(k)$ is flat over $W_n(\mathbb F_q)$. Thus $\cal O_Y$ is also flat over $W_n(\mathbb F_q)$ and the tensor product can be safely taken in the classical sense.}
$$
\cal E := \cal F \underset{W_n(\mathbb F_q)}{\otimes}\cal O_Y.
$$
The integrable connection and Frobenius structure on $\cal E$ are both defined by actions on the $\cal O_Y$-factor. Finally, the $A$-module structure on $\cal F$ induces an $A$-action on $\cal E$. This turns $\cal E$ into a quasi-coherent sheaf on $S\underset{W_n(\mathbb F_q)}{\times}Y$, which is locally free of rank $r$.

\subsubsection{}
To show that this functor is an equivalence is a local question on $Y$. Hence we may assume that $Y$ is equipped with a lift $\op{Fr}_Y$ of the $q$th power Frobenius endomorphism on its special fiber. We then define another category:
\begin{enumerate}[(b')]
	\item vector bundles $\cal E$ on $S\underset{W_n(\mathbb F_q)}{\times}Y$ equipped with an isomorphism $\psi : \op{Fr}_Y^*\cal E \xrightarrow{\sim} \cal E$.
\end{enumerate}

\begin{claim}
The forgetful functor from (b) to (b') is an equivalence.
\end{claim}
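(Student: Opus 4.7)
The plan is to prove that for any $(\cal E, \psi)$ in (b'), the Frobenius structure $\psi$ uniquely determines an integrable connection $\nabla$ on $\cal E$ along $Y$ (relative to $W_n(k)$) making $\psi$ horizontal as a $\cal D^{(0)}$-module map. This is essentially the argument of Katz's Proposition 4.1.1, and my task is to adapt it to a general finite $W_n(\mathbb F_q)$-algebra $A$ and to Berthelot's canonical Frobenius pullback in place of the lift-dependent one.

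The key local observation driving the argument is as follows. Locally, choose étale coordinates $t_1, \ldots, t_d$ on $Y$ over $W_n(k)$ so that $\op{Fr}_Y(t_i) = t_i^q$. Then the $n$-fold iterate $\op{Fr}_Y^n$ sends $t_i \mapsto t_i^{q^n}$, and $d(\op{Fr}_Y^n)(t_i) = q^n\,t_i^{q^n-1}\,dt_i$ vanishes in $W_n$. Consequently, for any vector bundle $\cal F$ on $S\underset{W_n(\mathbb F_q)}{\times} Y$, the $n$-fold plain pullback $(\op{Fr}_Y^n)^*\cal F$ carries a canonical integrable connection $\nabla_{\op{can}}$, defined locally by $\nabla_{\op{can}}(f \otimes e) = df \otimes e$; well-definedness is precisely the identity $d(\op{Fr}_Y^{n*}g)\otimes e = \op{Fr}_Y^{n*}(dg)\cdot d\op{Fr}_Y^n \otimes e = 0$, and integrability follows from $d^2 = 0$.

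First I would construct the candidate connection. Given $(\cal E, \psi)$, form the iterated isomorphism $\psi^{(n)} : (\op{Fr}_Y^n)^* \cal E \xrightarrow{\sim} \cal E$ by composing $\psi, \op{Fr}_Y^*\psi, \ldots, (\op{Fr}_Y^{n-1})^*\psi$, and transport $\nabla_{\op{can}}$ along $\psi^{(n)}$ to produce $\nabla$ on $\cal E$. Integrability is inherited, and the horizontality of $\psi$ with respect to $\nabla$ follows from comparing the canonical connections on $(\op{Fr}_Y^n)^*\cal E$ and $(\op{Fr}_Y^{n+1})^*\cal E$ via $\op{Fr}_Y^*\psi^{(n)}$. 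For uniqueness: any integrable connection $\nabla'$ on $\cal E$ making $\psi$ horizontal pulls back under $(\op{Fr}_Y^n)^*$ to a connection whose local matrix equals $\op{Fr}_Y^{n*}(A)\cdot d\op{Fr}_Y^n = 0$, which must coincide with $\nabla_{\op{can}}$; transporting back via $\psi^{(n)}$ forces $\nabla' = \nabla$. Uniqueness then automatically makes the local constructions glue across overlaps and agree with Berthelot's lift-independent $\cal D^{(0)}$-structure.

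Extending from Katz's case $A = W_n(\mathbb F_q)$ to a general finite $W_n(\mathbb F_q)$-algebra $A$ is essentially automatic: the connection is taken only along $Y$, the construction is $\cal O_S$-linear throughout, and the local computation is valid over $S\underset{W_n(\mathbb F_q)}{\times} Y$ in exactly the same way. The hard part I anticipate is the matching with Berthelot's formalism—namely, verifying that the $\cal D^{(0)}$-structure on $\cal E$ obtained by transport along $\psi^{(n)}$ is the same as the one which, under Berthelot's canonical pullback, recovers $\psi$ itself as a morphism of $\cal D^{(0)}$-modules. This check should reduce to a direct comparison in coordinates, but is where one must be careful to obtain canonicity rather than merely a local result.
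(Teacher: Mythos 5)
This is essentially the same approach the paper takes: both exploit the canonical integrable connection $\nabla_{\op{can}}$ on a sufficiently high Frobenius pullback $(\op{Fr}_Y^*)^N\cal E$ and transport it to $\cal E$ along the iterated structure map $\psi^N$. The step you flag as the ``hard part''---checking that $\psi$ itself, and not merely $\psi^N$, is a morphism of $\cal D^{(0)}$-modules---is exactly what the paper settles with its short commutative-square argument (three of the four arrows are already known to be $\cal D^{(0)}$-equivariant and the verticals are isomorphisms), which your phrase ``comparing the canonical connections\ldots via $\op{Fr}_Y^*\psi^{(n)}$'' gestures at but does not spell out.
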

\begin{proof}
Recall that for an integer $N\gg 0$ (depending only on $Y$) and any $\cal O_Y$-module $\cal M$, the quasi-cherent sheaf $(\op{Fr}_Y^*)^N\cal M$ acquires a \emph{canonical} connection $\nabla_{\op{can}}$, defined functorially in $\cal M$. Furthermore, if $\cal M$ already admits a connection, then $\nabla_{\op{can}}$ identifies with its pullback. This shows that an object $\cal E$ in (b') acquires a connection, making the isomorphism:
$$
\psi^N : (\op{Fr}_Y^*)^N \cal E \xrightarrow{\sim} \cal E
$$
$\cal D^{(0)}$-equivariant. To show that this connection upgrades $\cal E$ to an object of (b), we must show that the structural morphism $\psi$ is itself $\cal D^{(0)}$-equivariant. However, this follows from considering the following commutative diagram:
$$
\xymatrix@R=1.5em{
	(\op{Fr}_Y^*)^{N+1}\cal E \ar[r]^-{(\op{Fr}_Y^*)^N\psi}\ar[d]^{\op{Fr}_Y^*\psi^N} & (\op{Fr}_Y^*)^N \cal E \ar[d]^{\psi^N} \\
	\op{Fr}_Y^*\cal E \ar[r]^-{\psi} & \cal E
}
$$
where all arrows besides $\psi$ are already known to be $\cal D^{(0)}$-equivariant.
\end{proof}

\subsubsection{} Thus we have reduced to showing the functor $F'$ from (a) to (b'), given by composing $F$ with the forgetful functor, is an equivalence. It admits an \emph{a priori} partially defined right adjoint $(F')^R$, given by sending $(\cal E, \psi)$ (regarded as a quasi-coherent $\cal O_Y$-module with Frobenius structure $\psi$) to the fixed points of the Frobenius-twisted linear morphism:
$$
\varphi : \cal E \hookrightarrow \op{Fr}_Y^*\cal E \xrightarrow{\psi} \cal E.
$$
We denote the resulting \'etale $W_n(\mathbb F_q)$-module by $\cal E^{\varphi\text{-fixed}}$; the $A$-action on $\cal E$ induces one on $\cal E^{\varphi\text{-fixed}}$, making it an \'etale $A$-sheaf. This functor is well-defined precisely when $\cal E^{\varphi\text{-fixed}}$ is locally isomorphic to a product of the constant $A$-sheaf.

\subsubsection{} The following claim shows that $(F')^R$ is well-defined on the essential image of $F'$ and that $F'$ is fully faithful.

\begin{claim}
For any finite, \'etale $W_n(\mathbb F_q)$-sheaf $\cal F$, the following natural map is bijective:
$$
\cal F \rightarrow (\cal F \underset{W_n(\mathbb F_q)}{\otimes} \cal O_Y)^{\varphi\text{-fixed}}.
$$
\end{claim}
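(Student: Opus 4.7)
My plan is to verify the claim étale-locally on $Y$ and then reduce by additivity in $\cal F$ to a cyclic case, which I would handle by induction on length. Since the assertion is the bijectivity of a sheaf morphism, it suffices to check on étale stalks; let $R$ denote a strict Henselization of $\cal O_Y$ at an arbitrary geometric point. The stalk of $\cal F$ is then a finite $W_n(\mathbb F_q)$-module $M$, and the assertion reduces to showing that $M \to (M \otimes_{W_n(\mathbb F_q)} R)^{\op{Fr}_R = \op{id}}$ is bijective. Both sides are additive in $M$, and any finite $W_n(\mathbb F_q)$-module is a direct sum of cyclic modules $W_i(\mathbb F_q)$ with $i \le n$, so it suffices to treat $M = W_i(\mathbb F_q)$ for each such $i$, i.e.\ to prove
\[
W_i(\mathbb F_q) \xrightarrow{\sim} (R/p^iR)^{\op{Fr}_R = \op{id}}.
\]

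I would proceed by induction on $i$. For the base case $i=1$: $R/pR$ is a strictly Henselian local ring whose residue field $\bar{\kappa}$ is separably closed over $k$, and $\op{Fr}_R$ modulo $p$ is the $q$-th power map. Any $f$ with $f^q = f$ has reduction in $\bar{\kappa}^{\op{Fr}_q=\op{id}} = \mathbb F_q$; subtracting its Teichm\"uller lift reduces to $f \in \mathfrak{m}$, and the identity $f(f^{q-1} - 1) = 0$ with $1 - f^{q-1}$ invertible forces $f = 0$, giving $(R/pR)^{\op{Fr}=\op{id}} = \mathbb F_q$. For the inductive step, apply $\op{ker}(\op{Fr}_R - \op{id})$ to the short exact sequence
\[
0 \to p^{i-1}R/p^iR \to R/p^iR \to R/p^{i-1}R \to 0,
\]
and use multiplication by $p^{i-1}$ to identify $p^{i-1}R/p^iR \cong R/pR$ (a Frobenius-equivariant isomorphism, available since $R$ is flat over $W_n(k)$). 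The base case and the inductive hypothesis then produce a left-exact sequence $0 \to \mathbb F_q \to (R/p^iR)^{\op{Fr}=\op{id}} \to W_{i-1}(\mathbb F_q)$.

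The hard step is establishing surjectivity onto $W_{i-1}(\mathbb F_q)$. Given $\bar a \in (R/p^{i-1}R)^{\op{Fr}=\op{id}}$ and any lift $a \in R/p^iR$, the Frobenius obstruction $\op{Fr}_R(a) - a$ lies in $p^{i-1}R/p^iR \cong R/pR$ and corresponds to some $\bar b$; correcting $a$ by $p^{i-1}c$ to a fixed lift requires solving the Artin--Schreier equation $c^q - c = -\bar b$ in $R/pR$. I would resolve this by observing that $T^q - T + \bar b$ is separable (its derivative is $-1$) and hence cuts out a finite étale $R/pR$-algebra, which splits as a product of copies of $R/pR$ because $R/pR$ inherits the strict Henselianness of $R$. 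This forces $|(R/p^iR)^{\op{Fr}=\op{id}}| = q \cdot q^{i-1} = q^i = |W_i(\mathbb F_q)|$, so the natural injection $W_i(\mathbb F_q) \hookrightarrow (R/p^iR)^{\op{Fr}=\op{id}}$ (coming from $W_n(\mathbb F_q) \to R$ via faithful flatness, and landing in fixed points because $\op{Fr}_R$ restricts to $\op{Fr}_{W_n(k)}$ on the constants) must be a bijection, completing the induction.
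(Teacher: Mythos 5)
Your proof is correct and follows essentially the same skeleton as the paper's: decompose the finite $W_n(\mathbb F_q)$-module via the structure theorem over the PID into pieces supported at each level $j\le n$, and reduce to the assertion that $W_j(\mathbb F_q)$ is the $\varphi$-fixed subsheaf of $\cal O_{Y_j}$. The paper states this last identification without proof; you supply it by passing to stalks at a geometric point, where $R$ is strictly Henselian, and then arguing by induction on $j$ using flatness of $R$ over $W_n(k)$ to split off $p^{j-1}R/p^jR\cong R/pR$ and the separability of the Artin--Schreier polynomial $T^q-T+\bar b$ (together with strict Henselianness) to lift fixed elements --- a genuine addition of detail, and a clean one.
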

\begin{proof}
By the classification of finite modules over a PID (applied to $\cal F$ viewed as a $W(\mathbb F_q)$-module), we obtain an isomorphism $\cal F\xrightarrow{\sim} \bigoplus_j \cal F_j$ where $\cal F_j$ is a \emph{free} $W_j(\mathbb F_q)$-module for $1\le j\le n$. Therefore, we have:
$$
\cal F \underset{W_n(\mathbb F_q)}{\otimes} \cal O_Y \xrightarrow{\sim} \bigoplus_{1\le j\le n} \cal F_j \underset{W_j(\mathbb F_q)}{\otimes} \cal O_{Y_j}.
$$
Here, $Y_j$ denotes the base change of $Y$ to $W_j(\mathbb F_q)$. The result follows, since $W_j(\mathbb F_q)$ identifies with the $\varphi$-fixed elements of $\cal O_{Y_j}$.
\end{proof}

\subsubsection{} Finally, we show that $F'$ is essentially surjective. Given any $(\cal E, \psi)$ in (b'), this amounts to the following problem:
\begin{itemize}
	\item \'Etale locally on $Y$, construct a basis $\underline h$ of $\cal E$ (over $S\underset{W_n(\mathbb F_q)}{\times} Y$) such that $\varphi$ fixes $\underline h$.
\end{itemize}
The argument is then essentially copying Katz's proof. We procede by induction on $n\ge 1$.

\subsubsection{} For $n=1$, the vector bundle $\cal E$ is \emph{also} locally free as an $\cal O_Y$-module. By \emph{loc.cit.}, we may functorially assign a finite, locally constant \'etale $\mathbb F_q$-sheaf $\cal F$ on $Y$ together with an isomorphism $\cal F \underset{\mathbb F_q}{\otimes} \cal O_Y\xrightarrow{\sim} \cal E$. The $A$-action on $\cal E$ is inherited by $\cal F$, making the above isomorphism $A$-equivariant. The hypothesis that $\cal E$ is locally free on $S \underset{\mathbb F_q}{\times} Y$ then shows that $\cal F$ is locally constant as an $A$-module. This gives a $\varphi$-fixed basis of $\cal E$.\footnote{In fact, it proves the essential surjectivity directly.}

\subsubsection{} Assuming that a $\varphi$-fixed basis exists for $n-1$, we will lift it to a basis $\underline h$ of $\cal E$ over $S\underset{W_n(\mathbb F_q)}{\times} Y$; it is not necessarily fixed by $\varphi$. In order to adjust it into a $\varphi$-fixed basis, we apply the argument in p.145 of \emph{loc.cit.}. It reduces the problem to solving $r^2$ equations in $A_1 
\underset{\mathbb F_q}{\otimes} \cal O_Y$ (where $A_1$ is the reduction of $A$ mod $p$) of the following type, with $\delta$'s being given:
\begin{equation}
\label{eq-artin-schreier}
\varphi(e) + \delta = e.
\end{equation}
Here, $\varphi$ denotes the Frobenius action on the $\cal O_{Y_1}$-factor. We write $\delta = \sum_{i} a_i \otimes f_i$. The equation \eqref{eq-artin-schreier} then reduces to finding solutions $x_i \in \cal O_{Y_1}$ to the equations $x_i^q + f_i = x_i$ for all $i$. Indeed, having the $x_i$'s, we may then set $e := \sum_i a_i\otimes x_i$. These Artin--Schreier equations are simultaneously solved over a finite \'etale cover of $Y_n$. \qed(Lemma \ref{lem-katz-equiv})

\subsubsection{}
\label{sec-katz-to-ek} The particular case of Lemma \ref{lem-katz-equiv} where $A=W_n(\mathbb F_q)$ is captured by the Emerton--Kisin Riemann--Hilbert correspondence \eqref{eq-rh} in the following manner. We let $\op{LocSys}(Y)$ and $\op{Conn}(Y)^{\op{Fr}^*}$ denote the two categories appearing in Lemma \ref{lem-katz-equiv} for $A = W_n(\mathbb F_q)$. Then the following diagram commutes:

$$
\xymatrix@R=1.5em@C=5em{
	\op{Conn}(Y)^{\op{Fr}^*} \ar[r]^-{\text{Lemma \ref{lem-katz-equiv}}}\ar[d]^{\mathbb D} & \mathrm{LocSys}(Y) \ar[dd] \\
	(\op{Conn}(Y)^{\op{Fr}^*})^{\op{op}} \ar[d] & \\
	D_{\op{lfg-u}, \circ}^b(\cal D_{F,Y}^{(0)}\Mod^{\heartsuit})^{\op{op}} \ar[r]^-{\op{Sol}[-\dim(Y)]} & W_n(\mathbb F_q)\Shv_c(Y)
}
$$
Here, the unlabeled vertical arrows are the tautological inclusions; the functor $\mathbb D$ is the operation of taking the monoidal dual $\cal E\leadsto \cal E^{\vee}$ with its induced connection and Frobenius-structure.

\begin{rem}
The duality functor $\mathbb D$ cannot be extended to $D_{\op{lfg-u}, \circ}^b(\cal D_{F,Y}\Mod^{\heartsuit})$, so we do not obtain a covariant Riemann--Hilbert equivalence. However, a version of covariant Riemann--Hilbert equivalence \emph{does} exist. It appeals to a different notion than $F$-crystals, and is the subject of the work of Bhatt--Lurie \cite{BL17}.
\end{rem}

\subsection{Categorical fixed points}
\label{sec-cat-fix}

\subsubsection{} Let $\cal C$ be a stable $\infty$-category together an endomorphism $F : \cal C\rightarrow\cal C$. The \emph{fixed point $\infty$-category} $\cal C^F$ is defined as the equalizer of $F$ with $\op{id}_{\cal C}$. Equivalenty, $\cal C^F$ is a fiber product of stable $\infty$-categories:
$$
\xymatrix@R=1.5em{
	\cal C^F \ar[r]\ar[d] & \cal C \ar[d]^{\Delta} \\
	\cal C \ar[r]^-{(F,\op{id})} & \cal C\times\cal C
}
$$
An object of $\cal C^F$ is an object $c\in\cal C$ together with an isomorphism $Fc\xrightarrow{\sim} c$.

\subsubsection{}
\label{sec-laxed-fixed-points}
More generally, one can define a lax version $\cal C^{F\rightarrow\op{id}}$ of objects $c\in\cal C$ together with a morphism $Fc\rightarrow c$, so that $\cal C^F$ is the full $\infty$-subcategory of $\cal C^{F\rightarrow\op{id}}$ where the structural morphism is an isomorphism. Formally, $\cal C^{F\rightarrow\op{id}}$ is the fiber product:
$$
\xysmall{
	\cal C^{F\rightarrow\op{id}} \ar[r]\ar[d] & \cal C^{\Delta^1} \ar[d]^{\op{can}} \\
	\cal C \ar[r]^-{(F, \op{id})} & \cal C\times\cal C
}
$$
where $\cal C^{\Delta^1}:=\op{Funct}(\Delta^1, \cal C)$ is the functor $\infty$-category from the interval, and the canonical map sends $d\rightarrow c$ to $(d,c)$.

\begin{rem}
The $\infty$-category $\cal C^{F\rightarrow\op{id}}$ has an obvious variant $\cal C^{\op{id}\rightarrow F}$ consisting of objects $c\in\cal C$ together with a morphism $c\rightarrow Fc$.
\end{rem}

\subsubsection{} The following fact shows how to calculate Hom-spaces in $\cal C^{F\rightarrow\op{id}}$ (hence also in $\cal C^F$).

\begin{lem}
\label{lem-fixed-point-hom}
Given objects $Fc_1 \xrightarrow{\psi_1} c_1$, $Fc_2\xrightarrow{\psi_2} c_2$ in $\cal C^{F\rightarrow\op{id}}$, their $\op{Hom}$-space in $\cal C^{F\rightarrow\op{id}}$ is the equalizer of:
$$
\xymatrix{
	 \op{Hom}_{\cal C}(c_1, c_2) \ar@<0.5ex>[r]^-{\psi_2\circ F(-)} \ar@<-0.5ex>[r]_-{(-)\circ\psi_1} & \op{Hom}_{\cal C}(Fc_1, c_2).
}
$$
\end{lem}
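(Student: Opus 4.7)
My plan is to unwind the definition of $\cal C^{F\rightarrow\op{id}}$ as a fiber product and use the fact that mapping spaces in a fiber product of $\infty$-categories are computed as fiber products of mapping spaces. Specifically, from the defining pullback square
$$
\cal C^{F\rightarrow\op{id}} = \cal C^{\Delta^1} \underset{\cal C\times\cal C}{\times} \cal C,
$$
I would obtain
$$
\op{Hom}_{\cal C^{F\rightarrow\op{id}}}\bigl((c_1,\psi_1),(c_2,\psi_2)\bigr) \simeq \op{Hom}_{\cal C^{\Delta^1}}(\psi_1,\psi_2) \underset{\op{Hom}_{\cal C}(Fc_1,Fc_2)\times\op{Hom}_{\cal C}(c_1,c_2)}{\times} \op{Hom}_{\cal C}(c_1,c_2),
$$
where the right vertical map sends $g$ to $(Fg, g)$.

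Next, I would identify $\op{Hom}_{\cal C^{\Delta^1}}(\psi_1,\psi_2)$ with the space of commutative squares
$$
\xymatrix@R=1.5em{
Fc_1 \ar[r]^-{\alpha} \ar[d]_{\psi_1} & Fc_2 \ar[d]^{\psi_2} \\
c_1 \ar[r]^-{g} & c_2
}
$$
i.e., the space of pairs $(\alpha, g)$ together with a homotopy $\psi_2\circ\alpha \simeq g\circ\psi_1$; equivalently, this is the pullback of $\op{Hom}_{\cal C}(Fc_1,Fc_2)\times\op{Hom}_{\cal C}(c_1,c_2)$ over $\op{Hom}_{\cal C}(Fc_1, c_2)$ via the maps $(\alpha,g)\mapsto (\psi_2\circ\alpha, g\circ\psi_1)$.

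Substituting this description into the previous fiber product and using the pasting law for pullbacks then collapses the three-fold fiber product: the condition $\alpha = Fg$ forced by the outer pullback allows me to eliminate the $\alpha$-coordinate, leaving precisely the space of morphisms $g: c_1\to c_2$ equipped with a homotopy between $\psi_2\circ Fg$ and $g\circ\psi_1$. This is the equalizer stated in the lemma.

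The only subtlety is bookkeeping of homotopies in the $\infty$-categorical fiber products—making sure that ``the space of commutative squares'' really is the mapping space in $\cal C^{\Delta^1}$, and that the pasting of pullbacks gives the claimed equalizer diagram on the nose. Both facts are standard consequences of \cite[\S1]{Lu09} on functor categories and pullbacks in $\infty$-categories, so I would simply cite them rather than reprove them.
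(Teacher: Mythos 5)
Your proposal is correct and follows essentially the same route as the paper: both reduce to the fact that mapping spaces in a limit of $\infty$-categories are computed as the corresponding limit of mapping spaces, then identify $\op{Hom}_{\cal C^{\Delta^1}}(\psi_1,\psi_2)$ with the fiber product $\op{Hom}_{\cal C}(Fc_1,Fc_2)\underset{\op{Hom}_{\cal C}(Fc_1,c_2)}{\times}\op{Hom}_{\cal C}(c_1,c_2)$ and paste. The only cosmetic difference is that the paper cites the twisted-arrow-category formula for natural transformations to justify the description of mapping spaces in $\cal C^{\Delta^1}$, while you describe it directly as the space of commutative squares; the content is the same.
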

\begin{proof}
The Hom-space in a limit of $\infty$-categories is the limit of Hom-spaces. Therefore, the problem reduces to showing that Hom-spaces in $\cal C^{\Delta^1}$ identify with the fiber product:
$$
\xysmall{
	\op{Hom}_{\cal C^{\Delta^1}}(d_1\rightarrow c_1, d_2\rightarrow c_2) \ar[r]\ar[d] & \op{Hom}_{\cal C}(d_1, d_2) \ar[d] \\
	\op{Hom}_{\cal C}(c_1, c_2) \ar[r] & \op{Hom}_{\cal C}(d_1, c_2)
}
$$
This is in turn a particular case of the calculation of natural transformations as limit over the twisted arrow category.
\end{proof}

\subsubsection{} Suppose $\cal C$ is equipped with a $t$-structure and $F$ is \emph{$t$-exact}. Then $\cal C^{F\rightarrow\op{id}}$ inherits a $t$-structure, for which the forgetful functor to $\cal C$ is $t$-exact.

\begin{lem}
\label{lem-right-complete}
If $\cal C$ is right (resp.~left) complete with respect to the $t$-structure, then the same holds for $\cal C^{F\rightarrow\op{id}}$.
\end{lem}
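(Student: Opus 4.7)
The plan is to propagate the right-completeness of $\cal C$ through the fiber-product description $\cal C^{F\rightarrow\op{id}} \simeq \cal C^{\Delta^1} \underset{\cal C\times\cal C}{\times} \cal C$ given above, using that $F$ commutes with truncation functors up to canonical equivalence.

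First, I would unwind the induced $t$-structure on $\cal C^{F\rightarrow\op{id}}$: an object $\psi : Fc\rightarrow c$ lies in the coconnective part iff $c$ does, and its truncation is computed as $\tau^{\leq n}\psi = (F\tau^{\leq n}c\rightarrow\tau^{\leq n}c)$. This makes sense because $t$-exactness of $F$ produces a canonical identification $F\tau^{\leq n}c\simeq\tau^{\leq n}Fc$; the $t$-structure axioms follow levelwise from those of $\cal C$ and the fact that the forgetful functor is tautologically $t$-exact.

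Given $\psi : Fc\rightarrow c$, I would form the Postnikov tower $\{\tau^{\leq n}\psi\}_{n}$ and check that its limit in $\cal C^{F\rightarrow\op{id}}$ recovers $\psi$. Limits in the fiber product are computed component-wise---and in $\cal C^{\Delta^1}$ end-pointwise---so the candidate limit is an arrow $\lim_n F\tau^{\leq n}c\rightarrow\lim_n\tau^{\leq n}c$ whose source needs to be identified with $F$ applied to its target. Right-completeness of $\cal C$ applied to $c$ gives $\lim_n\tau^{\leq n}c\simeq c$; applied to $Fc$, and combined with the identifications $F\tau^{\leq n}c\simeq\tau^{\leq n}Fc$ from $t$-exactness, it gives $\lim_n F\tau^{\leq n}c\simeq Fc$. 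These two identifications assemble into an equivalence $\lim_n\tau^{\leq n}\psi\simeq\psi$ inside $\cal C^{F\rightarrow\op{id}}$, yielding right-completeness of the lax fixed-point category.

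The only real subtlety is that $F$ is not assumed to preserve arbitrary limits, so \emph{a priori} the Postnikov limit of the underlying arrows in $\cal C^{\Delta^1}$ might not descend to the fiber product. What makes things work is that right-completeness of $\cal C$ is applied twice---once to $c$ and once to $Fc$---and these two applications, combined with $t$-exactness of $F$, exhibit both $\lim_n F\tau^{\leq n}c$ and $F(\lim_n\tau^{\leq n}c)$ as canonically isomorphic to $Fc$. This compatibility is the one place where the hypotheses interact nontrivially; everything else is bookkeeping with the pullback square.
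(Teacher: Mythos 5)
Your argument is correct but takes a genuinely different route from the paper's. You work objectwise: translate right completeness into convergence of Postnikov towers, build the candidate tower $\{\tau^{\leq n}\psi\}$ in $\cal C^{F\rightarrow\op{id}}$, and check that its limit recovers $\psi$, which forces you to confront (and correctly resolve) the fact that $(F,\op{id}_{\cal C}): \cal C\rightarrow\cal C\times\cal C$ need not preserve arbitrary limits---your observation that $t$-exactness plus right completeness applied to both $c$ and $Fc$ makes $F$ preserve the particular Postnikov limit is exactly the right fix. The paper instead stays entirely at the level of $\infty$-categories: it uses the definition of right completeness as the statement that $\cal C\rightarrow\lim_n\cal C^{\le n}$ is an equivalence, observes (via $t$-exactness of $F$) that $(\cal C^{F\rightarrow\op{id}})^{\le n}$ is the lax fixed-point category of $\cal C^{\le n}$, and concludes by commuting two limits of categories, since $(\cal C,F)\leadsto\cal C^{F\rightarrow\op{id}}$ is itself a finite limit. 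The paper's route is shorter and bypasses the limit-preservation subtlety you had to handle explicitly, because all commutations happen at the level of diagrams of categories rather than diagrams of objects; your route is more concrete and makes visible exactly where the hypotheses are used, at the cost of an implicit appeal to the equivalence between the categorical-limit and Postnikov-tower formulations of right completeness (harmless here, but worth flagging if you wanted the argument to be self-contained).
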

\begin{proof}
Right (resp.~left) completeness of $\cal C$ means that the canonical functor $\cal C \rightarrow \lim_{n\rightarrow\infty} \cal C^{\le n}$ (resp.~$\cal C \rightarrow \lim_{n\rightarrow -\infty}\cal C^{\ge n}$), where the connecting functors $\cal C^{\le n+1} \rightarrow \cal C^{\le n}$ (resp.~$\cal C^{\ge n-1} \rightarrow \cal C^{\ge n}$) are given by truncations $\tau^{\le n}$ (resp.~$\tau^{\ge n}$), is an equivalence. Since $(\cal C^{F \rightarrow \op{id}})^{\le n}$ is identified with the fiber product:
$$
\xysmall{
	(\cal C^{F\rightarrow \op{id}})^{\le n} \ar[r]\ar[d] & (\cal C^{\le n})^{\Delta^1} \ar[d]^{\op{can}} \\
	\cal C^{\le n} \ar[r]^-{(F, \op{id})} & \cal C^{\le n}\times\cal C^{\le n}
}
$$
in a way that is compatible with the truncation functors, we see that $\cal C^{F \rightarrow\op{id}}$ is identified with $\lim_{n\rightarrow\infty} (\cal C^{F\rightarrow\op{id}})^{\le n}$. Under the appropriate hypotheses, the same argument yields the left-completeness and the corresponding statements for $\cal C^F$.
\end{proof}

\subsection{$F$-crystals as categorial fixed points}
\label{sec-f-crys-fixed-points}

\subsubsection{} We return to the setting of \S\ref{sec-rh}. Note that the functor \eqref{eq-berth-pullback} enhances to a $W_n(\mathbb F_q)$-linear functor of $\infty$-categories:
\begin{equation}
\label{eq-fr-equiv}
\op{Fr}_Y^* : \cal D_Y^{(\nu)}\Mod \rightarrow \cal D_Y^{(\nu+1)}\Mod.
\end{equation}
Its composition with the forgetful functor to $\cal D_Y^{(\nu)}\Mod$ will still be denoted by $\op{Fr}_Y^*$. Let $\cal D_Y^{(\nu)}\Mod^{\op{Fr}^*}$ (resp.~$\cal D_Y^{(\nu)}\Mod^{\op{Fr}^*\rightarrow\op{id}}$) denote the (resp.~lax) $\op{Fr}_Y^*$-fixed point $\infty$-category of $\cal D_Y^{(\nu)}\Mod$. We note that the equivalence \eqref{eq-fr-equiv} shows that $\cal D_Y^{(v)}\Mod^{\op{Fr}^*}$ is independent of the level $\nu$, i.e., the forgetful functors:
$$
\cal D_Y\Mod^{\op{Fr}^*} \rightarrow \cdots \rightarrow \cal D_Y^{(1)}\Mod^{\op{Fr}^*} \rightarrow \cal D_Y^{(0)}\Mod^{\op{Fr}^*}
$$
are equivalences, with inverses given by $\op{Fr}_Y^*$.

\subsubsection{}
We note that $\op{Fr}_Y^*$ is $t$-exact with respect to the tautological $t$-structure on $\cal D_Y^{(\nu)}\Mod$; indeed, this is because $Y$ is smooth, so any local lift of the Frobenius map is flat. Thus $\cal D_Y^{(\nu)}\Mod^{\op{Fr}^*\rightarrow \op{id}}$ inherits a $t$-structure, for which the forgetful functor to $\cal D_Y^{(\nu)}\Mod$ is $t$-exact. The following is immediate:

\begin{lem}
\label{lem-frob-module-heart}
The heart $(\cal D_Y^{(\nu)}\Mod^{\op{Fr}^*\rightarrow\op{id}})^{\heartsuit}$ identifies with $\cal D_{F,Y}^{(\nu)}\Mod^{\heartsuit}$. \qed
\end{lem}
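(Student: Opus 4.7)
The plan is to unwind the $t$-structure on the lax fixed-point category directly from its realization as a fiber product. Recall that $\cal D_Y^{(\nu)}\Mod^{\op{Fr}^*\to\op{id}}$ sits in a Cartesian square with $\cal D_Y^{(\nu)}\Mod^{\Delta^1}$ over $\cal D_Y^{(\nu)}\Mod\times\cal D_Y^{(\nu)}\Mod$, and the functor-category $\cal D_Y^{(\nu)}\Mod^{\Delta^1}$ carries its own $t$-structure in which an arrow $d\to c$ is connective (resp.~coconnective) iff both $d$ and $c$ are. Since $\op{Fr}_Y^*$ is $t$-exact (as noted just before the statement, because any local lift of Frobenius on the smooth scheme $Y$ is flat), pulling back the $t$-structure on the target across the fiber product produces a $t$-structure on $\cal D_Y^{(\nu)}\Mod^{\op{Fr}^*\to\op{id}}$ for which the forgetful functor is $t$-exact, as asserted in the paragraph preceding the lemma.

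Step one is to identify the objects of the heart. An object of $\cal D_Y^{(\nu)}\Mod^{\op{Fr}^*\to\op{id}}$ is a pair $(c,\psi : \op{Fr}_Y^* c\to c)$. Under the $t$-structure above, such a pair lies in the heart iff both $\op{Fr}_Y^* c$ and $c$ lie in $\cal D_Y^{(\nu)}\Mod^{\heartsuit}$; by $t$-exactness of $\op{Fr}_Y^*$, this collapses to the single condition $c\in\cal D_Y^{(\nu)}\Mod^{\heartsuit}$. Thus the objects are precisely pairs $(c,\psi)$ with $c$ in the abelian category $\cal D_Y^{(\nu)}\Mod^{\heartsuit}$ and $\psi$ a morphism there, which matches the definition of $\cal D_{F,Y}^{(\nu)}\Mod^{\heartsuit}$ verbatim.

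Step two is to compare morphisms. Given objects $(c_1,\psi_1)$ and $(c_2,\psi_2)$ in the heart, Lemma \ref{lem-fixed-point-hom} expresses $\op{Hom}_{\cal C^{F\to\op{id}}}((c_1,\psi_1),(c_2,\psi_2))$ as the equalizer in spectra of the two maps $\op{Hom}_{\cal C}(c_1,c_2)\rightrightarrows\op{Hom}_{\cal C}(\op{Fr}_Y^* c_1,c_2)$ given by $\psi_2\circ\op{Fr}_Y^*(-)$ and $(-)\circ\psi_1$. Since $c_1,c_2,\op{Fr}_Y^* c_1$ all lie in the heart of $\cal D_Y^{(\nu)}\Mod$, the two input mapping spectra are concentrated in degree $0$ and compute the usual abelian $\Hom$-groups, so their equalizer computes exactly the subset of morphisms $f : c_1\to c_2$ in $\cal D_Y^{(\nu)}\Mod^{\heartsuit}$ intertwining $\psi_1$ and $\psi_2$. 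This is, by definition, the morphism set in $\cal D_{F,Y}^{(\nu)}\Mod^{\heartsuit}$.

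The whole argument is essentially a check that the fiber-product $t$-structure is well behaved; the only subtle point, and thus the closest thing to an obstacle, is confirming that objectwise conditions in each factor of the fiber product combine correctly so that ``in the heart'' means exactly ``$c\in\cal C^{\heartsuit}$ and $\psi$ a morphism in $\cal C^{\heartsuit}$''. After this, matching objects and morphisms with Lemma \ref{lem-fixed-point-hom} gives the identification \emph{on the nose}, so the ``$\qed$'' after the statement is justified without further computation.
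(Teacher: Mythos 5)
Your overall approach is the natural one, and since the paper offers no proof (it declares the lemma ``immediate'' and places $\qed$ directly after the statement), your unwinding of the fiber-product $t$-structure and Lemma \ref{lem-fixed-point-hom} is exactly what a full write-up would look like. Step one, identifying the objects of the heart with pairs $(c,\psi)$ where $c$ lies in $\cal D_Y^{(\nu)}\Mod^{\heartsuit}$, is correct: the levelwise $t$-structure on $\cal D_Y^{(\nu)}\Mod^{\Delta^1}$, together with the $t$-exactness of $\op{Fr}_Y^*$, collapses the two conditions on $\op{Fr}_Y^*c$ and $c$ to the single condition on $c$.

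Step two contains a genuine mis-statement. You write that, because $c_1$, $c_2$, and $\op{Fr}_Y^*c_1$ lie in the heart, the mapping spectra $\Hom_{\cal C}(c_1,c_2)$ and $\Hom_{\cal C}(\op{Fr}_Y^*c_1,c_2)$ are \emph{concentrated in degree $0$}. This is false: for $c_1, c_2 \in \cal C^{\heartsuit}$ the mapping spectrum has $\op H^0 = \Hom_{\cal C^{\heartsuit}}(c_1,c_2)$ but will in general carry nonvanishing higher cohomology, namely the $\Ext^i$-groups for $i>0$. What is true (and what you actually need) is that these spectra are \emph{coconnective}, i.e.\ vanish in negative cohomological degrees. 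That weaker statement already suffices: writing the equalizer of Lemma \ref{lem-fixed-point-hom} as the fiber of the difference map $\beta\colon \Hom_{\cal C}(c_1,c_2)\to\Hom_{\cal C}(\op{Fr}_Y^*c_1,c_2)$, the long exact sequence gives
$$
\op H^{-1}\Hom_{\cal C}(\op{Fr}_Y^*c_1,c_2)\;\longrightarrow\;\op H^0\,\op{fib}(\beta)\;\longrightarrow\;\op H^0\Hom_{\cal C}(c_1,c_2)\;\xrightarrow{\ \beta\ }\;\op H^0\Hom_{\cal C}(\op{Fr}_Y^*c_1,c_2),
$$
and the left-hand term vanishes by coconnectivity, so $\op H^0$ of the equalizer is precisely the kernel of $\beta$ on $\op H^0$-level, i.e.\ the set of $\psi$-intertwining morphisms in $\cal D_Y^{(\nu)}\Mod^{\heartsuit}$. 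Since morphisms in a heart are exactly $\op H^0$ of the mapping spectrum, this recovers the morphism set of $\cal D_{F,Y}^{(\nu)}\Mod^{\heartsuit}$ as desired. With this correction, your argument is complete; the conclusion is right, but the claimed discreteness of the mapping spectra should be replaced by coconnectivity.
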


\noindent
Furthermore, in order the check that an object of $\cal D_Y^{(\nu)}\Mod^{\op{Fr}^*\rightarrow\op{id}}$ belongs to the full $\infty$-subcategory $\cal D_Y^{(\nu)}\Mod^{\op{Fr}^*}$, it suffices to check that that its cohomology objects belongs to $\cal D_{F,Y}^{(\nu)}\Mod^{\heartsuit}_{\op u}$, i.e., are \emph{unit} $F$-crystals.

\subsubsection{} It follows from Lemma \ref{lem-frob-module-heart} that we have a canonically defined, $t$-exact functor:
\begin{equation}
\label{eq-ab-to-der}
D^+(\cal D_{F,Y}^{(\nu)}\Mod^{\heartsuit}) \rightarrow (\cal D_Y^{(\nu)}\Mod^{\op{Fr}^*\rightarrow\op{id}})^+.
\end{equation}
We specify the full $\infty$-subcategory $(\cal D_Y^{(\nu)}\Mod^{\op{Fr}^*})_{\op{lfg-u},\circ} \subset (\cal D_Y^{(\nu)}\Mod^{\op{Fr}^*\rightarrow\op{id}})^+$ as consisting of objects of finite Tor-dimension over $\cal O_Y$, whose cohomology groups belong to $\cal D_{F,Y}^{(\nu)}\Mod^{\heartsuit}_{\op{lfg-u}}$. In particular, the structural morphism of objects $\op{Fr}^*_Y\cal M\rightarrow\cal M$ in this full subcategory are isomorphisms. The same argument as in \S\ref{sec-frob-pullback} shows that the $\infty$-category $(\cal D_Y^{(\nu)}\Mod^{\op{Fr}^*})_{\op{lfg-u},\circ}$ is defined independently of the level $\nu$.

\subsubsection{} The following Proposition will be proved in the next subsection:

\begin{prop}
\label{prop-ab-to-der}
The functor \eqref{eq-ab-to-der} is an equivalence of $\infty$-categories.
\end{prop}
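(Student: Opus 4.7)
The plan is to exploit the $t$-structures on both sides. By Lemma \ref{lem-frob-module-heart}, the heart of $\cal D_Y^{(\nu)}\Mod^{\op{Fr}^*\to\op{id}}$ is $\cal D_{F,Y}^{(\nu)}\Mod^{\heartsuit}$, and Lemma \ref{lem-right-complete} applied to the right-complete $\infty$-category $\cal D_Y^{(\nu)}\Mod$ shows that this $t$-structure is right-complete. The functor \eqref{eq-ab-to-der} is $t$-exact and restricts to the identity on hearts. By d\'evissage through Postnikov towers, it therefore suffices to verify that for $\cal M,\cal N$ in the heart and all $i\ge 0$, the induced map
$$
\op{Ext}^i_{\cal D_{F,Y}^{(\nu)}\Mod^{\heartsuit}}(\cal M, \cal N) \longrightarrow \op{Ext}^i_{\cal D_Y^{(\nu)}\Mod^{\op{Fr}^*\to\op{id}}}(\cal M, \cal N)
$$
is an isomorphism; essential surjectivity on bounded-below objects then follows formally from right-completeness, left-boundedness, and the equivalence on hearts.

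The strategy for matching the Ext-groups is to present both as the cohomology of a common two-term complex. On the target side, Lemma \ref{lem-fixed-point-hom}, combined with the fact that an equalizer of two maps between spectra is the fiber of their difference, yields a fiber sequence of mapping spectra
$$
\op{Map}(\cal M, \cal N) \to \op{Map}_{\cal D_Y^{(\nu)}\Mod}(\cal M, \cal N) \to \op{Map}_{\cal D_Y^{(\nu)}\Mod}(\op{Fr}_Y^*\cal M, \cal N),
$$
whose second arrow sends $f \mapsto \psi_{\cal N}\circ\op{Fr}_Y^*(f) - f\circ\psi_{\cal M}$. On the source side, I would establish for every $\cal M$ a short exact sequence of $F$-crystals, analogous to the Koszul resolution of a module over a skew polynomial ring:
$$
0 \to \cal D_{F,Y}^{(\nu)}\otimes_{\cal D_Y^{(\nu)}}\op{Fr}_Y^*\cal M \to \cal D_{F,Y}^{(\nu)}\otimes_{\cal D_Y^{(\nu)}}\cal M \to \cal M \to 0,
$$
in which the right map is the counit of the induction-forgetful adjunction and the left map corresponds to ``$T\otimes 1 - 1\otimes \psi_{\cal M}$'' under the decomposition $\cal D_{F,Y}^{(\nu)}=\bigoplus_{r\ge 0}(\op{Fr}_Y^*)^r\cal D_Y^{(\nu)}$. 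Since induction is left adjoint to the exact forgetful functor to $\cal D_Y^{(\nu)}\Mod$, the first two terms are acyclic for $\op{Hom}_{\cal D_{F,Y}^{(\nu)}}(-, \cal N)$, so applying this bifunctor yields a long exact sequence matching the one from the target side term by term, with the connecting map identified as the same difference expression via the adjunction formula.

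The main obstacle is justifying the two-term resolution itself. This requires a careful analysis of the algebra structure on $\cal D_{F,Y}^{(\nu)}$, described in \cite[\S13.3]{EK04} as a twisted polynomial ring over $\cal D_Y^{(\nu)}$ generated by a ``Frobenius operator'' $T$ satisfying $T\cdot d = \op{Fr}_Y(d)\cdot T$. Given the freeness of $\cal D_{F,Y}^{(\nu)}$ as a right $\cal D_Y^{(\nu)}$-module implicit in the direct-sum decomposition, the exactness of the proposed resolution reduces to a graded-piece computation. Once this is in place, the comparison of long exact sequences and the identification of connecting maps are tautological, which completes the proof.
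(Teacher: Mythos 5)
Your core idea matches the paper's: compare the two sides via Lemma \ref{lem-fixed-point-hom} on the target and the canonical two-term resolution $0\to\cal D_{F,Y}^{(\nu)}\otimes_{\cal D_Y^{(\nu)}}\op{Fr}_Y^*\cal M\to\cal D_{F,Y}^{(\nu)}\otimes_{\cal D_Y^{(\nu)}}\cal M\to\cal M\to 0$ on the source (this resolution is exactly \cite[Prop.\ 13.6.1]{EK04}, so you need not re-derive it). The paper, however, routes the comparison through the criterion of \cite[Prop.\ 1.3.3.7]{Lu17} (in its dual form): to show $D^+(\cal C^\heartsuit)\to\cal C^+$ is an equivalence, it suffices to check that $\op{Ext}^i_{\cal C}(\cal M, I)=0$ for $i\ge 1$ whenever $I$ is \emph{injective} in the heart. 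Because the forgetful functor preserves injectives (its left adjoint, induction, is exact), such $I$ remains injective as a $\cal D_Y^{(\nu)}$-module, so both Hom-terms in the Lemma-\ref{lem-fixed-point-hom} fiber sequence are concentrated in degree zero and the whole calculation is underived: one only has to check surjectivity of a single map of abelian groups, which follows from $\Hom(-,I)$-exactness of the two-term resolution. Your route insists on matching $\op{Ext}^i$ for arbitrary $\cal N$ in the heart, which, while logically sufficient (modulo a dévissage via Postnikov towers and right-completeness that would need to be spelled out), is strictly more work, and it contains one real misstatement: the induced $F$-crystals $\cal D_{F,Y}^{(\nu)}\otimes_{\cal D_Y^{(\nu)}}(-)$ are \emph{not} acyclic for $\Hom_{\cal D_{F,Y}^{(\nu)}}(-,\cal N)$ when $\cal N$ is an arbitrary object of the heart --- they are not projective, and the category need not have enough projectives. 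What is true, and what you presumably intend, is the derived adjunction isomorphism $\op{Ext}^i_{\cal D_{F,Y}^{(\nu)}}(\cal D_{F,Y}^{(\nu)}\otimes\cal M,\cal N)\cong\op{Ext}^i_{\cal D_Y^{(\nu)}}(\cal M,\cal N)$, valid because the forgetful functor is exact and preserves injectives, so one can compute both sides against an injective resolution of $\cal N$. With that replacement, and with the compatibility of the two long exact sequences (the identification of the connecting maps via the explicit formula for $\alpha$ as a difference of adjoint maps), your five-lemma argument would close, but the end result is a longer proof of the same fact by the same essential ingredients.
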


\begin{cor}
\label{cor-ekrh}
There is a canonical (anti-)equivalence of $W_n(\mathbb F_q)$-linear $\infty$-categories:
$$
(\cal D_Y^{(\nu)}\Mod^{\op{Fr}^*})_{\op{lfg-u},\circ}^{\op{op}} \xrightarrow{\sim} W_n(\mathbb F_q)\Shv_c(Y_0).
$$
\end{cor}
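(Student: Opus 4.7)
The plan is to deduce the corollary by chaining together three inputs already available: Proposition \ref{prop-ab-to-der}, the Emerton--Kisin Riemann--Hilbert equivalence \eqref{eq-rh}, and the level-independence statements in \S\ref{sec-derived-cat-level} and \S\ref{sec-f-crys-fixed-points}. Concretely, I would start from the equivalence of Proposition \ref{prop-ab-to-der},
$$
D^+(\cal D_{F,Y}^{(\nu)}\Mod^{\heartsuit}) \xrightarrow{\sim} (\cal D_Y^{(\nu)}\Mod^{\op{Fr}^*\rightarrow\op{id}})^+,
$$
and restrict it to the full $\infty$-subcategory $D_{\op{lgfu},\circ}^b(\cal D_{F,Y}^{(\nu)}\Mod^{\heartsuit})$. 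Since this functor is $t$-exact and both subcategories are defined by the same cohomological conditions, the restriction lands in $(\cal D_Y^{(\nu)}\Mod^{\op{Fr}^*\rightarrow\op{id}})_{\op{lgfu},\circ}^+$ and is again an equivalence onto that subcategory. The remark in \S\ref{sec-f-crys-fixed-points} that unit $F$-crystals have invertible structural map, together with the fact that an object of $\cal D_Y^{(\nu)}\Mod^{\op{Fr}^*\rightarrow\op{id}}$ lies in $\cal D_Y^{(\nu)}\Mod^{\op{Fr}^*}$ iff all its cohomology sheaves are unit, identifies this restricted subcategory with $(\cal D_Y^{(\nu)}\Mod^{\op{Fr}^*})_{\op{lgfu},\circ}$.

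Next, I would appeal to the Emerton--Kisin theorem \eqref{eq-rh} to produce the anti-equivalence
$$
D_{\op{lgfu},\circ}^b(\cal D_{F,Y}\Mod^{\heartsuit})^{\op{op}} \xrightarrow{\sim} W_n(\mathbb F_q)\Shv_c(Y),
$$
and combine it with the above. To move between levels, I would invoke \eqref{eq-equiv-change-level} on the abelian side and its analogue on the $\infty$-categorical side (noted at the end of \S\ref{sec-f-crys-fixed-points}), both of which show that the $\infty$-categories in question are canonically independent of $\nu$ via the forgetful functors. Chaining these equivalences yields the desired
$$
(\cal D_Y^{(\nu)}\Mod^{\op{Fr}^*})_{\op{lgfu},\circ}^{\op{op}} \xrightarrow{\sim} W_n(\mathbb F_q)\Shv_c(Y),
$$
and the $W_n(\mathbb F_q)$-linearity is preserved throughout because each constituent functor is $W_n(\mathbb F_q)$-linear by construction.

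There is essentially no new obstacle in this corollary beyond Proposition \ref{prop-ab-to-der} itself; the only mildly delicate point is to confirm that the identification of hearts in Lemma \ref{lem-frob-module-heart} is genuinely compatible with the conditions ``$\op{lgfu}$'' and ``$\circ$'' that cut out the relevant subcategories on both sides, so that the $t$-exact equivalence of Proposition \ref{prop-ab-to-der} really restricts to an equivalence of the two subcategories and not merely an inclusion. This is a bookkeeping check: both conditions are imposed cohomology-wise and are phrased purely in terms of the heart, so $t$-exactness of the forgetful functors on both sides makes the matching automatic.
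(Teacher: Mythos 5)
Your proposal is correct and follows essentially the same route as the paper: restrict the equivalence of Proposition \ref{prop-ab-to-der} to the $\op{lgfu},\circ$ subcategories using $t$-exactness, pass between levels via \eqref{eq-equiv-change-level} and its $\infty$-categorical counterpart, and then apply the Emerton--Kisin equivalence \eqref{eq-rh}. The paper states this more tersely but the chain of equivalences is the same.
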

\begin{proof}
It follows from Proposition \ref{prop-ab-to-der} that we have an equivalence:
\begin{align*}
(\cal D_Y^{(\nu)}\Mod^{\op{Fr}^*})_{\op{lfg-u},\circ} \xrightarrow{\sim} & D_{\op{lfg-u}, \circ}^b(\cal D_{Y,F}^{(\nu)}\Mod^{\heartsuit}) \\
& \xrightarrow{\sim} D_{\op{lfg-u}, \circ}^b(\cal D_{Y,F}\Mod^{\heartsuit}) \xrightarrow{\sim} W_n(\mathbb F_q)\Shv_c(Y_0)^{\op{op}},
\end{align*}
where the last step is the Riemann--Hilbert correspondence \eqref{eq-rh}.
\end{proof}

\begin{rem}
If we worked with triangulated categories instead of stable $\infty$-categories, the resulting fixed point category would be defined incorrectly. This is the main reason we use $\infty$-categories in this paper.
\end{rem}

\subsection{Proof of Proposition \ref{prop-ab-to-der}}

\subsubsection{}
\label{sec-crystal-adjunct} We first recall a basic adjunction for $F$-crystals:
\begin{equation}
\label{eq-ind-crys}
\xymatrix{
	\cal D_Y^{(\nu)}\Mod^{\heartsuit} \ar@/^1pc/[r]^{\cal D_{F,Y}^{(\nu)}\underset{\cal D_Y}{\otimes}-} & \cal D_{F,Y}^{(\nu)}\Mod^{\heartsuit} \ar@/^1pc/[l]^{\op{oblv}}
}
\end{equation}
Since $\cal D_{F,Y}^{(\nu)}$ is flat as a right $\cal D_Y^{(\nu)}$-module (in fact, locally free), the left adjoint is exact. In particular, the right adjoint preserves injective objects. Following the terminology of \cite[\S13.6]{EK04}, we call a $\cal D_{F,Y}^{(\nu)}$-module \emph{induced}, if it arises from applying the functor $\cal D_{F,Y}^{(\nu)}\underset{\cal D_Y^{(\nu)}}{\otimes}-$ to a $\cal D_Y^{(\nu)}$-module.

\subsubsection{} Each $F$-crystal $(\cal M, \psi_{\cal M})$ admits a canonical two-step resolution by induced $F$-crystals (Proposition 13.6.1 of \emph{loc.cit.}):
\begin{equation}
\label{eq-two-step-filt}
0 \rightarrow \cal D_{F,Y}^{(\nu)} \underset{\cal D_Y^{(\nu)}}{\otimes} \op{Fr}_Y^*\cal M \xrightarrow{\alpha} \cal D_{F,Y}^{(\nu)} \underset{\cal D_Y^{(\nu)}}{\otimes} \cal M \rightarrow \cal M \rightarrow 0,
\end{equation}
where $\alpha$ is the \emph{difference} between the following two maps:
\begin{enumerate}[(a)]
	\item The adjoint of the map of $\cal D_Y^{(\nu)}$-modules:
	$$
	\op{Fr}_Y^*\cal M \xrightarrow{\sim} \op{Fr}_Y^*\cal D_Y^{(\nu)} \underset{\cal D_Y^{(\nu)}}{\otimes} \cal M \hookrightarrow \cal D_{F,Y}^{(\nu)} \underset{\cal D_Y^{(\nu)}}{\otimes} \cal M.
	$$
	\item The adjoint of the map of $\cal D_Y^{(\nu)}$-modules:
	$$
	\op{Fr}_Y^*\cal M \xrightarrow{\psi_{\cal M}} \cal M \hookrightarrow \cal D_{F,Y}^{(\nu)} \underset{\cal D_Y^{(\nu)}}{\otimes} \cal M.
	$$
\end{enumerate}

\begin{rem}
Although $\alpha$ is a map between induced $F$-crystals, it is not induced from a map of $\cal D_Y^{(\nu)}$-modules.
\end{rem}

\subsubsection{} Suppose $\cal C$ is a stable $\infty$-category equipped with a right complete $t$-structure, and $\cal C^{\heartsuit}$ has enough injective objects. Then the canonical functor $D^+(\cal C^{\heartsuit}) \rightarrow \cal C^+$ is an equivalence if and only if the following conditions holds:
\begin{itemize}
	\item For every injective object $I \in \cal C^{\heartsuit}$, the abelian group $\op{Ext}^i_{\cal C}(\cal M, I) = 0$ for all $\cal M\in\cal C^{\heartsuit}$ and $i\ge 1$.
\end{itemize}
This is essentially (the dual of) \cite[Proposition 1.3.3.7]{Lu17}.

\subsubsection{} We now combine the above ingredients to give a proof of Proposition \ref{prop-ab-to-der}. We first note that $\cal D_Y^{(\nu)}\Mod^{\op{Fr}^*\rightarrow\op{id}}$ is right complete with respect to its $t$-structure (Lemma \ref{lem-right-complete}). Let $I, \cal M \in \cal D_{F,Y}^{(\nu)}\Mod^{\heartsuit}$, with $I$ being an injective object. By Lemma \ref{lem-fixed-point-hom}, the Hom-complex $R\Hom_{\cal D_Y^{(\nu)}\Mod^{\op{Fr}^*\rightarrow\op{id}}}(\cal M, I)$ identifies with the (homotopy) fiber of:
$$
\beta : R\Hom_{\cal D_Y^{(\nu)}\Mod}(\cal M, I) \rightarrow R\Hom_{\cal D_Y^{(\nu)}\Mod}(\op{Fr}_Y^*\cal M, I),\quad f\leadsto \psi_I\circ \op{Fr}_Y^*(f) - f\circ\psi_{\cal M}.
$$
We must show that this fiber has zero cohomology groups in degree $i\ge 1$.

\subsubsection{}
From the discussion in \S\ref{sec-crystal-adjunct}, we know that $I$ is also injective as an object in $\cal D_Y^{(\nu)}\Mod$. Hence we may replace $\beta$ by a map between \emph{underived} Hom-spaces:
$$
\beta : \Hom_{\cal D_Y^{(\nu)}\Mod^{\heartsuit}}(\cal M, I) \rightarrow \Hom_{\cal D_Y^{(\nu)}\Mod^{\heartsuit}}(\op{Fr}_Y^*\cal M, I).
$$
We now observe that the following diagram commutes:
$$
\xysmall{
	\Hom_{\cal D_Y^{(\nu)}\Mod^{\heartsuit}}(\cal M, I) \ar[r]^{\beta}\ar[d]^{\cong} & \Hom_{\cal D_Y^{(\nu)}\Mod^{\heartsuit}}(\op{Fr}_Y^*\cal M, I) \ar[d]^{\cong} \\
	\Hom_{\cal D_{F,Y}^{(\nu)}\Mod^{\heartsuit}}(\cal D_{F,Y}^{(\nu)}\underset{\cal D_Y^{(\nu)}}{\otimes}\op{Fr}_Y^*\cal M, I) \ar[r]^-{\alpha^*} \ar[r] & \Hom_{\cal D_{F,Y}^{(\nu)}\Mod^{\heartsuit}}(\cal D_{F,Y}^{(\nu)}\underset{\cal D_Y^{(\nu)}}{\otimes}\cal M, I)
}
$$
Here, the vertical isomorphisms come from the adjuction \eqref{eq-ind-crys}. The bottom map is precomposition with $\alpha$ in the two-step resolution \eqref{eq-two-step-filt}. This calculation shows that the fiber of $\beta$ is isomorphic to the abelian group $\Hom_{\cal D_{F,Y}^{(\nu)}\Mod^{\heartsuit}}(\cal M, I)$, as expected.
\qed(Proposition \ref{prop-ab-to-der})

\medskip

\section{The $p$-torsion Artin reciprocity functor}

In this section, we construct $p$-torsion Artin reciprocity as a contravariant functor from $\op{Coh}(\widetilde{\op{Jac}}{}_n^{\natural})^{\op{id}}_{\circ}$ to the category of $W_n(\mathbb F_q)$-sheaves on the $\op{Jac}_n$. Then we verify that this functor categorifies the usual Artian reciprocity map for $p$-torsion characters trivial on $x_0$. In fact, our construction applies more generally to any abelian scheme $A$ over $W_n(\mathbb F_q)$ and the Verschiebung--fixed locus of the universal additive extension of $A^{\vee}$.

\smallskip

Along the way, we will discuss how the Vershiebung--fixed locus $\widetilde{\op{Jac}}{}_n^{\natural}$ is related to characters of the \'etale fundamental group.

\subsection{The Jacobian scheme}
\label{sec-jac}

\subsubsection{} From now on, \emph{we will assume $k=\mathbb F_q$} for simplicity. The ring of length--$n$ Witt vectors will be denoted by $W_n$. Let $X$ be a smooth, proper, geometrically connected curve over $\mathbb F_q$. We assume that it contains an $\mathbb F_q$-rational point $x_0$. We fix a smooth lift $X_n$ over $W_n$; it always exists since the obstruction to such lifts lies in a degree--$2$ cohomology group. The smoothness implies that $x_0$ can be lifted to a $W_n$-rational point of $X_n$.

\subsubsection{} The Jacobian $\op{Jac}_n$ parametrizes degree-$0$ line bundles on $X_n$ together with a rigidification at $x_0$. It is representable by a smooth abelian scheme over $W_n$ (see, for example, \cite[Proposition 9.5.19]{Kl05}). There is a canonical self-duality of $\op{Jac}_n$, as witnessed by the following Poincar\'e line bundle $\cal P$ on $\op{Jac}_n \underset{W_n}{\times} \op{Jac}_n$:
\begin{align*}
\cal L_1, \cal L_2 \leadsto \det R\Gamma(X_n, \cal L_1\otimes \cal L_2) \otimes & \det R\Gamma(X_n, \cal L_1)^{\otimes -1} \\
& \otimes \det R\Gamma(X_n, \cal L_2)^{\otimes -1} \otimes \det R\Gamma(X_n, \cal O).
\end{align*}
The notation $R\Gamma(X_n, -)$ is a short-hand for pushfoward along $X_n \underset{W_n}{\times} S \rightarrow S$, where $S$ is a test $W_n$-scheme. The determinant is well-defined since $R\Gamma(X_n,-)$, applied to any coherent sheaf flat over $S$, yields a perfect complex.

\subsubsection{} To each abelian scheme $A$ over $W_n$, we attach an additive extension of its dual:
$$
0 \rightarrow \op H^0(\Omega_{A/W_n}) \rightarrow \widetilde A^{\vee} \xrightarrow{\pi} A^{\vee} \rightarrow 0.
$$
Explicitly, $\widetilde A^{\vee}$ classifies a multiplicative line bundle on $A$ equipped with an integrable connection. In other words, an $S$-point of $\widetilde A^{\vee}$ is a multiplicative line bundle on $S\underset{W_n}{\times} A$ whose $\cal O$-module structure enhances to that of $\cal D^{(0)}_{S\times A/S}$. We note that this datum is equivalent to a multiplicative pair $(\cal L, \nabla)$, where $\cal L$ is a line bundle on $S\underset{W_n}{\times} A$ and  $\nabla$ is an integrable connection on $\cal L$ along $A$ (\cite[Lemme 2.1.1]{La96}). One knows that $\widetilde A^{\vee}$ is representable by a smooth scheme of relative dimension $2g$ over $W_n$, and is in fact the universal additive extension of $A^{\vee}$ \cite[\S1]{MM06}. Pulling back along the Frobenius on $A$ (see \S\ref{sec-frob-pullback}) defines an endomorphism:
\begin{equation}
\label{eq-ver-defn}
\op{Ver} : \widetilde A^{\vee} \rightarrow \widetilde A^{\vee},
\end{equation}
which we will call the \emph{Verschiebung} endormophism.

\begin{rem}
Suppose $n=1$. Then \eqref{eq-ver-defn} lifts the usual Verschiebung endomorphism of the abelian variety $A^{\vee}$ over $\mathbb F_q$.
\end{rem}

\subsubsection{} We let $\widetilde A^{\vee, \natural}$ denote the Verschiebung-fixed point locus of $\widetilde A^{\vee}$. In other words, it is the \emph{a priori} derived closed subscheme given by the fiber product:
\begin{equation}
\label{eq-ver-fixed-locus}
\xymatrix@R=1.5em{
	\widetilde A^{\vee, \natural} \ar[r]^{\iota}\ar[d] & \widetilde A^{\vee} \ar[d]^{\Delta} \\
	\widetilde A^{\vee} \ar[r]^-{(\op{Ver},\op{id})} & \widetilde A^{\vee} \underset{W_n}{\times} \widetilde A^{\vee}
}
\end{equation}

\begin{lem}
\label{lem-univ-reg}
The map $\iota : \widetilde A^{\vee, \natural} \hookrightarrow \widetilde A^{\vee}$ is a regular immersion of classical schemes.
\end{lem}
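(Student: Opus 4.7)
The plan is to show that $\widetilde A^{\vee,\natural}$ is finite over $W_n$ and then invoke a Cohen--Macaulay counting argument. I would work through the universal extension $0 \to V \to \widetilde A^\vee \xrightarrow{\pi} A^\vee \to 0$ with $V = H^0(\Omega_{A/W_n})$, analyzing $\op{Ver}$ on each piece. On $V$, a section corresponds to $(\mathcal O_A, d + \omega)$ with $\omega \in H^0(\Omega_A)$, and $\op{Ver}$ sends it to $(\mathcal O_A, d + \tilde F^*\omega)$ for any local Frobenius lift $\tilde F$. For $\omega = f\, dg$ one has $\tilde F^*\omega = \tilde F^*(f)\, d(\tilde F^*g) \in p\cdot \Omega^1_{A/W_n}$, since $q = p^s$ divides the leading factor $q g^{q-1}$; hence $\op{Ver}|_V \equiv 0 \pmod p$, and $\op{Ver}|_V - \op{id} \equiv -\op{id} \pmod p$ is an automorphism mod $p$. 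Nakayama's lemma over the Artinian local ring $W_n$ then lifts this to an automorphism of $V$ over $W_n$.

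On the abelian quotient, the induced endomorphism is the classical Verschiebung $V_{A^\vee} = F_A^\vee$. The Weil bound shows every Frobenius eigenvalue on $T_\ell(A^\vee_{\mathbb F_q})$ has absolute value $\sqrt q \ne 1$, so $V_{A^\vee} - \op{id}$ is an isogeny over the closed fibre; combined with properness of $A^\vee$ over $W_n$, the lifted morphism $V_{A^\vee} - \op{id}$ is finite, and its kernel $A^{\vee,\natural}$ is finite over $W_n$ of codimension $g$ in $A^\vee$. Applying the snake lemma to the exact sequence---using the automorphism from the previous paragraph---yields an isomorphism $\pi\colon \widetilde A^{\vee,\natural} \xrightarrow{\sim} A^{\vee,\natural}$, so $\widetilde A^{\vee,\natural}$ is finite over $W_n$ of codimension $2g$ in $\widetilde A^\vee$.

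To conclude, the defining ideal of $\widetilde A^{\vee,\natural}$ in $\widetilde A^\vee$ is generated by the $2g$ equations $\op{Ver}^*x_i - x_i$ for any choice of local coordinates $x_1, \ldots, x_{2g}$ on $\widetilde A^\vee$. Since local rings of smooth $W_n$-schemes are Cohen--Macaulay of Krull dimension $2g$, any ideal generated by $2g$ elements whose vanishing locus is zero-dimensional is automatically generated by a regular sequence, by the system-of-parameters theorem. This makes $\iota$ a regular immersion and additionally shows that the derived fibre product agrees with the classical one. The main obstacle is the finiteness in the previous paragraph: the naive check that $d\op{Ver} - \op{id}$ is injective on $\op{Lie}(\widetilde A^\vee) = H^1_{\op{dR}}(A/W_n)$ can fail---for instance, for anomalous abelian varieties where $1$ is an eigenvalue of the Hasse--Witt operator on $H^1(A, \mathcal O_A)$---so one genuinely needs the Weil eigenvalue input to rule out a positive-dimensional fixed locus, even though the resulting subscheme may be non-reduced at such anomalous points.
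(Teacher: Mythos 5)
Your argument is correct and reaches the same conclusion by the same underlying mechanism, but it organizes the work differently. The paper observes that the diagonal and the graph of $\op{Ver}$ are both \emph{sections} of the smooth projection $\widetilde A^{\vee} \underset{W_n}{\times} \widetilde A^{\vee} \rightarrow \widetilde A^{\vee}$, hence regular immersions of the correct codimension, so by the Koszul criterion (the same fact as your Cohen--Macaulay system-of-parameters argument) the only thing to check is that $\widetilde A^{\vee,\natural}$ has Krull dimension zero. It then reduces that single assertion to the special fiber $n=1$, where the classical Verschiebung on $A^{\vee}$ is available, $\op H^0(\Omega)$ is killed, and $\op{Ver}-\op{id}$ is an isogeny because it is dual to the Lang isogeny $\op{Fr}-\op{id}$. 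You instead run the analysis over $W_n$ throughout: the $p$-divisibility computation showing $\op{Ver}|_V\equiv 0\pmod p$ so that $\op{Ver}|_V-\op{id}$ is invertible, the Weil-eigenvalue argument (in place of the Lang isogeny) for finiteness of $\op{Ver}-\op{id}$ on $A^{\vee}$, and the snake lemma. This is a bit more work than the base-change-to-$\mathbb F_q$ shortcut, but it yields the stronger statement that $\pi\colon \widetilde A^{\vee,\natural}\xrightarrow{\sim} A^{\vee,\natural}$ over $W_n$, and your closing remark about anomalous abelian varieties correctly identifies why merely checking that $d\op{Ver}-\op{id}$ is injective on $\op{Lie}(\widetilde A^{\vee})$ would not suffice.

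Two steps deserve more care. First, for $n>1$ you assume $\op{Ver}$ descends to an endomorphism of $A^{\vee}$, which requires $\op{Ver}(V)\subset V$; this is not automatic from the mod-$p$ picture, but it does hold because any homomorphism from a vector group to an abelian scheme over $W_n$ is zero (extend along $\mathbb P^1$ by properness), so $\pi\circ\op{Ver}|_V = 0$. Second, the ``Nakayama'' step concerns a scheme endomorphism of $V\cong \mathbb A^g_{W_n}$, not a module map: inverting a polynomial self-map that is $\equiv-\op{id}\pmod p$ over the Artinian ring $W_n$ requires the iterative lifting you implicitly use (which terminates after $n$ steps since $p^n=0$), rather than Nakayama as such. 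Both points are easily filled in and do not affect the soundness of the proof.
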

\begin{proof}
By smoothness of $\widetilde A^{\vee}$ over $W_n$, we see that both embeddings of $\widetilde A^{\vee}$ in $\widetilde A^{\vee} \underset{W_n}{\times} \widetilde A^{\vee}$ in \eqref{eq-ver-fixed-locus} are regular immersions of half-dimensional subschemes; indeed, these are both sections of the projection map $\op{pr}_2 : \widetilde A^{\vee} \underset{W_n}{\times} \widetilde A^{\vee} \rightarrow \widetilde A^{\vee}$ which is smooth (c.f. \cite[067R]{Stacks}). Recall that given a Noetherian local ring $(R, \fr m)$ of dimension $d$ and elements $f_1,\cdots,f_d\in\fr m$, the following statements are equivalent:
\begin{enumerate}[(a)]
	\item the Koszul complex associated to $f_1, \cdots, f_d$ has cohomology only in degree $0$ (i.e., $f_1,\cdots,f_d$ is a regular sequence);
	\smallskip
	\item the Krull dimension of $R/(f_1,\cdots, f_d)$ vanishes.
\end{enumerate}
Using this fact, we see that the Lemma will follow from checking the Krull dimension of (the underlying classical scheme) of $\widetilde A^{\vee, \natural}$ vanishes. This statement can in turn be verified after base change to the special fiber over $\Spec(\mathbb F_q)$, so \emph{we may assume $n=1$}. In this case, we have the usual Verschiebung endomorphism on $A^{\vee}$, given by the Frobenius-pullback of line bundles on $A$. There is a commutative diagram:
$$
\xysmall{
	0 \ar[r] & \op H^0(\Omega_{A/\mathbb F_q}) \ar[r]\ar[d]^{\op{Fr}^* = 0} & \widetilde A^{\vee} \ar[r]^{\pi} \ar[d]^{\op{Ver}} & A^{\vee} \ar[r]\ar[d]^{\op{Ver}} & 0 \\
	0 \ar[r] & \op H^0(\Omega_{A/\mathbb F_q}) \ar[r] & \widetilde A^{\vee} \ar[r]^{\pi} & A^{\vee} \ar[r] & 0
}
$$
Consequently, $\pi$ identifies $\widetilde A^{\vee, \natural}$ with the Verschiebung-fixed point scheme $A^{\vee, \natural}$ of $A^{\vee}$. We now show that the latter is zero-dimensional. Note that $A^{\vee, \natural}$ is the fiber of the map of abelian varieties:
$$
0 \rightarrow A^{\vee, \natural} \rightarrow A^{\vee} \xrightarrow{\op{Ver} - \op{id}} A^{\vee}.
$$
Now, the endomorphism $\op{Ver} - \op{id}$ is an isogeny since it is dual to the Lang isogeny $\op{Fr} - \op{id} : A\rightarrow A$, so we are done.
\end{proof}

\begin{rem}
In particular, the proof shows that $\widetilde A^{\vee, \natural}$ is a finite $W_n$-scheme whose $\mathbb F_q$-points are identified with those of the Verschiebung fixed locus of the special fiber $A_1^{\vee}$ of $A^{\vee}$.
\end{rem}

\subsubsection{} Let $\widetilde{\op{Jac}}_n$ denote the abelian scheme classifying degree-$0$, rigidified line bundles on $X_n$ together with an integrable connection (relative to $W_n$). Then the auto-duality of $\op{Jac}_n$ enhances to an isomorphism of $\widetilde{\op{Jac}}_n$ with the universal additive extension of the dual of $\op{Jac}_n$. In other words, we have a commutative diagram:
$$
\xysmall{
	\widetilde{\op{Jac}}_n \ar[r]^-{\sim}\ar[d] &  \widetilde{\op{Jac}}{}_n^{\vee} \ar[d] \\
	\op{Jac}_n \ar[r]^-{\sim} & \op{Jac}_n^{\vee}
}
$$
Furthermore, the Verschiebung endomorphism on $\widetilde{\op{Jac}}{}_n^{\vee}$ passes to the endomorphism:
$$
\op{Ver} : \widetilde{\op{Jac}}_n \rightarrow \widetilde{\op{Jac}}_n,\quad \cal L \leadsto \op{Fr}_{X_n}^*\cal L.
$$

\subsubsection{}
\label{sec-verschiebung-fixed-point-locus}
We write $\widetilde{\op{Jac}}{}_n^{\natural}$ for the Verschiebung-fixed point locus of $\widetilde{\op{Jac}}_n$. Explicitly, an $S$-point of $\widetilde{\op{Jac}}{}_n^{\natural}$ is a degree-$0$, rigidified line bundle $\cal L$ on $X_n$ with an integrable connection, as well as an identification $\op{Fr}_{X_n}^*\cal L \xrightarrow{\sim} \cal L$ of $\cal D_{S\times X_n/S}^{(0)}$-modules compatible with the rigidification at $x_0$. Lemma \ref{lem-univ-reg} shows that $\widetilde{\op{Jac}}{}_n^{\natural}$ is a disjoint union of connected zero-dimensional schemes $\widetilde{\op{Jac}}{}_{n, \sigma}^{\natural}$, where $\sigma$ ranges through the $\mathbb F_q$-points of $\widetilde{\op{Jac}}{}_n^{\natural}$.

\subsection{Galois deformations}
\label{sec-galois-def}

\subsubsection{} In this subsection, we relate the Verschiebung-fixed point locus to Galois deformations over $W_n$. This relation is not needed for the construction of the Artin reciprocity functor.

\subsubsection{} Fix a geometric point $\overline x_0$ of the curve $X$ \emph{lying over} $x_0$. We use $\overline x_0$ to define the \'etale fundamental groups of $\overline X$ and $X$. There is a short exact sequence:
$$
1 \rightarrow \pi_1(\overline X) \rightarrow \pi_1(X) \rightarrow \pi_1(\Spec(\mathbb F_q)) \rightarrow 0,
$$
where the second map has a section defined by $x_0$. We will be concerned with continuous characters $\sigma : \pi_1(X) \rightarrow \mathbb F_q^{\times}$ which are \emph{trivial on $\pi_1(x_0)$}. We note that such objects are equivalently rank--$1$ \'etale $\mathbb F_q$-sheaves on $X$ trivialized at $x_0$. By Katz's equivalence, they are in turn $\mathbb F_q$-points of $\widetilde{\op{Jac}}{}^{\natural}_1$ (or equivalently $\widetilde{\op{Jac}}{}_n^{\natural}$).

\begin{rem}
We know \emph{a fortiori} that $\sigma$ is equivalent to a character of the geometric fundamental group $\pi_1(\overline X)$, but viewing it as such is less natural for our purpose.
\end{rem}

\subsubsection{} Let $\sigma : \pi_1(X) \rightarrow \mathbb F_q^{\times}$ be a continuous character which is trivial on $\pi_1(x_0)$. We define a functor $\op{Def}^{\sigma}_n$ on the category of Artinian, local $W_n$-algebras with residue field $\mathbb F_q$ as follows. For $S=\Spec(A)$ where $A$ is such a $W_n$-algebra, we let $\op{Def}_n^{\sigma}(A)$ be the set of continuous characters:
$$
\pi_1(X) \rightarrow A^{\times}, \quad\text{trivial on }\pi_1(x_0),
$$
such that its reduction along $A^{\times} \rightarrow \mathbb F_q^{\times}$ identifies with $\sigma$. Since $A$ is finite, the continuity requirement is equivalent to factoring through a finite quotient.

\subsubsection{} We now use regard $\sigma$ as an $\mathbb F_q$-point of $\widetilde{\op{Jac}}{}_n^{\natural}$.

\begin{lem}
\label{lem-galois-def}
The functor $\op{Def}_n^{\sigma}$ is represented by the \emph{pointed} $W_n$-scheme $\widetilde{\op{Jac}}{}_{n, \sigma}^{\natural}$.
\end{lem}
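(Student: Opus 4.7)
The plan is to reduce the statement to Katz's equivalence (Lemma \ref{lem-katz-equiv}) applied at rank $r=1$ with $Y=X_n$, combined with an extra bookkeeping of the rigidification at $x_0$ and a connected-component argument to enforce the reduction condition.

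Given an Artinian local $W_n$-algebra $A$ with residue field $\mathbb F_q$, the ring $A$ is automatically finite over $W_n$ and $S:=\Spec(A)$ is connected. First I would observe that a continuous character $\pi_1(X)\to A^{\times}$ trivial on $\pi_1(x_0)$ is the same datum as a rank--$1$ \'etale $A$-local system on $X$ (i.e.~one locally isomorphic to $\underline A$) together with a trivialization of its stalk at $\overline x_0$ compatible with the $\mathbb F_q$-structure at $x_0$. Applying Lemma \ref{lem-katz-equiv} with $r=1$, I would then identify such objects, functorially in $S$, with line bundles $\cal E$ on $S\underset{W_n}{\times}X_n$ equipped with an integrable connection along $X_n/W_n$, an isomorphism $\op{Fr}_{X_n}^*\cal E\xrightarrow{\sim}\cal E$ of $\cal D^{(0)}$-modules, and a trivialization at the $S$-point $\{x_0\}\times S$.

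Since $\cal E$ arises from the formula $\cal F\otimes_{W_n(\mathbb F_q)}\cal O_{S\times X_n}$ in the proof of Lemma \ref{lem-katz-equiv}, it is \'etale-locally trivial along $X_n$; in particular it has relative degree $0$ on every fiber over $S$. Combined with the rigidification at $x_0$, this exhibits $\cal E$ as an $S$-point of $\widetilde{\op{Jac}}_n^{\natural}$. The requirement that the reduction of the character modulo the maximal ideal of $A$ equal $\sigma$ translates into the condition that the closed point of $S$ map to $\sigma\in\widetilde{\op{Jac}}_n^{\natural}(\mathbb F_q)$; since $\widetilde{\op{Jac}}_n^{\natural}$ is a disjoint union of the zero-dimensional connected pieces $\widetilde{\op{Jac}}_{n,\sigma'}^{\natural}$ (as noted after Lemma \ref{lem-univ-reg}) and $S$ is connected, this is equivalent to the entire map $S\to\widetilde{\op{Jac}}_n^{\natural}$ factoring through $\widetilde{\op{Jac}}_{n,\sigma}^{\natural}$. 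Assembling these bijections yields the desired functorial isomorphism $\op{Def}_n^{\sigma}(A)\xrightarrow{\sim}\Hom_{W_n}(\Spec(A),\widetilde{\op{Jac}}_{n,\sigma}^{\natural})$ respecting base points.

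The main obstacle, modest as it is, is the bookkeeping needed to confirm that the functor $F$ constructed in the proof of Lemma \ref{lem-katz-equiv} is compatible with stalk trivializations at $x_0$; this should be immediate from the explicit formula, which gives $\cal E|_{\{x_0\}\times S}\cong\cal F_{\overline x_0}\otimes_{\mathbb F_q}A$ canonically once $A$ is fixed. The remaining subtlety is the degree-$0$ condition on all of $S\underset{W_n}{\times}X_n$, but this is locally constant in the fiber direction and evident from the \'etale-local triviality of $\cal E$, and hence holds everywhere since it holds on the closed fiber where $\cal E$ specializes to the line bundle underlying $\sigma$.
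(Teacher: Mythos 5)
Your proposal is correct and follows essentially the same route as the paper's proof: translate an element of $\op{Def}_n^{\sigma}(A)$ into a rigidified rank--$1$ \'etale $A$-local system on $X_n$ reducing to $\sigma$, apply Lemma \ref{lem-katz-equiv} to pass to a line bundle with connection and Frobenius structure, and identify the result as an $S$-point of $\widetilde{\op{Jac}}_n^{\natural}$ supported at $\sigma$. You fill in a few details the paper leaves implicit—that $A$ is finite over $W_n$ (so Katz's lemma applies), that the line bundle has relative degree $0$ by \'etale-local triviality, and that connectedness of $S$ forces the map to factor through the component $\widetilde{\op{Jac}}_{n,\sigma}^{\natural}$—but the substance of the argument is identical.
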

\begin{proof}
Given a Artinian, local $W_n$-algebra $A$ with residue field $\mathbb F_q$, write $S=\Spec(A)$. An element of $\op{Def}_n^{\sigma}(A)$ is equivalent to a rigidified \'etale $A$-local system on $X_n$ of rank $1$, whose induced $\mathbb F_q$-local system identifies with $\sigma$. By Lemma \ref{lem-katz-equiv}, this datum is equivalent to a line bundle $\cal L$ over $S \underset{W_n}{\times} X_n$ together with a connection along $X_n$ and an isomorphism $\cal L\xrightarrow{\sim} (\op{id}_S \times \op{Fr}_X)^*\cal L$, whose restriction to $x_0$ is rigidified and whose reduction to $\mathbb F_q$ identifies with $\sigma$. The latter is precisely that of an $S$-point of $\widetilde{\op{Jac}}{}_n^{\natural}$ whose closed point is $\sigma$.
\end{proof}

\subsubsection{} In particular, we obtain a fully faithful functor:
\begin{equation}
\label{eq-galois-def}
\QCoh(\op{Def}_n^{\sigma}) \xrightarrow{\sim} \QCoh(\widetilde{\op{Jac}}{}_{n, \sigma}^{\natural}) \hookrightarrow \QCoh(\widetilde{\op{Jac}}{}_n^{\natural})
\end{equation}

\begin{rem}
The argument of Lemma \ref{lem-galois-def} works more generally for representations of higher rank. Namely, for a rank--$r$ representation $\sigma$, the functor of Galois deformations $\op{Def}_n^{\sigma}$ over $W_n$ is represented by the formal neighborhood of the $W_n$-stack $\op{LocSys}^{\natural}_{r, n}$ at $\sigma$, regarded as a pointed $W_n$-stack. However, for $r\ge 2$, $\op{LocSys}^{\natural}_{r, n}$ may no longer be zero-dimensional, so it contains more information than $\op{Def}_n^{\sigma}$ (c.f.~Y.~Laszlo \cite{La01}).
\end{rem}

\subsection{Fourier--Mukai--Laumon transform}

\subsubsection{} Let $A$ be an abelian scheme over $W_n$. We recall that the Fourier--Mukai transform defines an equivalence of $W_n$-linear $\infty$-categories:
\begin{equation}
\label{eq-fm-transform}
\Phi : \QCoh(A^{\vee}) \xrightarrow{\sim} \QCoh(A), \quad \cal F\leadsto Rq_*(p^*\cal F \overset{L}{\otimes} \cal P),
\end{equation}
where $\cal P$ is the universal line bundle on $A^{\vee} \underset{W_n}{\times} A$ and $p$, $q$ are the two projections.

\subsubsection{} The Fourier--Mukai--Laumon transform (c.f.~\cite[Th\'eor\`eme 3.2.1]{La96}) enhances \eqref{eq-fm-transform} into an equivalence between $\QCoh(\widetilde A^{\vee})$ and $\cal D^{(0)}\Mod(A)$. More precisely, we write $\widetilde{\cal P}$ for the universal line bundle on $\widetilde A^{\vee} \underset{W_n}{\times} A$ equipped with an integrable connection along $A$ and $\widetilde p ,\widetilde q$ for the two projections.

\begin{lem}
The functor:
\begin{equation}
\label{eq-fml-transform}
\widetilde{\Phi} : \QCoh(\widetilde A^{\vee}) \rightarrow \cal D^{(0)}\Mod(A) ,\quad \cal F \leadsto R\widetilde q_*(\widetilde p^*\cal F \overset{L}{\otimes} \widetilde{\cal P})
\end{equation}
defines an equivalence of $W_n$-linear $\infty$-categories, making the following diagram commutes:
\begin{equation}
\label{eq-fml-transform-oblv}
\xysmall{
	\QCoh(\widetilde A^{\vee}) \ar[r]^-{\widetilde{\Phi}}\ar[d]^{\pi_*} & \cal D^{(0)}\Mod(A) \ar[d]^{\op{oblv}_{\cal D^{(0)}}} \\
	\QCoh(A^{\vee}) \ar[r]^-{\Phi} & \QCoh(A)
}
\end{equation}
\end{lem}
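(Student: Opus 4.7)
The plan is to split the assertion into two parts: commutativity of the square \eqref{eq-fml-transform-oblv}, and the claim that $\widetilde{\Phi}$ is an equivalence of $W_n$-linear $\infty$-categories. The first part is a formal base-change manipulation; the second reduces, modulo a matching of additional structure, to the classical Fourier--Mukai equivalence $\Phi$.

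For commutativity, I would first observe that the underlying $\cal O$-module of $\widetilde{\cal P}$ on $\widetilde A^{\vee}\underset{W_n}{\times} A$ coincides, by construction, with the pullback $(\pi \times \op{id}_A)^* \cal P$; the only additional datum encoded in $\widetilde{\cal P}$ is the integrable connection along $A$. The square with horizontal arrows $\pi \times \op{id}_A$, $\pi$ and vertical arrows $\widetilde p$, $p$ is Cartesian, so flat base change gives $(\pi \times \op{id}_A)_* \widetilde p^* \cal F \simeq p^* \pi_* \cal F$; the projection formula along $\pi \times \op{id}_A$ then yields $\op{oblv}_{\cal D^{(0)}} \widetilde\Phi(\cal F) \simeq Rq_*(p^* \pi_* \cal F \overset{L}{\otimes} \cal P) = \Phi(\pi_* \cal F)$, giving commutativity.

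For the equivalence, I would construct a candidate quasi-inverse $\widetilde \Psi : \cal D^{(0)}\Mod(A) \to \QCoh(\widetilde A^{\vee})$ via the dual integral transform $\cal G \leadsto R\widetilde p_*(\widetilde q^* \cal G \overset{L}{\otimes} \widetilde{\cal P}^{\vee})[\dim A]$, where $\widetilde{\cal P}^{\vee}$ carries the induced $\cal D^{(0)}$-structure along $A$ so that the push-forward along $\widetilde p$ effectively integrates out the connection. Verifying $\widetilde \Psi \circ \widetilde \Phi \simeq \op{id}$ reduces to computing the integral-transform kernel on $\widetilde A^{\vee} \underset{W_n}{\times} \widetilde A^{\vee}$ given by push-pull of $\widetilde{\cal P} \boxtimes \widetilde{\cal P}^{\vee}$ along the three projections from the triple product $\widetilde A^{\vee} \times A \times \widetilde A^{\vee}$, and identifying it with $\Delta_* \cal O_{\widetilde A^{\vee}}$. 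The analogous statement for $\widetilde\Phi \circ \widetilde\Psi$ is symmetric.

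The main obstacle will be precisely this identification of the composed kernel: classically, the Fourier--Mukai--Rouquier formula yields the analogous kernel on $A^{\vee} \times A^{\vee}$ as $\Delta_* \cal O_{A^{\vee}}[-\dim A]$, and one must show that the $\cal D^{(0)}$-structures on the two factors $\widetilde{\cal P}$ and $\widetilde{\cal P}^{\vee}$ combine to collapse the ``torsor direction'' of $\pi : \widetilde A^{\vee} \to A^{\vee}$ after integration along $A$. Concretely, the additive extension $\op H^0(\Omega_{A/W_n}) \to \widetilde A^{\vee} \to A^{\vee}$ must match, under Fourier--Mukai duality, with the difference between $\QCoh(A)$ and $\cal D^{(0)}\Mod(A)$, and this matching is precisely what is enforced by the $\cal D^{(0)}$-structure on $\widetilde{\cal P}$. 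This is the heart of Laumon's original theorem \cite[Th\'eor\`eme 3.2.1]{La96}, which one may either invoke directly in the present setting over $W_n$, or reverify by unwinding the moduli description of $\widetilde A^{\vee}$ as classifying pairs $(\cal L, \nabla)$ and checking the universal property against the tautological $\cal D^{(0)}$-action.
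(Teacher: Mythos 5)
Your argument for the commutativity of the square is correct and essentially the formal manipulation the paper has in mind: $\widetilde{\cal P}$ is, as an $\cal O$-module, the pullback of $\cal P$ along $\pi\times\op{id}_A$, and flat base change plus the projection formula along $\pi\times\op{id}_A$ gives $\op{oblv}_{\cal D^{(0)}}\widetilde\Phi(\cal F)\simeq\Phi(\pi_*\cal F)$.

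The equivalence part, however, has a real gap, and it is precisely the one the paper is at pains to flag. Your strategy (construct the dual transform $\widetilde\Psi$, reduce to identifying the composed kernel on $\widetilde A^{\vee}\times\widetilde A^{\vee}$ with $\Delta_*\cal O_{\widetilde A^{\vee}}$) is morally the same as Laumon's, and the two computations that identification requires are exactly the two isomorphisms the paper lists: $\widetilde p_{*,\dR}(\widetilde{\cal P})\simeq\widetilde\epsilon_*\cal O_S[-g]$ and $R\widetilde q_*\widetilde{\cal P}\simeq\epsilon_{*,\dR}(\cal O_S)$. But your suggested shortcut --- ``invoke [Th\'eor\`eme 3.2.1 of La96] directly in the present setting over $W_n$'' --- does not work: Laumon's proof of the second isomorphism appeals to Kashiwara's lemma for $\cal D$-modules, which fails outside characteristic zero, so the theorem as stated and proved in \emph{loc.~cit.} does not cover the base $W_n$. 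Your fallback (``reverify by unwinding the moduli description'') names the right move but supplies no argument where one is needed. The paper's actual content here is a replacement for the Kashiwara step: it exhibits a canonical morphism $\epsilon_{*,\dR}(\cal O_S)\to R\widetilde q_*\widetilde{\cal P}$ by adjunction and base change, puts compatible finite filtrations on both sides (by order of differential operator on the left, by Laumon's filtration of $\cal O_{\widetilde A^{\vee}}$ as an $\cal O_{A^{\vee}}$-module on the right), and checks the map is an isomorphism on associated gradeds using the known computation of $Rq_*\cal P$. Without that, or some equivalent characteristic-free argument, the proof of the equivalence is incomplete.
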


\noindent
The result in \emph{loc.cit.~}is proved when the base is in characteristic zero, although this assumption is superfluous. We indicate the necessary modification of Laumon's argument which proves the result for an abelian scheme $A$ over any locally Noetherian base scheme $S$. (We then apply the result to $S=W_n$.)

\begin{proof}[Sketch of proof]
The Lemma follows formally from two canonical isomorphisms:
\begin{enumerate}[(a)]
	\item The $\cal D^{(0)}$-module pushforward along $\widetilde p$ of $\widetilde{\cal P}$:
	$$
	\widetilde p_{*,\dR}(\widetilde{\cal P}) \xrightarrow{\sim} R\widetilde p_* (\widetilde{\cal P} \xrightarrow{\nabla} \widetilde{\cal P}\otimes\Omega_{A}^1\xrightarrow{\nabla} \cdots)
	$$
	ought to be identified with $\widetilde{\epsilon}_*\cal O_S[-g]$, where $\widetilde{\epsilon}: S\rightarrow\widetilde A^{\vee}$ is the identity;
	
	\medskip
	
	\item The pushforward along $\widetilde q$ of $\widetilde{\cal P}$ ought to be identified with $\epsilon_{*,\dR}(\cal O_S)$, where $\epsilon_{*,\dR}$ is the $\cal D^{(0)}$-module pushforward along the identity $\epsilon : S\rightarrow A$.\footnote{We remark that $\epsilon_{*,\dR}(\cal O_S)$, considered as a left $\cal D^{(0)}$-module, lives in cohomological degree $g$.}
\end{enumerate}
\noindent
The argument in \emph{loc.cit.}~works verbatim for the first identification, but it appeals to Kashiwara's lemma for the second identification, which does \emph{not} hold outside characteristic zero. We will provide this isomorphism in a more direct manner. First, there is a canonically defined morphism of $\cal D_{A/S}^{(0)}$-modules:
\begin{equation}
\label{eq-laumon-isom}
\epsilon_{*,\dR} (\cal O_S) \rightarrow R\widetilde q_*\widetilde{\cal P},
\end{equation}
obtained from adjunction by a map of $\cal O_S$-modules $\cal O_S \rightarrow L\epsilon^* R\widetilde q_*\widetilde{\cal P}$, which in turn comes from base change. Next, we observe that $R\widetilde q_*\widetilde{\cal P}$ identifies with $Rq_*(\cal O_{\widetilde A^{\vee}} \otimes \cal P)$. By \S2.3 of \emph{loc.cit.}, $\cal O_{\widetilde A^{\vee}}$ admits a filtration $F^{\le i}\cal O_{\widetilde A^{\vee}}$ as an $\cal O_{A^{\vee}}$-module, such that:
$$
F^{\le i}\cal O_{\widetilde A^{\vee}}/F^{\le i-1}\cal O_{\widetilde A^{\vee}} \xrightarrow{\sim}\op{Sym}^i(\epsilon^*\cal T_{A/S})\underset{\cal O_S}{\otimes}\cal O_{A^{\vee}}.
$$
Since $Rq_*(\cal P)$ is canonically isomorphic to $\epsilon_*\epsilon^*\omega_{A/S}^{\otimes -1}[-g]$ (Lemme 1.2.5 of \emph{loc.cit.}), we see that $R\widetilde q_*\widetilde{\cal P}$ is concentrated in cohomological degree $g$ and inherits a filtration by $\cal O_A$-submodules. On the other hand, $\epsilon_{*,\dR}(\cal O_S)$ admits a filtration as an $\cal O_A$-module by order of differential operators in $\cal D^{(0)}_{A/S}$, whose associated graded pieces are given by:
$$
F^{\le i}\epsilon_{*,\dR}(\cal O_S)/F^{\le i-1}\epsilon_{*,\dR}(\cal O_S) \xrightarrow{\sim} \op{Sym}^i(\epsilon^*\cal T_{A/S}) \underset{\cal O_S}{\otimes} \epsilon_*\epsilon^*\omega_{A/S}^{\otimes -1}[-g];
$$
here, the factor $\omega_{A/S}^{\otimes -1}[-g]$ arises from passing the $\cal D_{A/S}^{(0)}$-action from right to left. The morphism \eqref{eq-laumon-isom} is compatible with the filtrations, i.e., the following diagram commutes:
$$
\xysmall{
	F^{\le i}\epsilon_{*,\dR}(\cal O_S) \ar[r]\ar[d] & F^{\le i} R\widetilde q_*\widetilde{\cal P} \ar[d] \\
	\epsilon_{*,\dR}(\cal O_S) \ar[r] & R\widetilde q_*\widetilde{\cal P}
}
$$
Indeed, this is a consequence of the two facts below:
\begin{enumerate}[(a)]
	\item the adjoint morphism $\cal O_S \rightarrow L\epsilon^* Rq_*(\cal O_{\widetilde A^{\vee}}\otimes\cal P)$ of \eqref{eq-laumon-isom} factors through
	$$
	L\epsilon^* Rq_*(F^{\le 0}\cal O_{\widetilde A^{\vee}}\otimes \cal P) \cong L\epsilon^* Rq_*\cal P;
	$$
	\item the filtration on $R\widetilde q_*\widetilde{\cal P}$ is compatible with $\cal D_{A/S}^{(0)}$-action, in the sense that:
	$$
	F^{\le i}\cal D_{A/S}^{(0)} \cdot F^{\le j} R\widetilde q_*\widetilde{\cal P} \subset F^{\le i+j} R\widetilde q_*\widetilde{\cal P}.
	$$
\end{enumerate}
\noindent
Thus it remains to show that \eqref{eq-laumon-isom} induces an isomorphism on each graded piece. On the $i$th associated graded piece, it induces the following map:
$$
\op{Sym}^i(\epsilon^*\cal T_{A/S}) \underset{\cal O_S}{\otimes} \epsilon_*\epsilon^*(\omega_{A/S}^{\otimes -1})[-g] \xrightarrow{\op{id}\otimes \alpha} \op{Sym}^i(\epsilon^*\cal T_{A/S}) \underset{\cal O_S}{\otimes} Rq_*\cal P,
$$
where $\alpha$ is the canonical isomorphism $\epsilon_*\epsilon^*(\omega_{A/S}^{\otimes -1})[-g] \xrightarrow{\sim} Rq_*\cal P$.
\end{proof}

\subsubsection{} We show that the Fourier--Mukai--Laumon transform satisfies the Frobenius compatibility alluded to in the introduction. We emphasize that there is no analogous diagram for the usual Fourier--Mukai transform $\Phi$ beyond the special fiber.

\begin{lem}
The following diagram commutes:
\begin{equation}
\label{eq-frob-ver}
\xysmall{
	\QCoh(\widetilde A^{\vee}) \ar[r]^-{\widetilde{\Phi}}\ar[d]^{\op{Ver}_*} & \cal D^{(0)}\Mod(A) \ar[d]^{\op{Fr}^*_A} \\
	\QCoh(\widetilde A^{\vee})  \ar[r]^-{\widetilde{\Phi}} & \cal D^{(0)}\Mod(A)
}
\end{equation}
\end{lem}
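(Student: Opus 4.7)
The plan is to reduce the commutativity of the diagram to a single canonical identification of the universal line bundle $\widetilde{\cal P}$ with its pullbacks, coming directly from the moduli interpretation of $\op{Ver}$, and then apply base change and the projection formula.

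The conceptual heart of the argument is the following. By the universal property of $\widetilde A^\vee$ as classifying multiplicative line bundles with integrable connection on $A$, and by the very definition of $\op{Ver}$ in \eqref{eq-ver-defn} as Frobenius pullback of such data, there is a canonical isomorphism
\begin{equation*}
(\op{Ver}\times\op{id}_A)^*\widetilde{\cal P} \;\cong\; (\op{id}_{\widetilde A^\vee}\times\op{Fr}_A)^*\widetilde{\cal P}
\end{equation*}
of line bundles on $\widetilde A^\vee\underset{W_n}{\times}A$ equipped with integrable connection along $A$, i.e., as $\cal D^{(0)}$-modules relative to $\widetilde A^\vee$. Since Berthelot's Frobenius pullback on $\cal D^{(0)}$-modules is canonically independent of the chosen local lift of Frobenius on $A$, this statement can be (and should be) verified locally on $A$ where a lift $\op{Fr}_A$ exists, at which point it becomes tautological from the construction of $\op{Ver}$.

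Granting this, the proof is a formal manipulation. Working locally to pick a Frobenius lift $\op{Fr}_A$ on $A$, consider the two cartesian squares
\begin{equation*}
\xysmall{
\widetilde A^\vee\times A \ar[r]^{\op{Ver}\times\op{id}} \ar[d]_{\widetilde p} & \widetilde A^\vee\times A \ar[d]^{\widetilde p} \\
\widetilde A^\vee \ar[r]^{\op{Ver}} & \widetilde A^\vee
}\qquad
\xysmall{
\widetilde A^\vee\times A \ar[r]^{\op{id}\times\op{Fr}_A} \ar[d]_{\widetilde q} & \widetilde A^\vee\times A \ar[d]^{\widetilde q} \\
A \ar[r]^{\op{Fr}_A} & A
}
\end{equation*}
Starting from $\widetilde\Phi(\op{Ver}_*\cal F)=R\widetilde q_*(\widetilde p^*\op{Ver}_*\cal F\otimes\widetilde{\cal P})$, base change along the left square rewrites $\widetilde p^*\op{Ver}_*\cal F$ as $(\op{Ver}\times\op{id})_*\widetilde p^*\cal F$, and the projection formula combined with $\widetilde q\circ(\op{Ver}\times\op{id})=\widetilde q$ gives
\begin{equation*}
\widetilde\Phi(\op{Ver}_*\cal F)\;\cong\;R\widetilde q_*\bigl(\widetilde p^*\cal F\otimes(\op{Ver}\times\op{id})^*\widetilde{\cal P}\bigr).
\end{equation*}
Applying the key canonical isomorphism, the right-hand side equals $R\widetilde q_*(\widetilde p^*\cal F\otimes(\op{id}\times\op{Fr}_A)^*\widetilde{\cal P})$; one more application of the projection formula and base change along the right square, together with $\widetilde p\circ(\op{id}\times\op{Fr}_A)=\widetilde p$, identifies this with $\op{Fr}_A^*R\widetilde q_*(\widetilde p^*\cal F\otimes\widetilde{\cal P})=\op{Fr}_A^*\widetilde\Phi(\cal F)$, as required.

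The main obstacle will be showing that the composite natural isomorphism is truly canonical and $\cal D^{(0)}$-linear, independent of the local choice of Frobenius lift on $A$; this is what permits the local construction to glue to the global statement. Concretely, one must check that the key identification $(\op{Ver}\times\op{id})^*\widetilde{\cal P}\cong(\op{id}\times\op{Fr}_A)^*\widetilde{\cal P}$ transforms correctly under changes of Frobenius lift via the divided-power stratification data implicit in Berthelot's definition of $\op{Fr}_A^*$, and that the base-change and projection-formula isomorphisms used in the derivation respect this $\cal D^{(0)}$-structure throughout. Once this coherence is in place, the diagram \eqref{eq-frob-ver} commutes by the chain of canonical isomorphisms above.
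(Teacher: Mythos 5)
Your proposal follows essentially the same route as the paper's own proof: the paper records precisely the key isomorphism $(\op{Ver}\times\op{id})^*\widetilde{\cal P}\xrightarrow{\sim}\op{Fr}_A^*\widetilde{\cal P}$ as $\cal D^{(0)}_{\widetilde A^\vee\times A/\widetilde A^\vee}$-modules (your $(\op{id}\times\op{Fr}_A)^*\widetilde{\cal P}$, written in terms of a local lift) and then invokes projection formula and base change along the same two squares, with dotted arrows marking that $\op{Fr}_A$ exists only affine-locally; your unwinding of the base-change and projection-formula steps, and your flagging of the local-to-global coherence issue for the key isomorphism, make explicit what the paper leaves terse.
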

\begin{proof}
By definition of the Verschiebung endomorphism, we have an isomorphism:
\begin{equation}
\label{eq-univ-pullback}
(\op{Ver} \times \op{id})^*\widetilde{\cal P} \xrightarrow{\sim} \op{Fr}_A^*\widetilde{\cal P}
\end{equation}
of $\cal D^{(0)}_{\widetilde A^{\vee} \times A/\widetilde A^{\vee}}$-modules. Now, consider the following commutative diagram:
$$
\xymatrix@R=1em@C=2em{
	& \widetilde A^{\vee} \times A \ar[dl]_{\widetilde p}\ar[dr]^{\widetilde q} &  \\
	\widetilde A^{\vee} & & A \ar@{.>}[dd]^{\op{Fr}_A} \\
	& \widetilde A^{\vee} \times A \ar[dl]_{\widetilde p} \ar[ur]_{\widetilde q} \ar[uu]^{\op{Ver}\times\op{id}} \ar@{.>}[dd]^{\op{id}\times\op{Fr}_A} & \\
	\widetilde A^{\vee} \ar[uu]^{\op{Ver}} & & A \\
	& \widetilde A^{\vee} \times A \ar[ul]^{\widetilde p} \ar[ur]_{\widetilde q} &
}
$$
where the dotted arrows indicate that $\op{Fr}_A$ is only well defined affine locally on $A$, but suffices to define the pullback of $\cal D^{(0)}$-modules. Using the projection formula, both circuits in \eqref{eq-frob-ver} can be identified with pulling back along $\widetilde p$, tensoring with the $\cal D_{\widetilde A^{\vee}\times A/\widetilde A^{\vee}}^{(0)}$-module \eqref{eq-univ-pullback}, and then pushing forward along $\widetilde q$.
\end{proof}

\subsubsection{} We now let $\QCoh(\widetilde A^{\vee})^{\op{Ver}_*}$ and $\cal D^{(0)}\Mod(A)^{\op{Fr}^*}$ be the categorical fixed points (c.f.~\S\ref{sec-cat-fix}) of these $W_n$-linear $\infty$-categories under the endomorphisms $\op{Ver}_*$ and $\op{Fr}_A^*$. It follows from the Lemma that we have a $W_n$-linear equivalence:
$$
	\QCoh(\widetilde A^{\vee})^{\op{Ver}_*} \xrightarrow{\sim} \cal D^{(0)}\Mod(A)^{\op{Fr}^*}.
$$

\subsubsection{} Finally, setting $A:=\op{Jac}_n$ and using the auto-duality explained in \S\ref{sec-jac}, we obtain an equivalence:
\begin{equation}
\label{eq-final-transform}
\QCoh(\widetilde{\op{Jac}}_n)^{\op{Ver}_*} \xrightarrow{\sim} \cal D^{(0)}\Mod(\op{Jac}_n)^{\op{Fr}^*}
\end{equation}
which makes the following diagram commute:
$$
\xysmall{
	\QCoh(\widetilde{\op{Jac}}_n)^{\op{Ver}_*} \ar[r]^-{\eqref{eq-final-transform}}\ar[d]^{\op{oblv}} & \cal D^{(0)}\Mod(\op{Jac}_n)^{\op{Fr}^*} \ar[d]^{\op{oblv}} \\
	\QCoh(\widetilde{\op{Jac}}_n) \ar[r]^-{\widetilde{\Phi}} & \cal D^{(0)}\Mod(\op{Jac}_n)
}
$$

\subsection{The $p$-torsion Artin reciprocity functor}
\label{sec-artin-recip}

\subsubsection{} Consider the $\infty$-category consisting of an object $\cal F\in\Coh(\widetilde{\op{Jac}}{}_n^{\natural})$ together with an automorphism $\alpha$. It can be regarded as fixed points under the identity endomorphism on $\Coh(\widetilde{\op{Jac}}{}_n^{\natural})$, denoted by $\op{Coh}(\widetilde{\op{Jac}}{}_n^{\natural})^{\op{id}}$. Note that there is a functor:
\begin{equation}
\label{eq-geom-to-cat-funct}
\Coh(\widetilde{\op{Jac}}{}_n^{\natural})^{\op{id}} \rightarrow \Coh(\widetilde{\op{Jac}}_n)^{\op{Ver}_*},
\end{equation}
defined by sending $(\cal F, \alpha)$ to the coherent sheaf $\iota_*\cal F$ together with the isomorphism:
$$
\iota_*\cal F \xrightarrow{\iota_*\alpha} \iota_*\cal F \xrightarrow{\sim} \op{Ver}_*\iota_*\cal F.
$$
Here, $\iota$ denotes the closed immersion of $\widetilde{\op{Jac}}{}_n^{\natural}$ in $\widetilde{\op{Jac}}_n$.

\subsubsection{} To summarize, we have constructed a commutative diagram:
\begin{equation}
\label{eq-transform-summary}
\xysmall{
\Coh(\widetilde{\op{Jac}}{}_n^{\natural})^{\op{id}} \ar[r]^-{\eqref{eq-geom-to-cat-funct}} &  \Coh(\widetilde{\op{Jac}}_n)^{\op{Ver}_*} \ar[r]^-{\eqref{eq-final-transform}} \ar[d]^{\op{oblv}} & \cal D^{(0)}\Mod(\op{Jac}_n)^{\op{Fr}^*} \ar[d]^{\op{oblv}} \\
 & \Coh(\widetilde{\op{Jac}}_n) \ar[r]^-{\widetilde{\Phi}} \ar[d]^{\pi_*} & \cal D^{(0)}\Mod(\op{Jac}_n) \ar[d]^{\op{oblv}_{\cal D^{(0)}}} \\
 & \Coh(\op{Jac}_n) \ar[r]^-{\Phi} & \QCoh(\op{Jac}_n)
}
\end{equation}

\subsubsection{} We write $\Coh(\widetilde{\op{Jac}}{}_n^{\natural})^{\op{id}}_{\circ}$ for the full subcategory of $\Coh(\widetilde{\op{Jac}}{}_n^{\natural})^{\op{id}}$ consisting of $(\cal F, \alpha)$ where $\cal F$ has finite Tor dimension over the base ring $W_n$.

\begin{lem}
Under the upper horizontal arrow of \eqref{eq-transform-summary}, the essential image of $\Coh(\widetilde{\op{Jac}}{}_n^{\natural})^{\op{id}}_{\circ}$ lies inside $(\cal D^{(0)}\Mod(\op{Jac}_n)^{\op{Fr}^*})_{\op{lfg-u}, \circ}$.
\end{lem}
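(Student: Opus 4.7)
Write $\cal M := \widetilde\Phi(\iota_*\cal F)$, equipped with the Frobenius structure $\psi_{\cal M}$ induced via the upper composition in \eqref{eq-transform-summary}, for an input $(\cal F,\alpha_{\cal F}) \in \op{Coh}(\widetilde{\op{Jac}}_n^{\natural})^{\mathbb Z}_{\circ}$. Membership in $(\cal D^{(0)}\Mod(\op{Jac}_n)^{\op{Fr}^*})_{\op{lgfu}, \circ}$ amounts to four conditions: boundedness of $\cal M$; the unit property of $\psi_{\cal M}$ at the cohomology level; the lgfu condition on each cohomology sheaf; and finite Tor amplitude of each cohomology sheaf over $\cal O_{\op{Jac}_n}$. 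The unit property is automatic, since the target of the upper composition is the \emph{genuine} (not lax) fixed-point $\infty$-category, so $\psi_{\cal M}$ is an equivalence; by the remark following Lemma \ref{lem-frob-module-heart}, each cohomology is then an honest unit $F$-crystal. The automorphism datum $\alpha_{\cal F}$ (the $\mathbb Z$-equivariance) is merely transported through the functors and plays no role in the argument.

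I would deduce the remaining three conditions from the finiteness of $\widetilde{\op{Jac}}_n^{\natural}$ over $W_n$, which is the punchline of Lemma \ref{lem-univ-reg}. The complex $\iota_*\cal F$ is bounded with coherent cohomology supported inside the $W_n$-finite closed subscheme $\widetilde{\op{Jac}}_n^{\natural} \subset \widetilde{\op{Jac}}_n$. After the flat pullback along $\widetilde p$ and tensoring with the line bundle $\widetilde{\cal P}$, the support of $\widetilde p^*\iota_*\cal F \otimes \widetilde{\cal P}$ lies in $\widetilde{\op{Jac}}_n^{\natural} \underset{W_n}{\times}\op{Jac}_n$, which is finite over $\op{Jac}_n$ via $\widetilde q$. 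Hence $R\widetilde q_*$ of this complex coincides with the pushforward along a finite map --- in particular it is $t$-exact and preserves $\cal O$-coherence. Consequently $\cal M$ is a bounded complex of $\cal O_{\op{Jac}_n}$-coherent $\cal D^{(0)}$-modules, each cohomology sheaf $\cal H^i\cal M$ is itself $\cal O_{\op{Jac}_n}$-coherent, and the lgfu condition holds trivially with $\cal M_0 := \cal H^i\cal M$ (the action map $\cal D^{(0)}_{F,\op{Jac}_n} \otimes_{\cal O} \cal H^i\cal M \to \cal H^i\cal M$ is already surjective via $1\otimes\op{id}$).

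For finite Tor amplitude over $\cal O_{\op{Jac}_n}$ I would invoke the flatness of $\op{Jac}_n$ over $W_n$ coming from smoothness. A bounded flat resolution $P_\bullet\to \cal F$ by $W_n$-modules, which exists by the Tor-finiteness hypothesis on $\cal F$, produces after flat pullback, tensor with the locally free $\widetilde{\cal P}$, and finite pushforward a bounded resolution of $\cal M$ by $\cal O_{\op{Jac}_n}$-flat sheaves; flatness is preserved at each step because $\cal O_{\op{Jac}_n}$ is flat over $W_n$. Zariski-locally on $\op{Jac}_n$, trivializing $\widetilde{\cal P}$ over each component $\widetilde{\op{Jac}}_{n,\sigma}^{\natural}\underset{W_n}{\times}\op{Jac}_n$, the resulting module is simply the base change $\cal F_{\sigma}\underset{W_n}{\otimes}\cal O_{\op{Jac}_n}$, whose Tor amplitude matches that of $\cal F_\sigma$. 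The only substantive input throughout is the finiteness of $\widetilde{\op{Jac}}_n^{\natural}$ over $W_n$ already in hand; everything else is an unwinding of the Fourier--Mukai--Laumon formula, so no genuine obstacle remains.
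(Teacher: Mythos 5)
Your proposal is correct, and while the coherence and lgfu steps track the paper's logic fairly closely (both ultimately come down to the observation that $\iota_*\cal F$ has support in the $W_n$-finite subscheme $\widetilde{\op{Jac}}_n^{\natural}$, so that the Laumon kernel is pushed forward along a finite morphism), your argument for finite Tor amplitude over $\cal O_{\op{Jac}_n}$ takes a genuinely different and more elementary route. The paper first observes that $\cal M$ has finite Tor dimension over $W_n$ by $W_n$-linearity, restricts to the special fiber $\op{Jac}_1$, invokes \cite[Proposition 6.9.3]{EK04} (coherent unit $F$-crystals on a smooth variety over a perfect field are locally free) to conclude perfection of $\cal M|_{\op{Jac}_1}$, and then applies a general commutative-algebra lemma asserting that bounded Tor amplitude can be detected fiberwise at closed points. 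You instead never leave the Fourier--Mukai--Laumon formula: because the source object is supported on the finite $W_n$-scheme $\widetilde{\op{Jac}}_n^{\natural}$, the transform is locally (on $\op{Jac}_n$, after trivializing $\widetilde{\cal P}$ over each component $\widetilde{\op{Jac}}_{n,\sigma}^{\natural}\times_{W_n}U$, which is legitimate by nilpotent lifting of trivializations since $\widetilde{\op{Jac}}_{n,\sigma}^{\natural}$ is an infinitesimal thickening of a point) just the base change $\cal F_\sigma\otimes_{W_n}\cal O_{\op{Jac}_n}$, so the flatness of $\op{Jac}_n$ over $W_n$ immediately transfers the Tor amplitude. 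Your route bypasses the nontrivial local-freeness theorem for $F$-crystals and makes the bound on the amplitude completely transparent; the paper's route has the advantage of being insensitive to the particular form of the integral kernel, relying only on the fact that the output is a complex of $\cal O$-coherent $F$-crystals with finite Tor dimension over $W_n$, and it extracts a reusable fiberwise criterion for Tor amplitude. Both are valid; you should just make sure to spell out the nilpotent-lifting step in the trivialization, since that is the one point where the local structure of $\widetilde{\op{Jac}}_{n,\sigma}^{\natural}$ is actually used.
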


\noindent
We refer the reader to \S\ref{sec-f-crys-fixed-points} for the notation.

\begin{proof}
Take $(\cal F, \alpha) \in \Coh(\widetilde{\op{Jac}}{}_n^{\natural})^{\op{id}}_{\circ}$ whose image in $\cal D^{(0)}\Mod(\op{Jac}_n)^{\op{Fr}^*}$ is denoted by $\cal M$. The underlying object in $\QCoh(\op{Jac}_n)$ of $\cal M$ is \emph{coherent} by the commutativity of the diagram \eqref{eq-transform-summary}. This shows that it lies inside $(\cal D^{(0)}\Mod(\op{Jac}_n)^{\op{Fr}^*})_{\op{lfg-u}}$. It remains to check that $\cal M$ has finite Tor dimension over the structure sheaf of $\op{Jac}_n$. We note:
\begin{enumerate}[(a)]
	\item $\cal M$ has finite Tor dimension over $W_n$; indeed, this is because all functors involved are $W_n$-linear and $\cal F$ has finite Tor dimension over $W_n$. In particular, this implies that the (derived) restriction $\cal M|_{\op{Jac}_1}$ to the special fiber is a bounded complex of coherent sheaves.
	\item The restriction $\cal M|_{\op{Jac}_1}$ is in fact perfect; indeed, this is because each of its cohomology groups is an $\cal O$-coherent $F$-crystal, and we may apply \cite[Proposition 6.9.3]{EK04} to conclude that it is locally free.
\end{enumerate}
We let $\cal P^{\bullet} \in \Coh(\op{Jac}_1)^{\ge a, \le b}$ be the perfect complex representing $\cal M\big|_{\op{Jac}_1}$. For every closed point $y\in\op{Jac}_n$ (necessarily factoring through $\op{Jac}_1$), we have
$$
Li_y^*\cal M \xrightarrow{\sim} \cal P^{\bullet} \otimes k_y \in \op{Vect}^{\ge a, \le b}.
$$
This implies that $\cal M$ has Tor amplitude $[a,b]$ over $\op{Jac}_n$ by the following general fact.
\end{proof}

\begin{lem}
Suppose $Y$ is a Noetherian scheme, and $\cal M\in \Coh(Y)$ has the property that $Li_y^*\cal M \in\op{Vect}^{\ge a, \le b}$ for all closed points $y\in Y$. Then $\cal M$ has Tor amplitude $[a,b]$.
\end{lem}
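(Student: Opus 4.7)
The plan is to reduce to a purely local statement and then exploit a minimal free resolution. Since having Tor amplitude $[a,b]$ is a stalk-local condition, and since every point of the Noetherian scheme $Y$ specializes to a closed point (with Tor amplitude preserved under further localization), it suffices to prove the following: if $(R,\fr m,k)$ is a Noetherian local ring and $\cal M\in\Coh(\Spec R)$ satisfies $\cal M\otimes^L_R k\in\op{Vect}^{\ge a,\le b}$, then $\cal M$ has Tor amplitude $[a,b]$ over $R$.

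For this local statement I would choose a \emph{minimal} free resolution $P^\bullet\xrightarrow{\sim}\cal M$: a bounded-above complex of finitely generated free $R$-modules whose differentials all satisfy $d(P^i)\subset\fr m\cdot P^{i+1}$. Such resolutions exist for any bounded complex of finitely generated $R$-modules, constructed inductively from the top cohomological degree of $\cal M$ by lifting a minimal set of generators at each stage. Minimality forces the differentials of $P^\bullet\otimes_R k$ to vanish, so
$$
H^i(\cal M\otimes^L_R k)\;\cong\; P^i\otimes_R k\quad\text{for every }i.
$$
The hypothesis then gives $P^i\otimes_R k=0$ for $i\notin[a,b]$, and Nakayama's lemma, applied to the finitely generated module $P^i$, forces $P^i=0$ for $i\notin[a,b]$.

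Consequently $P^\bullet$ is a bounded complex of finitely generated free $R$-modules concentrated in degrees $[a,b]$, and tensoring with any $R$-module $N$ yields a complex whose cohomology is supported in $[a,b]$. This is precisely Tor amplitude $[a,b]$ for $\cal M$. The main technical ingredient is the construction of the minimal free resolution and the verification of minimality; once that is in hand, the rest is a direct application of Nakayama. I do not expect any essential obstacle from passing between the ring-theoretic and scheme-theoretic settings, as the relevant derived tensor products and their cohomology sheaves are computed stalk by stalk.
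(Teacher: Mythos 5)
Your proof is correct and follows essentially the same strategy as the paper's: reduce to a Noetherian local ring, take a minimal free resolution so that the differentials die mod $\fr m$ and the derived fiber is $\bigoplus_i P^i\otimes k[-i]$, then conclude $P^i=0$ for $i\notin[a,b]$ by Nakayama. You are slightly more explicit about the reduction from closed points to all points (via specialization and stability of Tor amplitude under localization), which the paper leaves implicit, but the core argument is identical.
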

\begin{proof}
We immediately reduce to the case where $Y$ is the spectrum of a local ring. By Nakayama Lemma, we may choose a free resolution $\cal P^{\bullet}$ of $\cal M$ where each differential $\delta$ reduces to zero modulo $\fr m_y$. Thus $Li_y^*\cal M \xrightarrow{\sim} \bigoplus_i \cal P^i \underset{\cal O_Y}{\otimes} k_y[-i]$. The hypothesis then implies that $\cal P^i = 0$ for $i\notin [a,b]$.
\end{proof}

\subsubsection{} We may thus use the composition of \eqref{eq-final-transform} and \eqref{eq-geom-to-cat-funct} to define the \emph{$p$-torsion Artin reciprocity functor}:
\begin{align*}
\mathbb L_n : \Coh(\widetilde{\op{Jac}}{}_n^{\natural} )^{\op{id}}_{\circ} \rightarrow & (\cal D^{(0)}\Mod(\op{Jac}_n)^{\op{Fr}^*})_{\op{lfg-u}, \circ} \\
& \xrightarrow{\sim} W_n(\mathbb F_q)\Shv_c(\op{Jac})^{\op{op}},
\end{align*}
where the second functor is the Riemann--Hilbert correspondence of Corollary \ref{cor-ekrh}.

\begin{rem}
For a fixed character $\sigma : \pi_1(X) \rightarrow \mathbb F_q^{\times}$ trivial on $\pi_1(x_0)$, one may also regard the $p$-torsion Artin reciprocity as a functor out of $\Coh(\op{Def}_n^{\sigma})_{\circ}^{\op{id}}$ by precomposing $\mathbb L_n$ with \eqref{eq-galois-def}:
$$
\mathbb L_n^{\sigma} : \op{Coh}(\op{Def}_n^{\sigma})_{\circ}^{\op{id}} \hookrightarrow \Coh(\widetilde{\op{Jac}}{}_n^{\natural} )^{\op{id}}_{\circ} \xrightarrow{\mathbb L_n} W_n(\mathbb F_q)\Shv_c(\op{Jac})^{\op{op}}.
$$
\end{rem}

\subsubsection{} Suppose now that $X$ is equipped with a lift to the formal scheme $W := \underset{n}{\op{colim}}\,W_n$. By construction, the following diagram commutes:
$$
\xysmall{
	 \Coh(\widetilde{\op{Jac}}{}_n^{\natural})^{\op{id}}_{\circ} \ar[r]^-{\mathbb L_n} & W_n\Shv_c(\op{Jac})^{\op{op}} \\
	 \Coh(\widetilde{\op{Jac}}{}_{n+1}^{\natural})^{\op{id}}_{\circ} \ar[r]^-{\mathbb L_{n+1}} \ar[u]_{\op{Res}} & W_{n+1}\Shv_c(\op{Jac})^{\op{op}} \ar[u]_{(-)\underset{W_{n+1}}{\otimes} W_n}
}
$$
Therefore, letting $\widetilde{\op{Jac}}{}^{\natural}_{\infty}$ be the formal scheme $\underset{n}{\op{colim}}\,\widetilde{\op{Jac}}{}_n^{\natural}$, we obtain a functor:
$$
\mathbb L_{\infty} : \Coh(\widetilde{\op{Jac}}{}^{\natural}_{\infty})^{\op{id}}_{\circ} \rightarrow \lim_nW_n\Shv_c(\op{Jac})^{\op{op}}
$$
to the derived $\infty$-category of $p$-adic constructible sheaves.

\subsection{Hecke eigen-property}

\subsubsection{} We recall that the Artin reciprocity map in geometric class field theory is a homomorphism of pro-finite groups: $\theta : \op{Pic}(\mathbb F_q) \rightarrow \pi_1(X)^{\op{ab}}$. It induces a map:
$$
\theta^* : \{W_n^{\times}\text{-characters of }\pi_1(X)\} \rightarrow \{\text{morphisms }\op{Pic}(\mathbb F_q) \rightarrow W_n^{\times}\}.
$$
Using $\mathbb L_n$, we will reconstruct the part of $\theta^*$ on $W_n^{\times}$-characters which are trivial on $\pi_1(x_0)$.

\subsubsection{} Given a character $\rho : \pi_1(X) \rightarrow W_n^{\times}$ trivial on $\pi_1(x_0)$, we will construct a character sheaf $\op{Aut}_{\rho} \in W_n\text{-Shv}_c(\op{Jac})$ equipped with a canonical isomorphism (i.e., the Hecke eigen-property):
\begin{equation}
\label{eq-hecke-isom}
	\op{add}^!\op{Aut}_{\rho} \xrightarrow{\sim} \rho \boxtimes \op{Aut}_{\rho} [1],
\end{equation}
where $\op{add}$ denotes the morphism:
$$
	\op{add} : X\times \op{Jac} \rightarrow \op{Jac},\quad (x, \cal L)\leadsto \cal L(x - x_0).
$$
Translation by $\cal O(x_0) \in \op{Pic}^1(\mathbb F_q)$ then produces a character sheaf on $\Pic$ (still denoted by $\op{Aut}_{\rho}$), and we will set $\theta^*\rho$ to be the trace of Frobenius on $\op{Aut}_{\rho}$ at each $\mathbb F_q$-point of $\op{Pic}$.

\subsubsection{}
By Katz's equivalence (Lemma \ref{lem-katz-equiv} for $A=W_n$), $\rho$ defines a line bundle $\cal L_{\rho}$ over $X_n$ with a connection, together with an isomorphism $\op{Fr}_{X_n}^*\cal L_{\rho} \xrightarrow{\sim} \cal L_{\rho}$ and a rigidification of these data at $x_0$. Write $\cal L_{\rho^{-1}}$ for its dual, so that $\op{Sol}(\cal L_{\rho^{-1}}) \xrightarrow{\sim} \rho[1]$ under the solution functor of Emerton--Kisin (see \S\ref{sec-katz-to-ek}). It gives rise to a $W_n$-point of $\widetilde{\op{Jac}}{}_n^{\natural}$:
$$
i_{\rho^{-1}} : \Spec(W_n) \rightarrow \widetilde{\op{Jac}}{}_n^{\natural}.
$$
When equipped with the \emph{identity} automorphism, $(i_{\rho^{-1}})_*\cal O$ can be regarded as an object in $\Coh(\widetilde{\op{Jac}}{}_n^{\natural})^{\op{id}}_{\circ}$. Define $\op{Aut}_{\rho}$ as its image under $\mathbb L_n$. In other words, it is the solution complex of the object:
$$
\cal E_{\rho^{-1}} \in \cal D^{(0)}\Mod(\op{Jac}_n)^{\op{Fr}^*}_{\op{lfg-u},\circ}
$$
attached to $(i_{\rho^{-1}})_*\cal O$.

\subsubsection{}
We now show that $\op{Aut}_{\rho}$ is a character sheaf and construct the isomorphism \eqref{eq-hecke-isom}. This latter will arise from an isomorphism:
\begin{equation}
\label{eq-hecke-isom-dmod}
	\op{add}^*_n (\cal E_{\rho^{-1}}) \xrightarrow{\sim} \cal L_{\rho^{-1}} \boxtimes \cal E_{\rho^{-1}}
\end{equation}
as $\cal D^{(0)}$-modules over $X_n\underset{W_n}{\times}\op{Jac}_n$ together with a commutative diagram:
\begin{equation}
\label{eq-hecke-isom-comp}
\xysmall{
	\op{add}_n^* (\cal E_{\rho^{-1}}) \ar[d]^{\op{add}_n^*\varphi_{\cal E_{\rho^{-1}}}}\ar[r] & \cal L_{\rho^{-1}} \boxtimes \cal E_{\rho^{-1}} \ar[d]^{\varphi_{\cal L_{\rho^{-1}}}\boxtimes \varphi_{\cal E_{\rho^{-1}}} } \\
	\op{Fr}^*_{\op{Jac}_n}\op{add}_n^* (\cal E_{\rho^{-1}}) \ar[r] & \op{Fr}_{X_n}^*L_{\rho^{-1}} \boxtimes \op{Fr}_{\op{Jac}_n}^*\cal E_{\rho^{-1}}
}
\end{equation}
Here, the maps $\varphi_{\cal E_{\rho^{-1}}}$ and $\varphi_{\cal L_{\rho^{-1}}}$ are the structural maps of the respective Frobenius $\cal D^{(0)}$-modules.

\begin{rem}
The operation that we call $*$-pullback on $\cal D^{(0)}$-modules passes to $!$-pullback of \'etale $W_n$-sheaves. This explains the notational difference between \eqref{eq-hecke-isom} and \eqref{eq-hecke-isom-dmod}.
\end{rem}

\subsubsection{} Indeed, the Fourier--Mukai--Laumon transfrom of $(i_{\rho^{-1}})_*\cal O$ is a multiplicative object in $\cal D^{(0)}\Mod(\op{Jac}_n)$ whose pullback along the Abel--Jacobi map:
$$
X_n \rightarrow \op{Jac}_n,\quad x\leadsto \cal O(x - x_0)
$$
identifies with $\cal L_{\rho^{-1}}$. One thus obtains \eqref{eq-hecke-isom-dmod} from the factorization of $\op{add}_n$ as:
$$
X_n \underset{W_n}{\times} \op{Jac}_n \xrightarrow{\op{AJ}\times\op{id}} \op{Jac}_n \underset{W_n}{\times} \op{Jac}_n \xrightarrow{m} \op{Jac}_n.
$$
The commutative diagram \eqref{eq-hecke-isom-comp} is a consequence of the functoriality of the construction.

\medskip

\section{Criterion of fully faithfulness}

In this section, we prove our main result concerning the behavior of the $p$-torsion Artin reciprocity functor $\mathbb L_n$. Namely, it is fully faithful if and only if the Hasse--Witt matrix of $X$ is nilpotent.

\subsection{Main result}

\subsubsection{} We remain in the setting $k=\mathbb F_q$. Let $X$ be a smooth, proper, geometrically connected curve over $\mathbb F_q$, equipped with a smooth lift $X_n$ to the ring $W_n$. In this setting, we have defined the reciprocity functor $\mathbb L_n$ (see \S\ref{sec-artin-recip}).

\subsubsection{} Our main result on the behavior of $\mathbb L_n$ is as follows.

\begin{thm}
\label{thm-nilpotence}
The following are equivalent:
\begin{enumerate}[(a)]
	\item The functor $\mathbb L_n$ is fully faithful;
	\item The endomorphism $\op{Fr}_X^*$ on $\op H^1(X, \cal O_X)$ is nilpotent.
\end{enumerate}
\end{thm}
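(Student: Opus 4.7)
The plan is to reduce Theorem \ref{thm-nilpotence} to the abstract Proposition \ref{prop-nilpotence-abstract} and then compute the infinitesimal action of Verschiebung along the fixed locus.

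First, I would observe that $\mathbb L_n$ is the composition of three functors (cf.~\S\ref{sec-artin-recip}), of which the last two---the Fourier--Mukai--Laumon equivalence \eqref{eq-final-transform} and the Emerton--Kisin Riemann--Hilbert correspondence (Corollary \ref{cor-ekrh})---are equivalences of $\infty$-categories. Hence fully faithfulness of $\mathbb L_n$ is equivalent to fully faithfulness of the comparison functor \eqref{eq-geom-to-cat-funct}
\[
\Coh(\widetilde{\op{Jac}}_n^\natural)^{\mathbb Z}_\circ \longrightarrow \Coh(\widetilde{\op{Jac}}_n)^{\op{Ver}_*}
\]
between geometric and categorical fixed points of $\op{Ver}$.

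Next, Lemma \ref{lem-univ-reg} guarantees that $\iota : \widetilde{\op{Jac}}_n^\natural \hookrightarrow \widetilde{\op{Jac}}_n$ is a regular immersion, so I may invoke Proposition \ref{prop-nilpotence-abstract}: the comparison functor above is fully faithful if and only if the differential $d\op{Ver}$ acts nilpotently on the normal bundle $N_{\widetilde{\op{Jac}}_n^\natural/\widetilde{\op{Jac}}_n}$. Heuristically, the Koszul-type formula $L\iota^*\iota_*\cal F \cong \bigoplus_i (\Lambda^i N^\vee \otimes \cal F)[i]$ causes the mapping complexes in $\Coh(\widetilde{\op{Jac}}_n)^{\op{Ver}_*}$ to acquire higher $\op{Ext}$-contributions indexed by $\Lambda^i N^\vee$, on which $\op{Ver}_*$ acts through $\Lambda^i d\op{Ver}$; by the equalizer description of Lemma \ref{lem-fixed-point-hom}, the categorical fixed points of these contributions vanish precisely when $d\op{Ver}$ is nilpotent on $N$.

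Finally, I would carry out the explicit computation of this normal action. Since nilpotence of an endomorphism of a finitely generated module over a $p$-nilpotent local ring can be tested modulo $p$, it suffices to treat $n = 1$. By Lemma \ref{lem-univ-reg}, $\widetilde{\op{Jac}}_1^\natural$ is then a disjoint union of $\mathbb F_q$-points; since $\op{Ver}$ is a group endomorphism, translation by any fixed point $\sigma$ conjugates $\op{Ver}$ to itself and identifies the normal bundle at $\sigma$ with $\op{Lie}(\widetilde{\op{Jac}}_1)$ at the origin. The short exact sequence
\[
0 \longrightarrow H^0(X, \Omega_X) \longrightarrow \op{Lie}(\widetilde{\op{Jac}}_1) \longrightarrow H^1(X, \cal O_X) \longrightarrow 0
\]
of the universal additive extension is $d\op{Ver}$-compatible, and the commutative diagram appearing in the proof of Lemma \ref{lem-univ-reg} shows that $d\op{Ver}$ acts as $0$ on $H^0(X, \Omega_X)$ (since Frobenius pullback kills differentials in characteristic $p$) and as $\op{Fr}_X^*$ on $H^1(X, \cal O_X)$ (its Serre dual). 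Consequently, $d\op{Ver}$ is nilpotent on $\op{Lie}(\widetilde{\op{Jac}}_1)$ if and only if $\op{Fr}_X^*$ is nilpotent on $H^1(X, \cal O_X)$, which is the claim.

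The hard part will be establishing Proposition \ref{prop-nilpotence-abstract} itself. Its core must be a precise analysis, via Lemma \ref{lem-fixed-point-hom} and the Koszul decomposition of $L\iota^*\iota_*$, of how the equalizer defining $\op{Hom}$ in $\Coh(Y)^{V_*}$ interacts with the action of $dV$ on conormal directions---showing that the higher Ext summands disappear upon taking categorical fixed points exactly under nilpotence; conversely, when $dV$ has a nonzero eigenvalue on $N$, one must produce explicit extra morphisms or extensions in the target with no preimage on the geometric side.
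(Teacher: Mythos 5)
Your proposal follows essentially the same route as the paper: fully faithfulness of $\mathbb L_n$ is reduced to fully faithfulness of the comparison functor \eqref{eq-geom-to-cat-funct} by observing the other two factors are equivalences; Lemma \ref{lem-univ-reg} is invoked to apply Proposition \ref{prop-nilpotence-abstract}; and the normal/tangent action of $d\op{Ver}$ is computed on the special fiber at the identity using the universal-additive-extension sequence, with $d\op{Ver}$ vanishing on $H^0(\Omega_X)$ and acting as $\op{Fr}_X^*$ on $H^1(X, \cal O_X)$. Your sketch of Proposition \ref{prop-nilpotence-abstract} (Koszul decomposition of $\iota^!\iota_*$ combined with Lemma \ref{lem-fixed-point-hom}) also matches the paper's argument; you correctly flag that the converse direction requires a commutative-algebra lemma to extract nilpotence from the invertibility of $\op{id} - \alpha \otimes v^\vee$.
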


\noindent
The Frobenius endomorphism on $\op H^1(X, \cal O_X)$ is known as the Hasse--Witt matrix of the curve $X$. For example, when $X$ is an elliptic curve, the endomorphism $\op{Fr}_X^*$ vanishes if and only if $X$ is supersingular.

\subsection{Reduction}

\subsubsection{} We first relate condition (b) to the geometry of the embedding of the Verschiebung-fixed locus $\widetilde{\op{Jac}}{}_n^{\natural} \hookrightarrow \widetilde{\op{Jac}}_n$ (see \S\ref{sec-verschiebung-fixed-point-locus}). Let $N_{\widetilde{\op{Jac}}{}_n^{\natural}/\widetilde{\op{Jac}}_n}$ denote the conormal sheaf of this closed immersion. Since the Verschiebung endomorphism of $\widetilde{\op{Jac}}_n$ induces the identity map on $\widetilde{\op{Jac}}{}_n^{\natural}$, it defines an endomorphism of $N_{\widetilde{\op{Jac}}{}_n^{\natural}/\widetilde{\op{Jac}}_n}$.

\begin{lem}
Condition (b) is equivalent to:
\begin{enumerate}[(b')]
	\item The endomorphism on $N_{\widetilde{\op{Jac}}{}_n^{\natural}/\widetilde{\op{Jac}}_n}$ induced by the Verschiebung is nilpotent.
\end{enumerate}
\end{lem}
\begin{proof}
The proof consists of two parts. In the first part, we prove that (b) is equivalent to the assertion that the endomorphism on the tangent bundle $T_{\widetilde{\op{Jac}}_n/W_n}|_{\widetilde{\op{Jac}}{}_n^{\natural}}$ induced from the Verschiebung is nilpotent. The latter endomorphism is nilpotent if and only if it is so on the special fiber $ T_{\widetilde{\op{Jac}}_1/\mathbb F_q}|_{\widetilde{\op{Jac}}{}_1^{\natural}}$. Using the group structure of $\widetilde{\op{Jac}}_1$ and the fact that $\widetilde{\op{Jac}}{}_1^{\natural}$ is zero-dimensional (c.f.~Lemma \ref{lem-univ-reg}), this condition translates to nilpotence of the Verschiebung on $ T_{\widetilde{\op{Jac}}_1/\mathbb F_q}|_e$, where $e$ is the identity element of $\widetilde{\op{Jac}}_1$. There is an exact sequence of $\mathbb F_q$-vector spaces:
$$
0 \rightarrow \op H^0(\Omega_{X/\mathbb F_q}) \rightarrow T_{\widetilde{\op{Jac}}_1/\mathbb F_q} |_e \xrightarrow{\pi} T_{\op{Jac}_1/\mathbb F_q} |_e \rightarrow 0.
$$
Note that the map induced by the Verschiebung is zero on $\op H^0(\Omega_{X/\mathbb F_q})$; on the other hand, we have a commutative diagram:
$$
\xysmall{
	T_{\op{Jac}_1/\mathbb F_q}|_e \ar[r]^-{\sim} \ar[d]^{\op{Ver}} & \op H^1(X, \cal O_X) \ar[d]^{\op{Fr}_X^*} \\
	T_{\op{Jac}_1/\mathbb F_q}|_e \ar[r]^-{\sim} & \op H^1(X, \cal O_X)
}
$$
This shows that the condition (b) is equivalent to the nilpotence of the Verschiebung endomorphism on $T_{\widetilde{\op{Jac}}_n/W_n}|_{\widetilde{\op{Jac}}{}_n^{\natural}}$. Next, we will construct an isomorphism between $N_{\widetilde{\op{Jac}}{}_n^{\natural}/\widetilde{\op{Jac}}_n}$ and the restriction of the cotangent bundle $\Omega_{\widetilde{\op{Jac}}_n/W_n}|_{\widetilde{\op{Jac}}{}_n^{\natural}}$ which intertwines the Verschiebung endomorphism. This will complete the proof of the Lemma.

For this purpose, we consider a more abstract setting: $S$ is a Noetherian base scheme, $X\rightarrow S$ a smooth scheme, $v : X\rightarrow X$ is an endomorphism, $X^v$ the derived fixed point subscheme:
\begin{equation}
\label{eq-derived-fixed-point-square}
\xysmall{
	X^v \ar[r]\ar[d] & X \ar[d]^{\Delta} \\
	X \ar[r]^-{(v,\op{id})} & X \underset{S}{\times} X 
}
\end{equation}
Suppose $X^v \rightarrow X$ is a regular closed immersion of classical schemes. (In our context, this hypothesis is verified in Lemma \ref{lem-univ-reg}.) Hence the cotangent complex $L_{X^v/X}$ is quasi-isomorphic to $N_{X^v/X}[1]$. On the other hand, since \eqref{eq-derived-fixed-point-square} is a derived fiber product, $L_{X^v/X}$ is identified with the restriction $L_{X/(X\underset{S}{\times}X)}|_{X^v}$ where the immersion $X\rightarrow X\underset{S}{\times} X$ is the diagonal map. The smoothness of $X\rightarrow S$ yields a canonical identification of $L_{X/(X\underset{S}{\times} X)}$ with $\Omega_{X/S}[1]$, so we find an isomorphism $N_{X^v/X} \xrightarrow{\sim} \Omega_{X/S}|_{X^v}$. It is straightforward to check that this isomorphism intertwines the endomorphisms induced from $v$.
\end{proof}

\subsubsection{}
Let us now inspect condition (a). By construction of the functor $\mathbb L_n$, it is fully faithful if and only if the functor \eqref{eq-geom-to-cat-funct}:
$$
\Coh(\widetilde{\op{Jac}}{}_n^{\natural})^{\op{id}} \rightarrow \Coh(\widetilde{\op{Jac}}_n)^{\op{Ver}_*}
$$
is fully faithful. Let us again consider a more abstract setting. Suppose $S$ is a Noetherian base scheme and $\iota : Y\hookrightarrow X$ is a closed immersion of $S$-schemes. Let $v : X\rightarrow X$ be an endomorphism which induces the identity map on $Y$ (i.e.,~$v\iota = \iota$). In this context, it is possible to consider the functor:
\begin{equation}
\label{eq-coh-from-fixed-subscheme}
\Coh(Y)^{\op{id}} \rightarrow \Coh(X)^{v_*},
\end{equation}
sending $(\cal F, \alpha)$ to the the coherent sheaf $\iota_*\cal F$ together with the isomorphism:
\begin{equation}
\label{eq-endomorphism-pushforward}
\iota_*\cal F \xrightarrow{\iota_*\alpha} \iota_*\cal F \xrightarrow{\sim} v_*\iota_*\cal F.
\end{equation}
This construction includes \eqref{eq-geom-to-cat-funct} as a special case. Therefore, Theorem \ref{thm-nilpotence} is a consequence of the following general result.

\begin{prop}
\label{prop-nilpotence-abstract}
Suppose $\iota : Y\hookrightarrow X$ is a regular closed immersion of Noetherian schemes and $Y$ is affine. Suppose $v : X\rightarrow X$ is an endomorphism which induces the identity map on $Y$. Then the following are equivalent:
\begin{enumerate}[(a)]
	\item the functor \eqref{eq-coh-from-fixed-subscheme} is fully faithful;
	\item the endomorphism on $N_{Y/X}$ induced by $v$ is nilpotent.
\end{enumerate}
\end{prop}

\noindent
The rest of this section is devoted to the proof of Proposition \ref{prop-nilpotence-abstract}. It will follow from an analysis of the Koszul resolution of $\iota_*\cal O_Y$.

\subsection{Proof of Proposition \ref{prop-nilpotence-abstract}}

\subsubsection{}
Throughout this subsection, we put ourselves in the context of Proposition \ref{prop-nilpotence-abstract}. Our first goal is to reformulate the fully faithfulness in more explicit terms. For the purpose of the proof, we need to consider lax-fixed points (see \S\ref{sec-laxed-fixed-points}). Let $(\cal F, \alpha)$ be an object in $\Coh(Y)^{\op{id}\rightarrow\op{id}}$ and $(\cal G, \beta)\in\Coh(Y)^{\op{id}}$. Their Hom-space in $\Coh(Y)^{\op{id}\rightarrow\op{id}}$ is calculated as the fiber of the map:
$$
\xysmall{
	 \op{Hom}_{\Coh(Y)}(\cal F, \cal G) \ar[d]^{\op{id} - \beta^{-1}\cdot(-) \cdot\alpha} \\
	 \op{Hom}_{\Coh(Y)}(\cal F, \cal G)
}
$$
(see Lemma \ref{lem-fixed-point-hom}). On the other hand, the Hom-space of their images in $\Coh(X)^{\op{id}\rightarrow v_*}$ is calculated as the fiber of the map:
\begin{equation}
\label{eq-hom-after-pushforward}
\xysmall{
	\op{Hom}_{\Coh(X)}(\iota_*\cal F, \iota_*\cal G) \ar[d]^{\op{id} - (\iota_*\beta)^{-1}\cdot v_*(-)\cdot \iota_*\alpha} \\
	\op{Hom}_{\Coh(X)}(\iota_*\cal F, \iota_*\cal G)
}
\end{equation}
Here, the notation $\iota_*\alpha$ (resp.~$\iota_*\beta$) is slightly abused to denote the composition \eqref{eq-endomorphism-pushforward}.

\subsubsection{}
It is possible to rewrite \eqref{eq-hom-after-pushforward} as an endomorphism of $\op{Hom}_{\Coh(Y)}(\iota^*\iota_*\cal F, \cal G)$ using the adjunction between $\iota^*$ and $\iota_*$.\footnote{Notation: $\iota^*$ is the \emph{derived} inverse image functor.} More precisely, for any $\cal F \in \Coh(Y)$, we define a canonical endomorphism on $\iota^*\iota_*\cal F$:
\begin{equation}
\label{eq-endomorphism-on-push-pull}
v_{\cal F} : \iota^*\iota_*\cal F \rightarrow \iota^*\iota_*\cal F,
\end{equation}
given by composing the canonical identification $\iota^*\iota_*\cal F \xrightarrow{\sim} \iota^*v_*\iota_*\cal F$ with the map:
$$
\iota^*v_*\cal E \rightarrow \iota^*\cal E,
$$
defined naturally for any quasi-coherent sheaf $\cal E$ on $X$ by adjunction and the equality $v\iota = \iota$, and which is then applied to $\cal E = \iota_*\cal F$. By a straightforward (but tedious) calculation, the morphism \eqref{eq-hom-after-pushforward} is isomorphic to:
$$
\xysmall{
	\op{Hom}_{\Coh(Y)}(\iota^*\iota_*\cal F, \cal G) \ar[d]^{\op{id} - \beta^{-1}\cdot (-)\cdot v_{\cal F}\cdot \iota^*\iota_*\alpha} \\
	\op{Hom}_{\Coh(Y)}(\iota^*\iota_*\cal F, \cal G)
}
$$

\subsubsection{}
In conclusion, the full faithfulness of \eqref{eq-coh-from-fixed-subscheme} is equivalent to the Cartesian-ness of the following commutative square:
\begin{equation}
\label{eq-fully-faithful-cartesian-adjoint}
\xysmall{
	\op{Hom}_{\Coh(Y)}(\cal F, \cal G) \ar[d]^{\op{id} - \beta^{-1}\cdot(-) \cdot\alpha}\ar[r] & \op{Hom}_{\Coh(Y)}(\iota^*\iota_*\cal F, \cal G) \ar[d]^{\op{id} - \beta^{-1}\cdot (-)\cdot v_{\cal F}\cdot \iota^*\iota_*\alpha} \\
	 \op{Hom}_{\Coh(Y)}(\cal F, \cal G) \ar[r] & \op{Hom}_{\Coh(Y)}(\iota^*\iota_*\cal F, \cal G)
}
\end{equation}
when $(\cal F, \alpha)$, $(\cal G, \beta)$ both belong to $\Coh(Y)^{\op{id}}$. We will use this formulation of condition (a) in our proof of Proposition \ref{prop-nilpotence-abstract}. Here is a roadmap of the proof to follow.
\begin{enumerate}[(a)]
	\item We will first calculate $v_{\cal F}$ for $\cal F = \cal O_Y$.
	\item Next, we will prove ``(a) $\Rightarrow$ (b)" using the assumption on the Cartesian-ness of \eqref{eq-fully-faithful-cartesian-adjoint} only for $(\cal F, \alpha) = (\cal O_Y, \op{id})$; this part of the proof will be completed in \S\ref{sec-cartesian-square-implies-nilpotence}.
	\item Finally, we will prove ``(b) $\Rightarrow$ (a)" by first establishing it for $\cal F$ being finite locally free (essentially using the calculation of $v_{\cal F}$ for $\cal F = \cal O_Y$) and then reduces to this case by taking ``free resolutions" of an arbitrary $(\cal F, \alpha)$; the proof will be completed in \S\ref{sec-nilpotence-implies-cartesian-square}.
\end{enumerate}

\begin{lem}
\label{lem-wedge-power-to-cohomology}
There is a canonical isomorphism of $\cal O_Y$-modules for each $i\ge 0$:
\begin{equation}
\label{eq-wedge-power-to-cohomology}
\bigwedge^i N_{Y/X} \xrightarrow{\sim} \op H^{-i}(\iota^*\iota_*\cal O_Y),
\end{equation}
which renders the following diagram commutative
\begin{equation}
\label{eq-wedge-power-to-cohomology-endomorphism}
\xysmall{
	\bigwedge^i N_{Y/X} \ar[d]^{\bigwedge^iv} \ar[r]^-{\cong} & \op H^{-i}(\iota^*\iota_*\cal O_Y) \ar[d]^{\op H^{-i}v_{\cal O_Y}} \\
	\bigwedge^i N_{Y/X} \ar[r]^-{\cong} & \op H^{-i}(\iota^*\iota_*\cal O_Y)
}
\end{equation}
\end{lem}
\begin{proof}
Let $\cal I$ denote the ideal sheaf of $Y$. The exact sequence:
\begin{equation}
\label{eq-ideal-exact-sequence}
0 \rightarrow \cal I \rightarrow \cal O_X \rightarrow \iota_*\cal O_Y \rightarrow 0
\end{equation}
induces an isomorphism between $\op H^{-1}\iota^*\iota_*\cal O_Y$ and $\op H^0\iota^*\cal I$, which is by definition $N_{Y/X}$. In other words, we have found the isomorphism $N_{Y/X}\xrightarrow{\sim} \op H^{-1}(\iota^*\iota_*\cal O_Y)$. This map renders \eqref{eq-wedge-power-to-cohomology-endomorphism} commutative for $i=1$. The maps \eqref{eq-wedge-power-to-cohomology} for $i\neq 1$ are constructed using the graded commutative algebra structure on $\op H^{\bullet}(\iota^*\iota_*\cal O_Y)$. The fact that they define isomorphisms making \eqref{eq-wedge-power-to-cohomology-endomorphism} commutative follows from the Koszul resolution of $\iota_*\cal O_Y$ as an $\cal O_X$-algebra.
\end{proof}

\subsubsection{}
We will now turn to a key step in the proof of Proposition \ref{prop-nilpotence-abstract}. Namely, we will show that the fully faithfulness statement applied to $(\cal F, \alpha) = (\cal O_Y, \op{id})$ implies the nilpotence of the $v$-action on $N_{Y/X}$. We begin with a commutative algebra fact.

\begin{lem}
\label{lem-commutative-algebra-nilpotence}
Suppose $R$ is a Noetherian ring and $\cal M$ is a finite locally free $R$-module equipped with an endomorphism $v$. If for all finite $R$-module $\cal N$ together with an automorphism $\gamma$, the endomorphism:
$$
\op{id} - \gamma \otimes v : \cal N \otimes \cal M \rightarrow \cal N\otimes\cal M
$$
is invertible, then $v$ is nilpotent.
\end{lem}
\begin{proof}
Without loss of generality, we may assume that $\Spec(R)$ is connected and $\cal M$ has constant rank $r$. We first show that $v$ is \emph{not} invertible. Suppose the contrary. Let $\cal N := \cal M^{\vee}$ together with the automorphism $\gamma := (v^{-1})^{\vee}$. We have a commutative diagram:
$$
\xysmall{
	\cal M^{\vee} \otimes \cal M \ar[r]\ar[d]^{\op{id} - (v^{-1})^{\vee}\otimes v} & \op{End}(\cal M) \ar[d]^{\op{id} - v(-)v^{-1}} \\
	\cal M^{\vee} \otimes \cal M \ar[r] & \op{End}(\cal M)
}
$$
where the right vertical arrow annihilates $\op{id}_{\cal M}$. Contradiction. We will now prove that $v$ is nilpotent for increasingly general rings $R$.

First, we suppose that $R$ is a field. In this case, $\cal M$ is a finite-dimensional $R$-vector space. Hence $v$ is not injective. Let $\cal M_1\neq 0$ be its kernel and $\cal Q_1 := \cal M/\cal M_1$ be the quotient. The endomorphism $v$ induces an endomorphism $v_1$ on $\cal Q_1$. For any $(\cal N, \gamma)$ as in the statement of the Lemma, we have a commutative diagram with exact rows:
	$$
	\xysmall{
		\cal N \otimes \cal M_1 \ar[r]\ar[d]^{\op{id}} & \cal N\otimes \cal M \ar[r]\ar[d]^{\op{id} - \gamma\otimes v} & \cal N\otimes\cal Q_1 \ar[r]\ar[d]^{\op{id} - \gamma \otimes v_1} & 0 \\
		\cal N \otimes \cal M_1 \ar[r] & \cal N\otimes \cal M \ar[r] & \cal N \otimes\cal Q_1 \ar[r] & 0
	}
	$$
	Since the middle arrow is invertible, so is the right arrow. Hence $(\cal Q_1, v_1)$ again satisfies the hypothesis of the Lemma. This shows that we may inductively build a filtration:
	$$
	0 \subset \cal M_1 \subset \cal M_2 \subset \cdots \subset \cal M,
	$$
	where $v(\cal M_i) \subset \cal M_{i-1}$. Since $\cal M$ has finite dimension, this filtration terminates at finite step and we conclude that $v$ is nilpotent.
	
Now, we suppose that $R$ is reduced and Noetherian. For any $\fr p \in \Spec(R)$, the base change of $(\cal M, v)$ to $\kappa(\fr p)$ satisfies the hypothesis of the Lemma (with $R$ replaced by $\kappa(\fr p)$). Hence the previous step shows that $v^r$ reduces to zero at each fiber $R\rightarrow \kappa(\fr p)$ (where $r$ is the rank of $\cal M$). Let $\cal M'$ denote the cokernel of the map
	$$
	\cal M \xrightarrow{v^r} \cal M \rightarrow \cal M' \rightarrow 0.
	$$
	Then $\cal M'$ has constant fibers of dimension $r$. Since $R$ is reduced, it follows that $\cal M'$ is locally free of rank $r$. The surjection $\cal M \rightarrow \cal M'$ must therefore be an isomorphism. Hence $v^r = 0$.
	
Finally, we suppose that $R$ is any Noetherian ring. The previous step shows that the endomorphism $v^r : \cal M \rightarrow \cal M$ factors through $\fr n \cal M$, where $\fr n$ is the nilradical of $R$. Since $R$ is Noetherian, $\fr n$ is nilpotent. Hence we must have $v^{r'}(\cal M) = 0$ for some integer $r' \ge r$.
\end{proof}

\subsubsection{}
\label{sec-cartesian-square-implies-nilpotence}
The implication ``(a) $\Rightarrow$ (b)" in Proposition \ref{prop-nilpotence-abstract} is a consequence of the following.

\begin{lem}
\label{lem-cartesian-square-implies-nilpotence}
If for every object $(\cal G, \beta) \in \Coh(Y)^{\op{id}}$, the commutative square \eqref{eq-fully-faithful-cartesian-adjoint} for $(\cal F, \alpha) = (\cal O_Y, \op{id})$ is Cartesian, then the $v$-action on $N_{Y/X}$ is nilpotent.
\end{lem}
\begin{proof}
Since $\cal O_Y$ is identified with $\op H^0\iota^*\iota_*\cal O_Y$ via the co-unit map, the assumption implies (is in fact equivalent to) that the endomorphism $\op{id} - \beta^{-1}\cdot(-)\cdot\tau^{\le -1}v_{\cal O_Y}$ acting on the space $\op{Hom}_{\Coh(Y)}(\tau^{\le -1}\iota^*\iota_*\cal O_Y, \cal G)$ is an equivalence. Taking $\cal G$ to be any coherent sheaf $\cal N$ concentrated in cohomological degree $-1$, this assertion implies that:
$$
\op{id} - \beta^{-1}\cdot(-)\cdot v : \op{Hom}_{\Coh(Y)^{\heartsuit}}(N_{Y/X}, \cal N) \rightarrow \op{Hom}_{\Coh(Y)^{\heartsuit}}(N_{Y/X}, \cal N)
$$
is bijective; here we have used Lemma \ref{lem-wedge-power-to-cohomology} to identify $\op H^{-1}\iota^*\iota_*\cal O_Y$ with $N_{Y/X}$ in a way compatible with the $v$-action. Since:
$$
\op{Hom}_{\Coh(Y)^{\heartsuit}}(N_{Y/X}, \cal N) \xrightarrow{\sim} \cal N\otimes N_{Y/X}^{\vee},
$$
we may apply Lemma \ref{lem-commutative-algebra-nilpotence} to $(\cal M, v) = (N^{\vee}_{Y/X}, v^{\vee})$ to conclude.
\end{proof}

\subsubsection{}
\label{sec-nilpotence-implies-cartesian-square}
We now turn to the ``(b) $\Rightarrow$ (a)" direction in Proposition \ref{prop-nilpotence-abstract}. Namely, we will assume that the endomorphism on $N_{Y/X}$ induced from $v$ is nilpotent. We will prove a stronger statement: the commutative square \eqref{eq-fully-faithful-cartesian-adjoint} is Cartesian for all $(\cal F, \alpha)\in\Coh(Y)^{\op{id}\rightarrow\op{id}}$ and $(\cal G, \beta)\in\Coh(Y)^{\op{id}}$.

\begin{lem}
\label{lem-nilpotence-implies-fully-faithfulness-free}
Under the assumptions of \S\ref{sec-nilpotence-implies-cartesian-square}, the commutative square \eqref{eq-fully-faithful-cartesian-adjoint} is Cartesian if $\cal F$ is a finite locally free $\cal O_Y$-module.
\end{lem}
\begin{proof}
Since the problem is local on $Y$, we may assume that $\cal F$ is a free $\cal O_Y$-module of rank $k$. Because $\cal O_Y$ is identified with $\op H^0\iota^*\iota_*\cal O_Y$ via the co-unit map, it suffices to show that the endomorphism:
\begin{equation}
\label{eq-endomorphism-truncated}
\xysmall{
	\op{Hom}_{\Coh(Y)}(\tau^{\le -1}\iota^*\iota_*\cal F, \cal G)\ar[d]^{\op{id} - \beta^{-1}\cdot(-)\cdot \tau^{\le -1}v_{\cal F}\cdot\tau^{\le -1}\iota^*\iota_*\alpha} \\
	\op{Hom}_{\Coh(Y)}(\tau^{\le -1}\iota^*\iota_*\cal F, \cal G)
}
\end{equation}
is an isomorphism. Since $\iota^*\iota_*\cal F$ is cohomological bounded, via cohomological truncations, it suffices to prove that for each $i\ge 1$, the following map:
\begin{equation}
\label{eq-endomorphism-truncated-single-degree}
\xysmall{
	\op{Hom}_{\Coh(Y)}(\op H^{-i}(\iota^*\iota_*\cal F), \cal G)\ar[d]^{\op{id} - \beta^{-1}\cdot(-)\cdot \op H^{-i}v_{\cal F}\cdot \op H^{-i}\iota^*\iota_*\alpha} \\
	\op{Hom}_{\Coh(Y)}(\op H^{-i}(\iota^*\iota_*\cal F), \cal G)
}
\end{equation}
is invertible. Since $v_{\cal F}$ is functorially assigned to $\cal F$, the endomorphisms $v_{\cal F}$ and $\iota^*\iota_*\alpha$ commute. Thus, so do their truncated versions. It follows that the $n$-fold composition of $\beta^{-1}\cdot(-)\cdot \op H^{-i}v_{\cal F}\cdot \op H^{-i}\iota^*\iota_*\alpha$ is given by:
$$
\beta^{-n}\cdot (-)\cdot (\op H^{-i}v_{\cal F})^n \cdot (\op H^{-i}\iota^*\iota_*\alpha)^n.
$$
Since $\op H^{-i}v_{\cal F}$ is identified with $\op H^{-i} v_{\cal O_Y}^{\oplus k}$ acting on $\op H^{-i}\iota^*\iota_*\cal O_Y^{\oplus k}$, the hypothesis on nilpotence and Lemma \eqref{lem-wedge-power-to-cohomology} shows that $(\op H^{-i}v_{\cal F})^n$ (for $i\ge 1$) vanishes for $n$ sufficiently large. This implies that \eqref{eq-endomorphism-truncated-single-degree} is invertible.
\end{proof}

\subsubsection{}
Finally, we will reduce the general statement to the case where $\cal F$ is a finite locally free $\cal O_Y$-module. For this reduction, it is convenient to introduce the cocomplete $\infty$-category:
$$
\QCoh(Y)^{\op{id}\rightarrow\op{id}}_{\op{loc.fin}}\subset \QCoh(Y)^{\op{id}\rightarrow\op{id}},
$$
consisting of objects $(\cal M, \beta)$ such that for each $i$, the action of $\op H^i(\beta)$ on $\op H^i(\cal M)$ is \emph{locally finite}, i.e., $\op H^i(\cal M)$ is a union of finite submodules stable under $\op H^i(\beta)$. It is clear that the full subcategory $\Coh(Y)^{\op{id}\rightarrow\op{id}} \subset \QCoh(Y)^{\op{id}\rightarrow\op{id}}$ is contained in the ``locally finite" subcategory.

\begin{lem}
The stable $\infty$-category $\QCoh(Y)_{\op{loc.fin}}^{\op{id}\rightarrow\op{id}}$ is generated under colimits by objects of the form $(\cal F, \alpha)$ where $\cal F$ is a finite locally free $\cal O_Y$-module up to cohomological shift.
\end{lem}
\begin{proof}
It suffices to show that every nonzero object $(\cal M, \beta)$ of $\QCoh(Y)^{\op{id} \rightarrow \op{id}}_{\op{loc.fin}}$ receives a nonzero map from some $(\cal F, \alpha)$ as in the statement of the Lemma. Suppose $\op H^i(\cal M) \neq 0$ for some $i$. We will construct such a pair $(\cal F, \alpha)$ equipped with a map to $(\tau^{\le i}\cal M, \tau^{\le i}\beta)$ such that the induced map on $\op H^i$ is nonzero. The $\infty$-category $\QCoh(Y)^{<\infty}$ is the differential graded nerve of bounded above complexes of projective modules (c.f.~\cite[1.3.2.7]{Lu17}). Hence we may assume that $\tau^{\le i}\cal M$ is such a complex and $\beta$ is represented by a chain map. Hence $(\tau^{\le i}\cal M, \tau^{\le i}\beta)$ is of the form:
$$
\xysmall{
	\cdots \ar[r] & M^{i-2} \ar[r]^{\delta^{i-2}} \ar[d]^{\beta^{i-2}} & \cal M^{i-1} \ar[r]^{\delta^{i-1}} \ar[d]^{\beta^{i-1}} & \op{Ker}(\delta^i) \ar[r]\ar[d]^{\beta^i} & 0 \ar[r]\ar[d] & \cdots \\
	\cdots \ar[r] & M^{i-2} \ar[r]^{\delta^{i-2}} & \cal M^{i-1} \ar[r]^{\delta^{i-1}} & \op{Ker}(\delta^i) \ar[r] & 0 \ar[r] & \cdots
}
$$
where each $\cal M^j$ is projective. Since the action of $\op H^i(\beta)$ on $\op H^i(\cal M)$ is locally finite, we can find a finite submodule $\cal N \subset \op H^i(\cal M)$ stable under $\op H^i(\beta)$. Then there exists a surjection from some finite locally free $\cal O_Y$-module $f : \cal F \twoheadrightarrow \cal N$, and an endomorphism $\alpha$ of $\cal F$ making the following diagram commute:
$$
\xysmall{
	\cal F \ar[d]^{\alpha}\ar@{->>}[r]^-f & \cal N \ar[d]^{\op H^i(\beta)}  \\
	\cal F \ar@{->>}[r]^-f & \cal N 
}
$$
Choose any lift of $f$ to a map $\tilde f : \cal F \rightarrow \op{Ker}(\delta^i)$. Then $\tilde f$ defines a chain map $\cal F[-i] \rightarrow \tau^{\le i}\cal M$. Furthermore, the two maps $\beta^i\tilde f$ and $\tilde f\alpha$ from $\cal F$ to $\op{Ker}(\delta^i)$ differ by a morphism $\cal F \rightarrow \op{Im}(\delta^{i-1})$, which can be lifted to a map $h : \cal F \rightarrow \cal M^{i-1}$. The map $h$ witnesses the commutativity of the following diagram in $\QCoh(Y)^{<\infty}$:
$$
\xysmall{
	\cal F[-i] \ar[d]^{{\alpha[-i]}}\ar[r] & \tau^{\le i}\cal M \ar[d]^{\tau^{\le i}\beta} \\
	\cal F[-i] \ar[r] & \tau^{\le i}\cal M
}
$$
Hence we find a morphism from $(\cal F, \alpha)[-i]$ to $(\tau^{\le i}\cal M, \tau^{\le i}\beta)$ in $\QCoh(Y)^{\op{id}\rightarrow\op{id}}_{\op{loc.fin}}$ which induces a nonzero map on $\op H^i$.
\end{proof}

\subsubsection{}
To prove ``(b) $\Rightarrow$ (a)" in Proposition \ref{prop-nilpotence-abstract}, one observes that the commutative square associated to $(\cal F, \alpha)\in\QCoh(Y)^{\op{id}\rightarrow\op{id}}_{\op{loc.fin}}$ and $(\cal G, \beta)\in\Coh(Y)^{\op{id}}$:
$$
\xysmall{
	\op{Hom}_{\QCoh(Y)}(\cal F, \cal G) \ar[d]^{\op{id} - \beta^{-1}\cdot(-) \cdot\alpha}\ar[r] & \op{Hom}_{\QCoh(Y)}(\iota^*\iota_*\cal F, \cal G) \ar[d]^{\op{id} - \beta^{-1}\cdot (-)\cdot v_{\cal F}\cdot \iota^*\iota_*\alpha} \\
	 \op{Hom}_{\QCoh(Y)}(\cal F, \cal G) \ar[r] & \op{Hom}_{\QCoh(Y)}(\iota^*\iota_*\cal F, \cal G)
}
$$
defined analogously to \eqref{eq-fully-faithful-cartesian-adjoint}, takes colimits in the object $(\cal F, \alpha) \in \QCoh(Y)^{\op{id} \rightarrow \op{id}}_{\op{loc.fin}}$ to limits of commutative squares. Hence the problem reduces to the case where $\cal F$ is a finite locally free $\cal O_Y$-module up to cohomological shift, where it follows from Lemma \ref{lem-nilpotence-implies-fully-faithfulness-free}.

\bigskip

\end{document}